\setlist[enumerate]{itemsep=0.5pt, topsep=4pt}
\newcommand{\lv}{\vspace{0.3cm}}
\newcommand{\vv}{\vspace{0.5cm}}
\newcommand{\vvv}{\vspace{1cm}}
\newcommand{\ddt}{\left. \frac{d}{dt} \right|_{t=0}}
\newcommand{\R}{\mathbb{R}}
\newcommand{\N}{\mathbb{N}}
\newcommand{\Z}{\mathbb{Z}}
\newcommand{\ds}{\displaystyle}
\newcommand{\dd}[1]{\frac{\partial}{\partial #1}}
\newcommand{\norm}[1]{\left\lVert#1\right\rVert}
\newcommand{\nb}{\underline{n}}
\newcommand{\normmm}[4]{\left\lVert#1\right\rVert_{(#2 \to  #3),#4}}
\newcommand{\normmC}[4]{ \left\lVert#1\right\rVert_{C, (#2 \to  #3),#4}}
\newcommand{\normmH}[4]{ \left\lVert#1\right\rVert_{H, (#2 \to  #3),#4}}
\newcommand{\normmmC}[3]{ \left\lVert#1\right\rVert_{C, #2,#3}}
\newcommand{\normmmH}[3]{ \left\lVert#1\right\rVert_{H, #2,#3}}
\newcommand{\gsc}{\mathfrak{g_{sc}}}
\newcommand{\trksc}{trK_{\mathfrak{g_{sc}}}}
\newcommand{\trko}{trK_{\mathfrak{B}}}
\newcommand{\gammasc}{\gamma_{\mathfrak{g_{sc}}}}
\newcommand{\gammao}{\gamma_{\mathfrak{B}}}
\newcommand{\HtoH}[3]{H^{#1}_{#2}\left([nm_0,\infty); H^{#3}(S^2)\right)}
\newcommand{\HtoL}[2]{H^{#1}_{#2}\left([nm_0,\infty); L^{2}(S^2)\right)}
\newcommand{\LtoH}[2]{L^{2}_{#1}\left([nm_0,\infty); H^{#2}(S^2)\right)}
\newcommand{\CtoH}[3]{C^{#1}_{#2}\left([nm_0,\infty); H^{#3}(S^2)\right)}
\newcommand{\LtoHr}[2]{L^{2}_{#1}\left([r_0,\infty); H^{#2}(S^2)\right)}
\newcommand{\CtoHr}[3]{C^{#1}_{#2}\left([r_0,\infty); H^{#3}(S^2)\right)}
\newcommand{\HtoHH}[3]{H^{#1}_{#2}\left([nm_0,\infty); \mathcal{H}^{#3}(S^2)\right)}
\newcommand{\LtoHH}[2]{L^{2}_{#1}\left([nm_0,\infty); \mathcal{H}^{#2}(S^2)\right)}
\newcommand{\conflie}[1]{\mathcal{L}_{#1,conf}}
\newcommand{\AH}[3]{{\mathcal{A}_{H}}^{(#1,#2)}_{#3}}
\newcommand{\AC}[3]{{\mathcal{A}_{C}}^{(#1,#2)}_{#3}}
\newcommand{\AHC}[3]{\mathcal{A}^{(#1,#2)}_{#3}}
\numberwithin{equation}{section}
\newtheorem{thm}{Theorem}[section]
\newtheorem{lem}[thm]{Lemma}
\newtheorem{prop}[thm]{Proposition} 
\newtheorem*{prop*}{Proposition}
\newtheorem {cor}[thm]{Corollary}  
\newtheorem*{question*}{Question}
\newtheorem*{mainthm}{Main Theorem}{\bf}{\it}
\newtheorem*{redthm}{Reduction Theorem}{\bf}{\it}
\theoremstyle{definition}
\newtheorem{defn}[thm]{Definition}
\newtheorem{definition}{Definition}
\theoremstyle{remark}
\newtheorem{remark}[thm]{Remark}
\title{Local Well-posedness of the Bartnik Static Extension Problem near Schwarzschild spheres}
\author{Ahmed Ellithy\thanks{Department of Mathematics, University of Toronto. Email: \texttt{ahmed.ellithy@mail.utoronto.ca}}}
\date{}
\begin{document}

\maketitle
\begin{abstract}
 \noindent We establish the local well-posedness of the Bartnik static metric extension problem for arbitrary Bartnik data that perturb that of any sphere in a Schwarzschild $\{t=0\}$ slice. Our result in particular includes spheres with arbitrary small mean curvature. We introduce a new framework to this extension problem by formulating the governing equations in a geodesic gauge, which reduce to a coupled system of elliptic and transport equations. Since standard function spaces for elliptic PDEs are unsuitable for transport equations, we use certain spaces of Bochner-measurable functions traditionally used to study evolution equations. In the process, we establish existence and uniqueness results for elliptic boundary value problems in such spaces in which the elliptic equations are treated as evolutionary equations, and solvability is demonstrated using rigorous energy estimates. The precise nature of the expected difficulty of solving the Bartnik extension problem when the mean curvature is very small is identified and suitably treated in our analysis. 
 
\end{abstract}
\tableofcontents

\section{Introduction}

We consider the Bartnik static metric  extension problem, which 
originates in the Bartnik mass-minimization 
problem, \cite{Bartnik2, Bartnik3}: In the latter, one considers a topological $3$-ball 
$(B, g)$ 
equipped with a Riemannian 3-metric 
of positive scalar curvature. A natural example of such a metric arises on 
any compact  space-like maximal hypersurface $\Omega$ (with boundary) 
in a $(3+1)$-spacetime $(M, g)$ 
that satisfies the dominant energy condition, where $g$ is the restriction of the 
space-time metric $g$ to $\Omega$.

One wishes to assign a notion of mass to $(\Omega, g)$; in fact ideally, \cite{Bartnik3},  
the notion of mass should depend just on the restriction of $g$ to $\partial\Omega$, 
the second fundamental form of $\Omega\subset M$ at 
$\partial\Omega$, 
as well as the second fundamental form of $\partial\Omega$ inside $\Omega$. 

Bartnik's definition, see \cite{Bartnik3, corvino2}, considers such data on 
$\partial\Omega$ and associates to it a class $\mathcal{PM}$ 
of admissible asymptotically flat extensions $(M_{\rm ext}, g_{\rm ext})$,
and seeks to minimize the ADM
mass among all such extensions. There are many possibilities on how to define
the space $\mathcal PM$ of extensions, see \cite{Bartnik3}. The most ``minimal''  
requirements are that the 3-metrics $g_{\mathrm ext}\in\mathcal PM$ 
should be of positive scalar curvature, the metrics on $\partial M_{\rm ext}$ induced from the two sides
$\Omega$ (interior side) and   $M_{\rm ext}$ (exterior side) should \emph{match}:  
$g_{\rm ext}|_{\partial M_{\rm ext}}= g|_{\partial\Omega}$;  moreover, the mean curvature 
$H_{\rm ext}$ of $\partial M_{\rm ext}$ in $M_{\rm ext}$ should agree with the mean curvature 
$H_{\rm int}$
of $\partial M_{\rm ext}=\partial\Omega$ with respect to the interior metric  $g$ over $\Omega$.
Additional requirements, such as the non-existence of closed minimal surfaces in
the extension  $(M_{\rm ext}, g_{\rm ext})$
are very natural (see \cite{Bartnik3}) and are 
also 
frequently imposed. Once the class of admissible extensions has been chosen the Bartnik Mass
is defined to be the infimum of the ADM masses, among all admissible extensions.

 
 An important feature of the Bartnik 
 mass 
 is the result of Corvino, \cite{corvino1, corvino2} that  if 
this infimum is attained for some (asymptotically flat) 
metric $\mathfrak{g}$, on a manifold $M_{\rm ext}$ with 
$\partial M_{\rm ext}= \partial \Omega$,
then this extension $\mathfrak{g}$  
must satisfy the system of equations: 
  
  \begin{equation}
\label{static equations}    
\Delta_{\mathfrak{g}} f=0, {\quad}
\operatorname{Ric}_{\mathfrak{g}}=f^{-1}\operatorname{Hess}_{\mathfrak{g}}(f), 
  \end{equation}
  as well as the two imposed requirements 
  \begin{equation}
      \label{bdry condns}
      \mathfrak{g}|_{\partial M_{\rm ext}}= g|_{\partial\Omega}, {\quad} H_{\rm ext}(\mathfrak{g})= 
      H_{\rm int}.   
  \end{equation}
A solution to the system \eqref{static equations} implies that the metric 
${\bf g}=-f^2 dt^2+\mathfrak{g}$ on 
$M_{\rm ext}\times\mathbb{R}$ would satisfy the Einstein Vacuum
equations, and also be static, in the sense that
${\mathcal L}_{\partial_t}{\bf g}=0$. 
\vv

\begin{remark}
 We note further that if the Bartnik minimizer exists, it is known--see
 \cite{Bartnik3}--that the 
metric $g_{\rm total}$ defined over $M_{\rm total}= \Omega\bigcup M_{\rm ext}$ by joining $g$ with $\mathfrak{g}$ 
across $\partial M_{\rm ext}$
is generically expected to be merely Lipschitz 
across
the joining boundary $\partial M_{\rm ext}$: 
The traceless parts $\hat{K}|_{g}, \hat{K}|_{\mathfrak{g}}$ of the second fundamental 
forms  ${K}|_{g}, {K}|_{\mathfrak{g}}$
induced on  $\partial\Omega=\partial M_{\rm ext}$ from the two sides 
$(\Omega,g), (M_{\rm ext},\mathfrak{g})$ are 
generically expected to not match.
\end{remark}

\begin{remark}
In this paper, we define the mean curvature of a surface in a 3-manifold as half the trace of the second fundamental form of the surface. In particular, the mean curvature of the round unit sphere in $\R^3$ is $1$. 
\end{remark}

In view of the result of \cite{corvino1}
the question of the attainment of the Bartnik mass 
leads to the Bartnik static extension problem with data supported on a 2-sphere: 

\begin{question*}[Bartnik static metric extension problem]\label{def stat ext}
Consider a Riemannian 2-sphere $(S^2, \gamma)$  equipped with a function $H$
over $S^2$. We consider a (topological) manifold 
$M=\mathbb{R}^3\setminus B$ and  seek an asymptotically flat metric $\mathfrak{g}$
over $M$ which satisfies:
\begin{itemize}
\item $\mathfrak{g}|_{\partial M}=\gamma$, 
\item the mean curvature $H_{\rm ext}$  of 
$\partial M$ relative to $\mathfrak{g}$ equals $H$, and: 
\item There exists a 
positive function $f$ over $M$ with $f(x)\to 1 $ as $|x|\to \infty$ on $M$ so that the pair $(\mathfrak{g},f)$ satisfy the system of equations \eqref{static equations}. 
\end{itemize}
\end{question*}

\begin{definition}
 The system in \eqref{static equations} will be called the \emph{static vacuum equations}. The pair of prescribed data over $S^2$ (the metric $\gamma$ and the 
putative mean curvature function $H$) will be called 
\emph{Bartnik data}. A solution $(\mathfrak{g},f)$ to \eqref{static equations} to this prescribed data with $\mathfrak{g}$ being asymptotically
flat and $f$ going to $1$ at infinity will be called a \emph{static vacuum extension} with Bartnik data $(\gamma, H)$.  
\end{definition}

\vv

Two important examples of static vacuum extensions are: 

\begin{enumerate}
\item The Euclidean solution $(g_{euc}, 1)$ on $\R^3\setminus B_1$ with Bartnik data $(\gamma_{\mathbb{S}^2}, 1)$, where $\gamma_{\mathbb{S}^2}$ is the round metric on $S^2$.  
\item The Riemannian Schwarzschild solution $(\gsc, f_{sc})$ with mass $m_0$ on $\R^3 \setminus B_{r_0}$ and Bartnik data $(r_0^2 \gamma_{\mathbb{S}^2}, \frac{f_{sc}(r_0)}{r_0})$, where $r_0>2m_0$ and 
\[\gsc = f_{sc}^{-2}dr^2+ r^2 \gamma_{\mathbb{S}^2}, \qquad f_{sc} = \sqrt{1-\frac{2m_0}{r}} \]
\end{enumerate}


\vv

Since the static vacuum equations are highly nonlinear, one first hopes to achieve a local well-posedness result near arbitrary solutions. In fact, Anderson and Khuri in \cite{anderson-khuri} prove, by means of counterexamples, that global well-posedness does not hold (see also \cite{anderson2023}) . Nonetheless, there has been significant progress on establishing local well-posedness results. Miao in \cite{miao} confirmed that the extension problem is locally well-posed near Euclidean Bartnik data on the unit sphere under a triple reflectional symmetry assumption. This symmetry assumption was later removed by Anderson in \cite{anderson-local-exist} with the result generalized by Huang and An in \cite{an-huang} and \cite{an-huang3} for a large range of connected embedded surfaces in Euclidean $\R^3$. 

\vv

Subsequently, Huang and An in \cite{an-huang2} introduced a general criterion for local well-posedness near a given solution, which hinges on the triviality of the kernel of a particular operator. They identified a class of static vacuum extensions they call “static regular”, characterized by the linearized operator having a trivial kernel, as sufficient conditions for local well-posedness. They showed that static regularity is, in some sense, generic for smooth hyper surfaces which are already inside a static vacuum extension. However since this relies in a very essential way on the data already lying in the interior of a given solution (which must be analytic), this result does not guarantee genericity in any sense in the space of smooth Bartnik data. Their findings in particular implies that for any given $m_0>0$ and $\epsilon>0$, the set of radii $r_0\geq 2m_0+\epsilon$ for which the Schwarzschild manifold $\R^3 \setminus B_{r_0}$ with mass $m_0$ is static regular forms an open dense subset of $[2m_0+\epsilon, \infty)$.

\vv

\subsubsection*{Summary of Main Results}

Our main result in this paper is establishing local well-posedness for perturbations of {\textit every} Schwarzschild solution, hence strengthening Huang and An's result in \cite{an-huang2}. We present a new approach to this problem that can be applied to similar extension problems. In this approach, we write the putative solution $(\mathfrak{g}, f)$ with respect to a geodesic gauge, which was not used before for this problem. One benefit of this gauge is that the connection coefficients of the desired solution $\mathfrak{g}$ can be 
linked to $f$ by \emph{ordinary} differential equations, where $f$ provides 
the forcing terms. More precisely, we will be considering the metrics $g:=f^2\cdot \mathfrak{g}$, whose Ricci curvature must then satisfy: ${\mathrm Ric}_{ij}(g)= 2\nabla_i u\otimes \nabla_j u$ (with $u=\ln f$), and we will reduce the extension problem in \eqref{static equations} to an elliptic equation on $u$ coupled with Riccatti equations on the second fundamental form of $g$, with $u$ providing the forcing term, and constraint equations on the boundary coming from the contracted Gauss and Codazzi equations. We rigorously establish estimates for the linearized operator and its inverse, showing that the linearization of the reduced equations is an isomorphism on appropriate Banach spaces. We then invoke the implicit function theorem on Banach manifolds to conclude local well-posedness.

\vv

An interesting remark concerns the expected difficulty of solving Bartnik's extension problem when the Schwarzschild sphere is very close to the horizon, in which the mean curvature is positive but very small. This difficulty is anticipated by the black hole uniqueness theorem (see \cite{blackholeunique}), which in particular implies the following: for surfaces with zero mean curvature, the existence of static vacuum extensions fails unless the surface is a round sphere, in which case the Schwarzschild exteriors are the \textit{only} possible extensions. Therefore, one expects that the space of allowed perturbations of the Schwarzschild spheres $S_r$ must be shrinking as $r$ goes to $2m_0$. This also suggests that solving the linearized problem should be progressively harder as $r \to 2m_0$. We do capture this difficulty in our analysis and resolve it (see proposition \ref{prop-nonlocalpde-unique}), showing the solvability of Bartnik's extension problem near all spheres $S_r$, $r>2m_0$. 

\vv

The choice of gauge influences which Banach spaces are most appropriate to use. Due to our choice of gauge, the equations in \eqref{static equations} reduce to an elliptic PDE coupled with transport equations. Consequently, the standard spaces used for elliptic PDEs, such as weighted Sobolev and Hölder spaces, are not appropriate as they do not provide the correct setting to solve transport equations. Instead, we use spaces of Bochner-measurable functions that are traditionally used as the setting to study hyperbolic and parabolic PDEs (see \cite{evans}). More specifically, the spaces we use for $u$ are  $\AC{2}{k}{\delta}(M)$ and $\AH{2}{k}{\delta}(M)$ defined by (see definition \ref{spaces for u})

\begin{equation*} u \in \AH{2}{k}{\delta}(M) \quad \Longleftrightarrow \quad \begin{cases} \begin{aligned}  &u \in \LtoHr{\delta}{k}\\&\partial_r u \in \LtoHr{\delta-1}{k-1} \\&\partial_r^2 u \in \LtoHr{\delta-2}{k-2} \end{aligned} \end{cases} \end{equation*}
\begin{equation*} u \in \AC{2}{k}{\delta}(M) \quad \Longleftrightarrow \quad \begin{cases} \begin{aligned}  &u \in \CtoHr{0}{\delta}{k}\\&\partial_r u \in \CtoHr{0}{\delta-1}{k-1} \\&\partial_r^2 u \in \CtoHr{0}{\delta-2}{k-2} \end{aligned} \end{cases} \end{equation*}
where $r_0>0$, $k\geq 2$, and $\delta \in (-1,-\frac{1}{2})$ is a weight introduced appropriately in the norms of the above spaces to control the decay at infinity. These spaces are not traditionally used to study elliptic PDEs. In this paper, we establish solvability of a certain elliptic problem in the above spaces. More specifically, defining the operator $Q: u \mapsto (\Delta_g u, u|_{\partial M})$ with respect to a certain asymptotically flat metric $g$ on $M = \R^3 \setminus B_{r_0}$, we demonstrate that (see chapter \ref{elliptic-chapter} )

\[Q:\AH{2}{k}{\delta}(M) \to \LtoHr{\delta-2}{k-2}  \times H^{k-1/2}(\partial M) \qquad \text{is an isomorphism}\]
\[Q:\AC{2}{k}{\delta}(M) \to \CtoHr{0}{\delta-2}{k-2}  \times H^{k}(\partial M) \qquad \text{is an isomorphism} \]

This result can be generalized to arbitrary asymptotically flat metrics and more general elliptic boundary value problems, thereby establishing the solvability of such problems in the above spaces. 

\vv

An interesting comparison we can make to the above is the study of elliptic boundary value problems in $C^k$ spaces. It is well known that there is no general existence theorem for elliptic boundary value problems in $C^k(M)$ (see \cite{trudinger} problem 4.9 for a counterexample). In \cite{trudinger}, the authors demonstrate via the celebrated Schauder and Calderon-Zygmund estimates that Hölder spaces $C^{k,\alpha}(\Omega)$ and Sobolev spaces $H^k(\Omega)$ on a bounded open set $\Omega$, instead of $C^k(\Omega)$, have sufficiently nice properties allowing for general existence theorems for elliptic boundary value problems. Modification of those spaces by including wights generalizes these existence results to unbounded spaces (see for example \cite{Bartnik1}). In this paper, we establish an existence theorem in $\CtoHr{0}{\delta}{k}$ spaces, a mix of both Hölder and Sobolev spaces. Our work readily implies similar existence results in the spaces $C^0([a,b] : H^k(S^2))$ when the domain is bounded. 

\vv

\subsubsection*{ Comparison with the Framework in \cite{an-huang2} and \cite{anderson-khuri}}

Given the two different approaches to the Bartnik static metric extension problem, it is of interest to describe the key differences between our framework and the one developed in \cite{an-huang2} and \cite{anderson-khuri}. In the latter, the extension problem is formulated as an elliptic boundary value problem in a certain gauge called the Bianchi-harmonic gauge. In contrast, our formalism is based on a simpler equivalent conformal system (see equation \eqref{conformal-problem}) and leverages the fact that the Ricci curvature determines the full Riemann curvature tensor in 3 dimensions; the extension problem is then presented, in a geodesic gauge, as an elliptic boundary value problem on the lapse function coupled with transport equations on the second fundamental form of the leaves of the equidistant foliation. Our framework has several advantages:

\begin{itemize}
\item Owing to the vanishing of the Weyl tensor in 3 dimensions, the complicated geometric equation $f \mathrm{Ric} = \mathrm{Hess} f$ simplifies to transport equations governing the evolution of the second fundamental form (see equations \eqref{redeq2} - \eqref{redeq5}). This allows for a more tractable analysis of the linearized problem, enabling us to establish local well-posedness for perturbations of \textit{every} coordinate Schwarzschild sphere, which strengthens the results in \cite{an-huang2}. This more accessible analysis may also open the door to addressing more general extension problems, such as the generalized Bartnik metric extension problem for non-time-symmetric initial data sets, where one seeks a \textit{stationary} (rather than static) extension. 

\item The linearized problem in our setting reduces to a novel nonlocal elliptic system (see equation \eqref{Psc}) that seems to be fundamental to this problem. This new perspective may serve as a useful tool for studying the global solvability of the problem and could have further implications for the theory of quasi-local mass in general relativity.

\item Our framework is flexible and can be adapted to use gauge foliations other than the equidistant foliation considered here. For instance, one can apply the results in \cite{inverse-mean} to reformulate the geometric equations in terms of the foliation generated by inverse mean curvature flow. This freedom may prove useful in analyzing the global solvability of the extension problem and exploring new geometric perspectives. 

\item There are intrinsic obstructions to solvability that seem to be fundamental to this problem which, in our framework, manifest in the contracted Codazzi equation (see the introduction to chapter \ref{proof section}). We are then able to relate this space in a natural way to the conformal structure of the $2$-sphere, providing an insightful geometric interpretation of these peculiar apparent obstructions. We circumvent this difficulty by introducing an artificial object to the definition of a solution to our problem (see section \ref{artificial-section}). A similar argument appears in \cite{an-huang} and \cite{an-huang2}.  

\item The solvability of certain elliptic boundary value problems play a central role in this extension problem. While \cite{an-huang2} relies on standard elliptic theory in weighted Hölder spaces (see lemma 2.3 in \cite{an-huang} and lemma 3.3 in \cite{an-huang2}), our framework necessitates working in nonstandard function spaces, for which classical results do not apply. As part of this paper, we establish new solvability results in these settings (see Chapter~\ref{elliptic-chapter}). The main result, Theorem~\ref{elliptic thm}, may be of independent interest, with potential applications to other extension problems involving coupled elliptic and non-elliptic (e.g., transport, parabolic, or hyperbolic) PDE systems.

\end{itemize}

\begin{remark}
We note that a limitation of our approach is that it applies specifically to 3 dimensional manifolds, whereas the framework developed in \cite{an-huang2} extends to all higher dimensions. In particular, in dimensions larger than 3, it is not yet known whether the extension problem is locally well-posed near \textit{every} coordinate Schwarzschild sphere. This reflects a key strength of the approach by An and Huang as they are able to establish a local well-posedness result for all but possibly a meager set of radii in higher dimensions.

\end{remark}

\subsubsection*{ Comparison with the Proof in \cite{AhmedAn}}

As mentioned above, Huang and An in \cite{an-huang2} reduce the local well-posedness of the Bartnik problem to showing that a certain operator has trivial kernel (see theorem 5.1 in \cite{an-huang2}). The analysis of the current paper, namely proposition \ref{prop-nonlocalpde-unique}, readily implies the kernel is trivial for coordinate Schwarzschild spheres of \textit{every} radius in $(2m_0,\infty)$, thus providing another proof of the same result. This alternative proof is outlined in \cite{AhmedAn} by An, Huang, Alexakis and the present author. The key difference between the current paper and \cite{AhmedAn} is essentially how the surjectivity of the corresponding operator is proven. In \cite{AhmedAn}, the surjectivity follows from lemma 3.10 in \cite{an-huang2}, which is based on proposition 3.1 in \cite{anderson-khuri}, asserting that the operator is Fredholm of index $0$. In the current paper, surjectivity is shown explicitly in the framework we use, owing to the more tractable analysis of our approach.


\vv



\subsubsection*{Acknowledgements} The author is grateful to Spyros Alexakis for his meticulous verification of the arguments presented in this paper and for his numerous invaluable suggestions that significantly enhanced the clarity and quality of the writing. The author is also grateful to Yakov Shlapentokh-Rothman for many insightful and fruitful discussions on this work.


\section{Preliminaries}
Let $M:= \R^3\setminus B_{n \cdot m_0}$ where $n> 2$ and $m_0 >0$. Denote by $g_{euc}$ the Euclidean metric on $M$ and by $\gamma_{\mathbb{S}^2}$ the round metric on the unit sphere $S^2$. 

\subsection{Properties of Static Vacuum Extensions} \label{properties of static vacuum extensions}

In this section, we will discuss some decay and regularity properties of static vacuum extensions and demonstrate that they can be written in the geodesic gauge. More precisely, we will show that given a static vacuum extension $(\mathfrak{g}, f)$, we can globally write the metric $g:=f^{-2} \mathfrak{g}$ in geodesic coordinates so that $\mathfrak{g}$ takes the form 

\[ \mathfrak{g} = f^{-2} dr^2 + {\gamma_{\mathfrak{g}}}_r \]

where $r$ is the distance function from the boundary with respect to $g$ and ${\gamma_{\mathfrak{g}}}_r$ is the induced metric on the level sets of $r$. Note that this form is directly observable in the Schwarzschild solutions $(\gsc, f_{sc})$ as $\gsc$ is given by  
\[\gsc = f_{sc}^{-2} dr^2 + r^2 \gamma_{\mathbb{S}^2}\]

\vvv

\begin{defn} \label{af def}
 Let $\eta >0$. A $\mathcal{C}^2$ metric $\mathfrak{g}$ over $M$  is asymptotically flat of order $\eta >0$ if there exists a coordinate system $(x^1,x^2,x^3)$ near infinity in which the metric satisfies

  \begin{itemize}
 \item $\mathfrak{g}_{ij} -\delta_{ij} = \mathcal{O}(|x|^{-\eta})$
 \item $\partial_k \mathfrak{g}_{ij} = \mathcal{O}(|x|^{-\eta-1})$
 \item $\partial_l\partial_k \mathfrak{g}_{ij} = \mathcal{O}(|x|^{-\eta-2})$
 \end{itemize}
 
\noindent where $\partial_k := \dd{x^k}$ and $|x|=\sqrt{|x^1|^2+|x^2|^2+|x^3|^2}$. For conciseness, we will write 
$\mathfrak{g}_{ij} = \delta_{ij} + \mathcal{O}_2(|x|^{-\eta})$ if the above conditions are satisfied.

\end{defn}

\begin{defn} For a metric $\mathfrak{g}$ and a positive function $f$, we say that a pair $(\mathfrak{g}, f)$ is a strongly asymptotically flat if $\mathfrak{g}$ admits a coordinate system $(x^1,x^2,x^3)$ near infinity in which 

\begin{equation}
\mathfrak{g}_{ij} =  \left( 1+ \frac{2m}{|x|}\right) \delta_{ij} + \mathcal{O}_2(|x|^{-2}), \qquad f = 1 - \frac{m}{|x|} + \mathcal{O}_2(|x|^{-2}) 
\end{equation}
\end{defn}



The Schwarzschild solutions $(\gsc, f_{sc})$ discussed in the introduction are examples of smooth strongly asymptotically flat static vacuum extensions. 

\vv

The system in \eqref{static equations} is equivalent to a lower order system of equations. Letting $g := f^2\mathfrak{g}, u:= \ln f$, a direct computation shows that $(\mathfrak{g}, f)$ solves equation $\eqref{static equations}$ if and only if $(g,u)$ solve 

\begin{equation} \label{conformal-problem}
Ric_{g} = 2du \otimes du, \qquad
\Delta_{g} u = 0. 
\end{equation}

We will call the above equations the conformal static vacuum equations. We will call the pair $(g_{sc}, u_{sc}) := (f_{sc}^2\,\, \mathfrak{g}_{sc}, \ln f_{sc})$ the conformal Schwarzschild solution. 

\vv

By taking advantage of the form that the Ricci curvature takes for $g$, Murchadha in \cite{murchadha} shows that every static vacuum extension $(\mathfrak{g}, f)$ is strongly asymptotically flat and is smooth away from the boundary. For the rest of this section, we will describe how this strong decay and regularity of static vacuum extensions allows us to write the extension problem in a geodesic gauge. 

\vv


\vv

Let $(g,u)$ solve the conformal static vacuum equations in \eqref{conformal-problem}. We wish to write the metric $g$ in geodesic coordinates. Let $r(\cdot) = \text{dist}(\cdot, \partial M) + r_0$ be a shifted distance function from $\partial M$. Due to the compactness of $\partial M$, the function $r$ is smooth on a neighborhood of $\partial M$, with $\partial M$ excluded, and so defines a foliation near $\partial M$ with the leaves being the level sets of $r$. We can then write the metric with respect to this foliation as $dr^2 + \gamma_r$ where $\gamma_r$ is the induced metric on the level sets. This representation of the metric generally does not hold globally and is valid only whenever $r$ is differentiable. However, under a smallness assumption on the Ricci curvature of $g$, it will hold that $r$ is differentiable everywhere on $M\setminus \partial M$ and the metric can be written globally as $dr^2 + \gamma_r$. This follows from the next proposition, which follows from a straightforward adaptation of the argument in proposition 5.01 in \cite{christodoulou}. 

\begin{prop} \label{gauge}
There exists $\tau'=\tau'(n,m_0)>0$ small enough such that the following is true for any $0<\tau<\tau'$.\\
If an asymptotically flat metric $g$ on $M$ of order $\eta>0$ satisfies in Cartesian coordinates
\begin{equation} \label{smallness}
|x|^{\eta} |g - g_{sc}| + |x|^{\eta+1}|\partial g - \partial g_{sc}| + |x|^{\eta+2}| \partial\partial g - \partial\partial g_{sc}| < \tau
\end{equation}
where $|\cdot|$ is with respect to the Euclidean metric $\delta$,  then:   

\begin{enumerate}
\item The affine parameter $r(\cdot) = \text{dist}_g(\partial M,\cdot)+nm_0$ is differentiable everywhere on $M\setminus \partial M$ and defines a global radial foliation with leaves $S_r$ diffeomorphic to $S^2$. Moreover, given a coordinate system $(x^1,x^2,x^3)$ near infinity as described in definition \ref{af def}, $r$ and $|x|$ are comparable in the sense that  
\begin{equation}
    C^{-1} |x| \leq r \leq C |x|
\end{equation}
for some constant $C>0$. 

\item With respect to this foliation, we have 
\begin{equation}  \label{trK-hatK-inf}
trK = \frac{2}{r} + \mathcal{O}_1(r^{-1-\eta}), \qquad |\hat K| = \mathcal{O}_1(r^{-1-\eta})
\end{equation}
where $K = \text{Hess}(r)$ is the second fundamental form on the leaves $S_r$, $trK$ is the trace of $K$, and $\hat K$ is the traceless part of $K$. 
\item There exists a unique diffeomorphism $\Phi: M \to [nm_0, \infty) \times S^2 $ such that $\Phi|_{\partial M} = Id_{S^2}$, $r (\cdot) = \pi_r \circ \Phi (\cdot)$ where $\pi_r$ is the projection onto the first coordinate,  and $\Phi_* g = dr^2 + {\gamma_g}_r$ where ${\gamma_g}_r$ is the push forward of the induced metric on $S_r$. 
\end{enumerate}
\end{prop}

This allows us to globally express the static vacuum extension $(\mathfrak{g}, f)$ in geodesic coordinates as follows: 
\[\mathfrak{g} = f^{-2} dr^2 + {\gamma_{\mathfrak{g}}}_r, \quad \text{where ${\gamma_{\mathfrak{g}}}_r$ is the induced metric on $S_r$}\]
and, hence, justifies the space of metrics that we will be working in (see definition \ref{space-of-metrics} in the next section).

\subsection{Function Spaces}

In this section, we define the function spaces that we will be using. Fix $k \in \Z_{\geq0}$ and $\delta \in \R$. From here onwards, we will identify $M$ with the space $[nm_0,\infty) \times S^2$.

\begin{defn} 
 We define the weighted Sobolev space $H^k_{\delta} (M)$ with weight $\delta$ to be the space of all functions $u$ in $H^k_{loc}(M)$ such that $\norm{u}_{k,\delta}< \infty$ respectively, where 
\begin{equation} \label{norm1}
 \norm{u}_{k, \delta} = \sum^k_{l = 0} \left\{ 
\int_M \left( |D^l u| \cdot r^{l -\delta} \right)^2 r^{-3} dV \right\}^
\frac{1}{2}
\end{equation}
where $r = |x|$, $D$ is the connection with respect to the Euclidean metric on $M$, and $dV$ is the Euclidean volume form on $M$. We will also denote the space $H^k_{\delta}(M)$ by $L^2_{\delta}(M)$ when $k=0$.\\
\end{defn}

\begin{defn} \label{def of Xkdelta}
\noindent We define the space $\mathcal{X}^k_{\delta}(M)$ to be the space of vector fields $X$ on $M$ with components $X^i := X(x^i)$ in $H^k_{\delta}(M)$, where $(x^1,x^2,x^3)$ is the standard cartesian coordinates. The norm we use is 
\begin{equation}
\norm{X}_{k,\delta}:= \sum_{l=0}^k \norm{|D^l X|}_{0,\delta-l}
\end{equation}
\end{defn}

\vv


\vv

\begin{defn} 
Let $H^k(S^2)$ be the usual $L^2$ space, when $k=0$, and Sobolov space, when $k\geq 1$, on $(S^2, \gamma_{\mathbb{S}^2})$. Let $\mathcal{M}^k(S^2)$ and $\mathcal{H}^k(S^2)$ be the space of metrics on $S^2$ and symmetric tensors on $S^2$, respectively, with components in $H^k(S^2)$. The norm we will use is as follows:
\begin{equation}
\norm{h}^2_{\mathcal{H}^k(S^2)} := \sum_{l=0}^k \norm{|\slashed D^l h|}^2_{L^2(S^2)}
\end{equation}
where $\slashed D$ is the covariant derivative on $S^2$ with respect $\gamma_{\mathbb{S}^2}$. \\

\noindent Let $\Omega^k(S^2)$ be the space of 1-forms on $S^2$ with components in $H^k(S^2)$. The norm used on this space is as follows:
\begin{equation}
\norm{\omega}^2_{\Omega^k(S^2)} := \sum_{l=0}^k \norm{|\slashed D^l \omega|}^2_{L^2(S^2)} 
\end{equation} 
\end{defn}
\vv

\begin{defn} Let $t \in \Z_{\geq 0}$. We define the space $H^t_{\delta}\left([nm_0,\infty); H^k(S^2)\right)$ to be the space of functions $u$ in $H^t_{loc}\left([nm_0,\infty); H^k(S^2)\right)$ such that $\normmH{u}{t}{k}{\delta} <\infty$, where 

\begin{equation}
\normmH{u}{t}{k}{\delta}^2:= \sum_{t'=0}^t\int_{nm_0}^{\infty} r^{-2\delta-1+2t'}   \norm{ \partial_r^{(t')} u(r)}^2_{H^k(S^2)} dr
\end{equation}
We also define the space $\CtoH{t}{\delta}{k}$ to be the space of continuous $H^k(S^2)$-valued functions $u$ on $[nm_0,\infty)$ such that $\normmC{u}{t}{k}{\delta} <\infty$, where 

\begin{equation}
\normmC{u}{t}{k}{\delta}^2:= \sum_{t'=0}^t \sup_{r\geq nm_0} \left( r^{-2\delta+2t'}   \norm{ \partial_r^{(t')} u(r)}^2_{H^k(S^2)}\right)
\end{equation}
\end{defn}
\vv

\vv

We then define the space $H^t_{\delta}\left([nm_0,\infty); \mathcal{M}^k(S^2)\right)$ and $H^t_{\delta}\left([nm_0,\infty); \mathcal{H}^k(S^2)\right)$ similarly to the above with norm
\begin{equation}
\normmH{h}{t}{k}{\delta}^2 := \sum_{t'=0}^t\int_{nm_0}^{\infty} r^{-2\delta-1+2t'} \norm{\partial_r^{(t')} h(r)}^2_{\mathcal{H}^k(S^2)} dr
\end{equation}

\vv

\begin{defn}   \label{space-of-metrics}
Define $\mathcal{M}^k_{\delta}(M)$ to be the space of metrics on $[nm_0,\infty) \times S^2$ of the form $dr^2+ g(r)$ where $g(r) = r^2(\gamma_{\infty} + h(r))$, $\gamma_{\infty} \in \mathcal{M}^k(S^2)$, and $h \in \HtoHH{2}{\delta}{k}$. The space $\mathcal{M}^k_{\delta}(M)$ can be naturally identified with an open subset of the Banach space $\mathcal{H}^k(S^2) \oplus \HtoHH{2}{\delta}{k}$. This makes $\mathcal{M}^k_{\delta}(M)$ an open Banach submanifold of $\mathcal{H}^k(S^2) \oplus \HtoHH{2}{\delta}{k}$ and, in particular, a Banach manifold. Given $g_0 \in \mathcal{M}^k_{\delta}(M)$, the tangent space $T_{g_0}M^k_{\delta}$ is isomorphic to the space of tensors $\tilde g$ of the form $\tilde g = r^2(\tilde \gamma_{\infty}+ \tilde h(r))$, where $\tilde \gamma_{\infty} \in \mathcal{H}^k(S^2)$ and $\tilde h \in  H^2_{\delta}\left([nm_0,\infty); \mathcal{H}^k(S^2)\right)$, equipped with the norm 

\begin{equation}
\norm{\tilde g}_{\mathcal{M}^k_{\delta}} := \norm{\tilde \gamma_{\infty}}_{\mathcal{H}^k(S^2)} + \normmH{\tilde h}{2}{k}{\delta} 
\end{equation}

\end{defn}

\vv
\begin{defn} \label{spaces for u}
Let $t\geq 0$. Denote by $\AH{t}{k}{\delta}(M)$ and $\AC{t}{k}{\delta}(M)$ the spaces

\begin{equation}
\AH{t}{k}{\delta}(M) := \bigcap_{t'=0}^t \HtoH{t'}{\delta}{k-t'} , \qquad \AC{t}{k}{\delta}(M) :=  \bigcap_{t'=0}^t \CtoH{t'}{\delta}{k-t'}
\end{equation}
equipped with the norms

\begin{equation}
\norm{u}^2_{\AH{t}{k}{\delta}} :=\max_{0\leq t'\leq t}  \normmH{u}{t'}{k-t'}{\delta}^2 , \qquad \norm{u}^2_{\AC{t}{k}{\delta}} := \max_{0\leq t'\leq t}  \normmC{u}{t'}{k-t'}{\delta}^2 
\end{equation}

Note that 
\[ u \in \AH{t}{k}{\delta}(M) \quad \Longleftrightarrow \quad \text{for every $0\leq t'\leq t$,   } \,\,  \partial_r^{(t')} u \in \LtoH{\delta-t'}{k-t'} \]
\[ u \in \AC{t}{k}{\delta}(M) \quad \Longleftrightarrow \quad \text{for every $0\leq t'\leq t$, } \,\, \partial_r^{(t')} u \in \CtoH{0}{\delta-t'}{k-t'} \]
Denote the intersection of these spaces by $\AHC{t}{k}{\delta}(M)$ defined by
\begin{equation*}
\AHC{t}{k}{\delta}(M) := \AH{t}{k}{\delta}(M) \bigcap \AC{t}{k}{\delta}(M)
\end{equation*}
equipped with the norm 

\begin{equation}
\norm{u}_{\mathcal{A}^{(t,k)}_{\delta}}^2:= \max_{0\leq t'\leq t} ( \normmH{u}{t'}{k-t'}{\delta}^2 + \normmC{u}{t'}{k-t'}{\delta}^2 )
\end{equation}

\end{defn}

\vvv

In the next proposition, we list some important results regarding the spaces we defined that will be repeatedly used in the rest of the paper.

\begin{prop} \label{prop-spaces} \
\begin{enumerate}[label=\textbf{(\alph*)}]

\item Let $k\geq 0$, $t\geq 1$ and $\delta <0$. Every function $u \in \HtoH{t}{\delta}{k}$ has a representative in $ \CtoH{t-1}{loc}{k}$, which will also be denoted by $u$. Furthermore, there exists a constant $C>0$ such that for every $u \in \HtoH{t}{\delta}{k}$ 
\begin{equation}
\normmC{u}{t-1}{k}{\delta} \leq C\normmH{u}{t}{k}{\delta}
\end{equation}
If in addition $k\geq 2$, then $ \partial_r^{(t')}u(r) \in C^{k-2}(S^2)$ for every $r\in [nm_0, \infty)$ and $0\leq t'\leq t-1$. Also, for every $0\leq l \leq k-2$ and $0\leq t'\leq t-1$, 

\begin{equation}
|\slashed D^l \partial_r^{(t')} u| = o(r^{\delta-l - t'}) \quad \text{as $r \to \infty$}
\end{equation}
\item Let $dr^2+g(r) \in \mathcal{M}^k_{\delta}(M)$ with $g(r) = r^2 (\gamma_{\infty} + h(r))$. 


Then with respect to the foliation defined by the level sets of $r$, the trace and traceless part of the fundamental form satisfy

\begin{equation} \label{prop-space-trK-K}
\left|trK - \frac{2}{r}\right| \in \HtoH{1}{\delta-1}{k}, \qquad \hat K \in \HtoHH{1}{\delta-1}{k}
\end{equation}

Furthermore, the metric $dr^2+g(r)$ is asymptotically flat if and only if $\gamma_{\infty}$ is of constant curvature $1$. 
\item Let $k_1<k_2$, $t_1<t_2$, and $\delta_2 < \delta_1<0$. Then the space $\HtoH{t_2}{\delta_2}{k_2}$ is compactly embedded in $\HtoH{t_1}{\delta_1}{k_1}$. Furthermore, the space $\CtoH{t_2}{\delta_2}{k_2}$ is compactly embedded in $\CtoH{t_1}{\delta_1}{k_1}$.

\item Let $k\geq 1$ and $\delta\in \R$. Suppose a function $ u$ satisfies
\[  u \in \LtoH{\delta}{k}(M), \quad \partial_r u \in \LtoH{\delta-1}{k-1}(M) \]
Then for every $r\in [nm_0,\infty)$, 
\[u(r) \in H^{k-1/2}(S^2)\]
\end{enumerate}
\end{prop}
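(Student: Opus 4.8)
The plan is to establish the trace embedding $u(r) \in H^{k-1/2}(S^2)$ by viewing $u$, after a logarithmic change of the radial variable, as a function in an $H^1$-Bochner space over a half-line into the scale of Sobolev spaces on $S^2$, and then invoking the classical trace theorem for such Bochner spaces. First I would introduce the substitution $s = \ln r$ (or equivalently work directly with the weighted norms), so that the weight factors $r^{-2\delta-1}$ and $r^{-2\delta+1}$ appearing in $\normmH{u}{0}{k}{\delta}$ and $\normmH{\partial_r u}{0}{k-1}{\delta-1}$ become comparable after accounting for $dr = r\,ds$; the point is that the hypotheses say precisely that $u \in L^2\big((nm_0,\infty); H^k(S^2)\big)$ and $\partial_r u \in L^2\big((nm_0,\infty); H^{k-1}(S^2)\big)$ with respect to the appropriate weighted measures, hence $u$ lies (locally in $r$, and with controlled behavior at infinity) in $H^1_{loc}\big([nm_0,\infty); H^{k-1}(S^2)\big)$ with $u(r)$ itself valued in $H^k(S^2)$.

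Next I would set up the interpolation-space framework: with $X_0 = H^{k-1}(S^2)$ and $X_1 = H^{k+1}(S^2)$ (or $X_0 = H^{k-1}(S^2)$, $X_1 = H^k(S^2)$ with a half-step of interpolation), the relevant fact is that a function $v$ on an interval $I$ with $v \in L^2(I; X_{1/2})$ — here $X_{1/2} = H^k(S^2)$ — and $v' \in L^2(I; X_{-1/2}) = L^2(I; H^{k-1}(S^2))$ has a continuous representative $I \to [X_{1/2}, X_{-1/2}]_{1/2} = H^{k-1/2}(S^2)$, by the standard Lions–Magenes / Aubin–Lions trace theorem (the same mechanism that gives $H^1(I; H^{-1}) \cap L^2(I; H^1) \hookrightarrow C(I; L^2)$, shifted up by $k - 1/2$ on the $S^2$ scale). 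I would cite this as the classical result on vector-valued Sobolev spaces (e.g. from \cite{evans}, or Lions–Magenes), since a self-contained proof would just reproduce the standard integration-by-parts argument: write $\frac{d}{dr}\langle (1-\slashed\Delta)^{(2k-1)/4} u(r), (1-\slashed\Delta)^{(2k-1)/4} u(r)\rangle = 2\langle (1-\slashed\Delta)^{(2k-1)/2} u(r), \partial_r u(r)\rangle$ and bound the right side by $\norm{u(r)}_{H^k}\norm{\partial_r u(r)}_{H^{k-1}}$, which is integrable in $r$ by Cauchy–Schwarz and the two hypotheses; this controls $\norm{u(r)}_{H^{k-1/2}}^2$ pointwise and shows it is continuous in $r$, in particular finite for every $r$.

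The one genuine subtlety — and the step I expect to be the main obstacle to writing cleanly — is handling the weights and the non-compact radial direction: the naive integration by parts from $+\infty$ needs $\norm{u(r)}_{H^{k-1/2}}^2 \to 0$ (or at least $\liminf = 0$) as $r \to \infty$, which follows because the weighted integrability $\int r^{-2\delta-1}\norm{u(r)}_{H^k}^2\,dr < \infty$ with $\delta < 0$ forces $\liminf_{r\to\infty}\norm{u(r)}_{H^k}^2 = 0$ along a sequence (indeed the pointwise decay statements of part (a) give this when $k$ is large enough, and a soft argument handles the general case), so one can start the estimate from a large radius and absorb the boundary term. Once that is in place, the inequality $\norm{u(r)}_{H^{k-1/2}}^2 \le 2\int_r^\infty \norm{u(\rho)}_{H^k}\norm{\partial_r u(\rho)}_{H^{k-1}}\,d\rho$ (up to weight bookkeeping) is uniform in $r$, finite by Cauchy–Schwarz applied with the complementary weights $r^{-\delta-1/2}$ and $r^{-\delta+1/2}$, and yields $u(r) \in H^{k-1/2}(S^2)$ for every $r \in [nm_0,\infty)$ as claimed. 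I would remark that the same computation in fact shows $u \in C^0\big([nm_0,\infty); H^{k-1/2}(S^2)\big)$, which is the form in which the result is used later.
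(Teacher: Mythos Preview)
Your approach is correct and is precisely the standard argument behind the references the paper cites: the paper's own proof of (d) is simply a pointer to \cite{evans} \S5.9 (the Lions--Magenes trace theorem for vector-valued Sobolev spaces), which is exactly the mechanism you reconstruct via the $\frac{d}{dr}\|u(r)\|_{H^{k-1/2}}^2$ computation and interpolation between $H^k$ and $H^{k-1}$.

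One small slip: you invoke ``$\delta<0$'' when handling the boundary term at infinity, but the hypothesis of (d) allows $\delta\in\R$. This is not a real obstacle --- your own logarithmic substitution $s=\ln r$ and the reweighting $w(s)=e^{-\delta s}u(e^s)$ convert the hypotheses into the unweighted statements $w\in L^2(I;H^k)$, $\partial_s w\in L^2(I;H^{k-1})$ regardless of the sign of $\delta$, at which point the standard trace theorem applies directly and the decay at infinity is automatic for $w$. So just drop the ``$\delta<0$'' clause and route the argument through $w$.
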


\begin{proof}
$(a)$, $(b)$ and $(d)$ follow immediately from standard results on Sobolev spaces (see \cite{Bartnik1} theorem 1.2 and lemma 1.4, and \cite{evans} section 5.9). 

\vv

We focus on proving $(b)$. The metric $g(r)$ evolves according to the equation

\begin{equation}
\partial_r g(r) = trK \, g(r) + 2 \hat K
\end{equation}

where $K = \text{Hess}(r)$ is the second fundamental form on the leaves $S_r$, $trK$ is the trace of $K$, and $\hat K$ is the traceless part of $K$. Since $g(r) = r^2(\gamma_{\infty} + h(r))$, it follows that

\begin{equation}
trK = \frac{2}{r} +  \frac{1}{2}tr_{r^{-2}g(r)} (\partial_r h(r)), \quad 2\hat K = r^2\Big(\partial_r h(r) - tr_{ g(r)} (\partial_r h(r))g(r) \Big)
\end{equation}

This directly implies equation \eqref{prop-space-trK-K}

In view of the Gauss and Codazzi equations, we get 

\begin{equation}
R = 2Ric(\dd{r},\dd{r}) + R_{S_r} -\frac{1}{2} trK^2 + |\hat K|^2, \qquad Ric(\dd{r}, \dd{r}) = -\partial_r trK - \frac{1}{2} (trK)^2- |\hat K|^2
\end{equation} 

It follows immediately using equation \eqref{prop-space-trK-K} and the fact that the scalar curvature $R_{\gamma_{\infty}}$ of $(S^2, \gamma_{\infty})$ is the limit of $r^2 R_{S_r}$ as $r$ goes to infinity that $R$ decays faster than $r^{-2}$ if and only if $R_{\gamma_{\infty}} = 2$. We conclude that the metric is asymptotically flat if and only if $\gamma_{\infty}$ is of constant curvature $1$. 

\end{proof}

\subsection{The Main Theorem}

\begin{defn}
In place of the mean curvature $H$, we will work with $\trko := 2H$ for convenience, which represents the trace of the hypothetical second fundamental form on $\partial M$. From this point forward, we will denote the Bartnik data on $\partial M$ by $(\gammao, \frac{1}{2} \trko)$. We will also denote by $(\gammasc, \frac{1}{2} \trksc)$ the Schwarzschild Bartnik data, which is given by 

\begin{equation}
\gamma_{\mathfrak{g_{sc}}} = (nm_0)^2 \gamma_{\mathbb{S}^2}, \quad trK_{\mathfrak{g_{sc}}} = \frac{2 \sqrt{1-\frac{2}{n}}}{n m_0}
\end{equation}

\end{defn}

The statement of the main theorem is as follows. 

\begin{mainthm}
Let $M:= \R^3\setminus B_{nm_0}$ where $m_0>0$ and $n>2$. Let $\delta \in (-1,-\frac{1}{2}]$ and $k\geq 5$. There exists a neighbourhood $\mathcal{U}$ of $(\gammasc, \frac{1}{2} \trksc)$ in $\mathcal{M}^{k+1}(\partial M) \times H^k(\partial M) $ and a unique $\mathcal{C}^1$ map $\mathcal{\bf H}: (\gammao, \frac{1}{2} \trko) \mapsto (g,u)$ on $\mathcal{U}$ to $\mathcal{M}^k_{\delta}(M) \times \mathcal{A}^{(2,k+1)}_{\delta}(M)$ in which $(\mathfrak g, f) := (e^{-2u} g, e^u)$ solves the static Einstein vacuum equations with Bartnik data $(\gammao, \frac{1}{2} \trko)$. 
\end{mainthm}

Given Bartnik data $(\gammao, \frac{1}{2} \trko) \in \mathcal{U}$, the pair $(g,u) = \mathcal{\bf H} (\gammao, \frac{1}{2} \trko)$ will then solve the conformal static vacuum equations written out in equation \eqref{conformal-problem}. Due to proposition \eqref{prop-spaces}, $\mathfrak{g}$ and $f$ are $\mathcal{C}^2$ on $M$ and satisfy, in some coordinates $(x^1, x^2, x^3)$, 

\begin{equation}
\mathfrak{g}_{ij} = \delta_{ij} + \mathcal{O}_2(|x|^{\delta}), \quad  f = 1+\mathcal{O}_2(|x|^{\delta})
\end{equation}
Moreover, the discussion in section \ref{properties of static vacuum extensions} implies that $(\mathfrak{g}, f)$ is strongly asymptotically flat and is smooth away from the boundary.

\section{Solvability of Elliptic BVP in $\AH{2}{k}{\delta}(M)$ and $\AC{2}{k}{\delta}(M)$ } \label{elliptic-chapter}

In this chapter, we will establish the well-posedness of the elliptic PDE $\Delta_{g_{sc}} \tilde u = 0$ on $(M,g_{sc})$, subject to Dirichlet boundary conditions, in the function spaces $\AH{2}{k}{\delta}(M)$ and $\AC{2}{k}{\delta}(M)$. Here, $g_{sc}$ is the conformal Schwarzschild metric on $M = \R^3 \setminus B_{n\cdot m_0}$ given by 

\begin{equation}
g_{sc} = dr^2 + r(r-2m_0) \gamma_{\mathbb{S}^2}
\end{equation}
and $n>2$, $m_0>0$. 

More precisely, we will prove the following theorem. 

\begin{thm} \label{elliptic thm}
Define the operator $\mathcal{Q}$ by: 
\[\mathcal{Q} (\tilde  u) := \begin{pmatrix} \Delta_{g_{sc}} \tilde u \lv \\ \left. \tilde u \right|_{\partial M} \end{pmatrix} \]
For $\delta \in (-1,-\frac{1}{2}]$ and $k\geq 1$,  
\[Q:\AH{2}{k+1}{\delta}(M) \to \AH{0}{k-1}{\delta-2}(M) \times H^{k+1/2}(\partial M) \qquad \text{is an isomorphism}\]
and
\[Q:\AC{2}{k+1}{\delta}(M) \to \AC{0}{k-1}{\delta-2}(M)  \times H^{k+1}(\partial M) \qquad \text{is an isomorphism} \]
\end{thm}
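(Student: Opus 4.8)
The plan is to prove the isomorphism statement in two stages: first establish injectivity together with the a priori energy estimates that control the full $\AH{2}{k+1}{\delta}$ (resp.\ $\AC{2}{k+1}{\delta}$) norm by the data norm, and then deduce surjectivity by a continuity/compactness argument. The key structural observation is that the Schwarzschild metric $g_{sc}=dr^2+r(r-2m_0)\gamma_{\mathbb{S}^2}$ is of warped-product form, so writing $\phi(r):=r(r-2m_0)$ the Laplacian decomposes as
\begin{equation}
\Delta_{g_{sc}}\tilde u = \partial_r^2 \tilde u + \frac{\phi'(r)}{\phi(r)}\,\partial_r \tilde u + \frac{1}{\phi(r)}\slashed\Delta_{S^2}\tilde u,
\end{equation}
which is exactly the feature that makes the $r$-variable behave like a time variable and the spherical Laplacian like the spatial operator. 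I would first treat $\Delta_{g_{sc}}\tilde u = F$ with $\tilde u|_{\partial M}=\psi$ as an (elliptic, but to be analyzed evolutionarily) boundary value problem: decompose in spherical harmonics $\tilde u(r)=\sum_\ell \tilde u_\ell(r) Y_\ell$, so each mode solves a second-order linear ODE with regular-singular behavior at $r=\infty$ and a regular endpoint at $r=nm_0$. For each $\ell$ the indicial/asymptotic analysis at infinity gives one solution decaying like $r^{-1}$ (faster than $r^\delta$ fails — here $\delta\in(-1,-1/2]$ is chosen precisely so that the decaying branch is admissible and the growing branch $\sim r^0$ or $\sim r^{2\ell+?}$ is excluded) and one growing; imposing decay selects a one-dimensional solution space per mode, and the Dirichlet datum at $r=nm_0$ then fixes the constant. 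This yields a formal solution operator; the real work is the norm bounds.

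For the quantitative estimates I would run an energy argument directly on $\Delta_{g_{sc}}\tilde u = F$: multiply by appropriate weights $r^{-2\delta+\text{const}}$ and by $\slashed\Delta^j$ for $0\le j\le k-1$, integrate over $S^2$, and track the resulting ODE-type inequality in $r$. Because $\delta\in(-1,-1/2]$, integration by parts in $r$ over $[nm_0,\infty)$ produces no boundary term at infinity (the decay from Proposition~\ref{prop-spaces}(a) kills it) and a controlled boundary term at $r=nm_0$ absorbed by the $H^{k+1/2}(\partial M)$ (resp.\ $H^{k+1}(\partial M)$) trace norm via Proposition~\ref{prop-spaces}(d). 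This gives $\norm{\tilde u}_{\LtoHr{\delta}{k+1}}$ and $\norm{\partial_r \tilde u}_{\LtoHr{\delta-1}{k}}$ in terms of $\norm{F}_{\LtoHr{\delta-2}{k-1}}$ and $\norm{\psi}_{H^{k+1/2}}$; then one bootstraps the equation itself, $\partial_r^2\tilde u = F - \tfrac{\phi'}{\phi}\partial_r\tilde u - \tfrac1\phi\slashed\Delta\tilde u$, to bound $\partial_r^2\tilde u$ in $\LtoHr{\delta-2}{k-1}$, which is exactly the third component of the $\AH{2}{k+1}{\delta}$ norm. The $\AC$ estimates come either from the same argument with $\sup_r$ in place of $\int dr$, or from the embedding Proposition~\ref{prop-spaces}(a) combined with a separate pointwise-in-$r$ estimate. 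Injectivity is the special case $F=0$, $\psi=0$ of these estimates.

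With injectivity and the a priori estimate in hand, surjectivity follows by a standard argument: the operator $\mathcal Q$ is bounded between the stated Banach spaces (routine from the warped-product formula and the mapping properties of $\slashed\Delta$), it is injective with closed range by the a priori estimate, and I would identify the range as all of the target either (i) by the method of continuity, connecting $\Delta_{g_{sc}}$ through the family $(1-s)\Delta_{euc}+s\Delta_{g_{sc}}$ of warped-product Laplacians for which the uniform a priori estimate persists, and invoking known solvability for the Euclidean model (where spherical-harmonic ODEs are explicitly solvable), or (ii) by directly constructing the solution mode-by-mode via the ODE solution operator above and checking, using the energy estimate applied to truncations $\sum_{\ell\le N}$, that the partial sums converge in the target space. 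A density argument (smooth compactly supported data are dense) reduces the construction to the case where all sums are finite.

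The main obstacle I expect is the low-$\ell$ modes as $n\to 2$, i.e.\ the near-horizon degeneration flagged in the introduction: when $r_0=nm_0$ is close to $2m_0$, the coefficient $\phi(r)=r(r-2m_0)$ is small near the inner boundary and the ODE for the $\ell=0$ (and low $\ell$) mode becomes nearly singular there, so the constant in the energy estimate a priori blows up. The resolution — which the paper signals is handled in Proposition~\ref{prop-nonlocalpde-unique} — must be to track the $n$-dependence of the weights carefully, exploiting that although $\phi$ degenerates the endpoint $r=nm_0$ is still $>2m_0$ so $\phi$ is strictly positive and the ODE is genuinely regular there for each fixed $n$; the estimate is uniform on compact subsets of $n>2$ but the constant is allowed to depend on $n$, which suffices for the isomorphism statement at fixed $n$. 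Getting clean, $k$-uniform constants through the repeated $\slashed\Delta^j$ commutations on $S^2$ (where $\slashed\Delta$ commutes with itself but the weights and the $\tfrac1\phi$ factor interact with the energy) is the other technical point requiring care.
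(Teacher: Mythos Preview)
Your overall architecture matches the paper's: spherical-harmonic decomposition of the warped-product Laplacian into a family of second-order ODEs in $r$, weighted energy estimates for the $H$-scale, and a bootstrap using the equation for $\partial_r^2\tilde u$. The paper also reduces surjectivity to the a priori estimate, but instead of method of continuity it simply quotes Maxwell's existing isomorphism $Q:H^2_\delta(M)\to L^2_{\delta-2}(M)\times H^{3/2}(\partial M)$ as a baseline and uses the estimate to upgrade the regularity; your alternatives (continuity in $s$, or mode-by-mode construction with truncation) would also work but are more labor than necessary.

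There is, however, a real gap in your treatment of the $\AC{2}{k+1}{\delta}$ estimate. Neither of your two suggested routes succeeds. The embedding in Proposition~\ref{prop-spaces}(a) loses one $r$-derivative, so from $\AH{2}{k+1}{\delta}$ you only reach $\AC{1}{k+1}{\delta}$, not $\AC{2}{k+1}{\delta}$; and in any case the target $C$-estimate must hold with $F$ merely in $\AC{0}{k-1}{\delta-2}$, which is not contained in $\AH{0}{k-1}{\delta-2}$. The alternative of ``the same argument with $\sup_r$ in place of $\int dr$'' does not make sense for an integration-by-parts energy identity: the cross terms and boundary contributions that are absorbed in the $L^2_r$ argument simply do not have pointwise analogues. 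The paper's proof of the $C$-estimate is genuinely different from its $H$-argument: after the change of variable $z=r/m_0-1$ the mode ODE becomes the Legendre equation, and the solution is written explicitly via variation of parameters against the Legendre functions $P_\ell, Q_\ell$. The crux is then a set of bounds on $P_\ell(z), Q_\ell(z), P_\ell'(z), Q_\ell'(z)$ that are \emph{uniform in both $z\geq R$ and $\ell\geq 1$}, of the form $z^{-\ell}|P_\ell(z)|\leq C\big(\tfrac{2z}{z+\sqrt{z^2-1}}\big)^{-\ell}$ and similarly for the others; these are obtained in the appendix from Olver/Frenzen/Shivakumar--Wong large-degree asymptotics for Legendre functions via modified Bessel functions. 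Feeding these into the variation-of-parameters formula yields $\normmmC{a_{m\ell}}{0}{\delta}\lesssim |c_{m\ell}|+\ell^{-2}\normmmC{f_{m\ell}}{0}{\delta}$ and the analogous bounds for derivatives, with the correct $\ell$-powers to sum to the $H^{k+1}(\partial M)$ boundary norm. This is the missing idea in your proposal.

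A minor misidentification: the near-horizon difficulty you flag is not located in this theorem at all. The constant in Theorem~\ref{elliptic thm} depends on $n$ but is finite for every fixed $n>2$; the delicate $n\to 2$ analysis the introduction refers to lives in Proposition~\ref{prop-nonlocalpde-unique}, which concerns the \emph{nonlocal} operator $\mathcal P_{sc}$ (coupling $\tilde u$ to its own boundary trace and normal derivative), not the plain Dirichlet Laplacian $\mathcal Q$.
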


\vv

\begin{remark}
In particular, it holds that 
\[Q:\AHC{2}{k+1}{\delta}(M) \to \AHC{0}{k-1}{\delta-2}(M) \times H^{k+1}(\partial M) \qquad \text{is an isomorphism,}  \]
which will be used in section \ref{proofDPhi}. 
\end{remark}

\vv

The map $Q$ can be defined on the space $\AH{2}{k+1}{\delta}(M)$ and $\AC{2}{k+1}{\delta}(M)$ with codomain $\AH{0}{k-1}{\delta-2}(M)\times H^{k+1/2}(\partial M)$ and $\AC{0}{k-1}{\delta-2}(M)\times H^{k+1}(\partial M)$ respectively. Indeed, we deduce directly from the definition of our Banach spaces that for all $u \in \AH{2}{k+1}{\delta}(M)$, 

\[ \Delta_{g_{sc}} \tilde u  = \partial^2_r \tilde u + \frac{2(r-m_0)}{r(r-2m_0)} \partial_r \tilde u + \frac{1}{r(r-2m_0)}\slashed \Delta_{\gamma_{\mathbb{S}^2}} \tilde u (r) \in \LtoH{\delta-2}{k-1}= \AH{0}{k-1}{\delta-2}(M)\] \[ \tilde u(nm_0) \in H^{k+1/2}(S^2)  \qquad \text{(by proposition \ref{prop-spaces} (d))}\]

Similarly, for all $u \in \AC{2}{k+1}{\delta}(M)$, 
\[ \Delta_{g_{sc}} \tilde u  = \partial^2_r \tilde u + \frac{2(r-m_0)}{r(r-2m_0)} \partial_r \tilde u + \frac{1}{r(r-2m_0)}\slashed \Delta_{\gamma_{\mathbb{S}^2}} \tilde u (r) \in \CtoH{0}{\delta-2}{k-1} = \AC{0}{k-1}{\delta-2}(M)\] \[ \tilde u(nm_0) \in H^{k+1}(S^2) \]

We recall the following result from Maxwell in \cite{maxwell}:
\[Q:H^2_{\delta}(M) \to L^{2}_{\delta-2}(M) \times H^{3/2}(\partial M) \qquad \text{is an isomorphism}  \]
 To prove theorem \ref{elliptic thm}, it then suffices to prove the estimates 
\[\norm{\tilde u}_{\AH{2}{k+1}{\delta}} \leq C \norm{Q(\tilde u)}_{ \AH{0}{k-1}{\delta-1} \times H^{k+1/2}(\partial M)}\]
\[\norm{\tilde u}_{\AC{2}{k+1}{\delta}} \leq C \norm{Q(\tilde u)}_{ \AC{0}{k-1}{\delta-1} \times H^{k+1}(\partial M)}\]
for all $\tilde u$ in $\AH{2}{k+1}{\delta}(M)$ and $\AC{2}{k+1}{\delta}(M)$ respectively. These estimates will be the content of the next lemma.

\begin{lem} \label{main-est}
\begin{itemize}
\item There exist a constant $C>0$ such that for any $\tilde u \in \AH{2}{k+1}{\delta}(M)$, the following estimate holds. 

\begin{equation}
\norm{\tilde u}_{\AH{2}{k+1}{\delta}} \leq C \left( \norm{\Delta_{g_{sc}} \tilde u}_{\AH{0}{k-1}{\delta}} + \norm{\tilde u}_{H^{k+1/2}(\partial M)} \right)
\end{equation}
\item There exist a constant $C>0$ such that for any $\tilde u \in \AC{2}{k+1}{\delta}(M)$, the following estimate holds. 

\begin{equation}
\norm{\tilde u}_{\AC{2}{k+1}{\delta}} \leq C \left( \norm{\Delta_{g_{sc}} \tilde u}_{\AC{0}{k-1}{\delta}} + \norm{\tilde u}_{H^{k+1}(\partial M)} \right)
\end{equation}

\end{itemize}
\end{lem}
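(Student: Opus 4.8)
\textbf{Proof proposal for Lemma \ref{main-est}.}

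The plan is to establish the two estimates by exploiting the explicit structure of $\Delta_{g_{sc}}$ written out just above the lemma, namely
\[
\Delta_{g_{sc}} \tilde u = \partial_r^2 \tilde u + \frac{2(r-m_0)}{r(r-2m_0)} \partial_r \tilde u + \frac{1}{r(r-2m_0)} \slashed \Delta_{\gamma_{\mathbb{S}^2}} \tilde u,
\]
and to treat this as an ODE in $r$ taking values in the scale of Sobolev spaces $H^s(S^2)$, with the angular Laplacian playing the role of a (nonnegative, self-adjoint) elliptic operator on the fiber. The first step is a \emph{zeroth-order / base estimate}: I would control $\normmH{\tilde u}{0}{k+1}{\delta}$, i.e. the $\LtoHr{\delta}{k+1}$ norm, together with $\normmH{\tilde u}{0}{k}{\delta}$-type quantities, by combining the already-cited isomorphism of Maxwell ($Q: H^2_\delta(M) \to L^2_{\delta-2}(M) \times H^{3/2}(\partial M)$) with elliptic regularity on each sphere $S_r$. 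Concretely, for fixed $r$ one views $\frac{1}{r(r-2m_0)}\slashed\Delta \tilde u(r) = \Delta_{g_{sc}}\tilde u(r) - \partial_r^2\tilde u(r) - \frac{2(r-m_0)}{r(r-2m_0)}\partial_r\tilde u(r)$ and applies $L^2(S^2)$ elliptic estimates for $\slashed\Delta_{\gamma_{\mathbb{S}^2}}$ to gain two angular derivatives, iterating up to order $k+1$; the weights $r^{-2}$ appearing in front of $\slashed\Delta$ are exactly what convert a $\delta-2$ weight on the right into a $\delta$ weight on the left, consistent with the norms defined in Definition \ref{spaces for u}.

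The second step is the \emph{derivative (transport) estimates} for $\partial_r \tilde u$ and $\partial_r^2 \tilde u$. Here I would not differentiate the equation in $r$ and re-solve; instead I would read off $\partial_r^2 \tilde u$ algebraically from the equation:
\[
\partial_r^2 \tilde u = \Delta_{g_{sc}}\tilde u - \frac{2(r-m_0)}{r(r-2m_0)}\partial_r\tilde u - \frac{1}{r(r-2m_0)}\slashed\Delta_{\gamma_{\mathbb{S}^2}}\tilde u.
\]
Given the base estimate controls $\tilde u$ in $\LtoHr{\delta}{k+1}$ (hence $\frac{1}{r(r-2m_0)}\slashed\Delta\tilde u$ in $\LtoHr{\delta-2}{k-1}$) and controls $\partial_r\tilde u$ in $\LtoHr{\delta-1}{k}$ (the latter needs its own argument — see below), the displayed identity immediately bounds $\partial_r^2\tilde u$ in $\LtoHr{\delta-2}{k-1}$ by the right-hand side, which is precisely the $\AH{0}{k-1}{\delta-2}$ norm of $\Delta_{g_{sc}}\tilde u$ plus lower-order terms. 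The genuinely new ingredient is controlling $\partial_r \tilde u$ in $\LtoHr{\delta-1}{k}$: for this I would run a weighted energy estimate, multiplying the equation by an appropriate power of $r$ times $\partial_r\tilde u$ (and its angular derivatives), integrating over $S^2$ and over $r \in [nm_0, R]$, and integrating by parts in $r$; the boundary term at $r = nm_0$ produces the $H^{k+1/2}(\partial M)$ (resp. $H^{k+1}$) norm of $\tilde u$, the decay built into the weight $\delta < -1/2$ kills the boundary term at $r = R \to \infty$ (using Proposition \ref{prop-spaces}(a), which gives $o(r^{\delta-l-t'})$ decay), and the bulk terms close after absorbing lower-order contributions via the base estimate and a Poincaré/Hardy-type inequality in $r$. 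The $\AC{}{}{}$ version follows the same scheme but with $\sup_r$ in place of $\int dr$: one derives a pointwise-in-$r$ differential inequality for the quantity $r^{-2\delta+2t'}\norm{\partial_r^{(t')}\tilde u(r)}_{H^{k+1-t'}(S^2)}^2$ and integrates it using the $L^2$-in-$r$ bound already obtained (this is the standard way $C^0_t$ bounds are recovered from $L^2_t$ energy estimates, as in \cite{evans} §7), which is why the boundary data is measured in $H^{k+1}$ rather than $H^{k+1/2}$.

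The main obstacle I expect is the $r \to 2m_0$ degeneracy that is flagged in the introduction: the coefficient $\frac{2(r-m_0)}{r(r-2m_0)}$ in front of $\partial_r\tilde u$ blows up like $\frac{1}{r-2m_0}$ as $r \downarrow 2m_0$, and since $n > 2$ this is not on the domain $[nm_0,\infty)$ — however $nm_0$ can be arbitrarily close to $2m_0$, so the constant $C$ in the estimate a priori degenerates as $n \to 2$, and one must check that the energy argument is nonetheless uniform on any fixed $[nm_0,\infty)$. In practice the coefficient is smooth and bounded on $[nm_0,\infty)$ for each fixed $n>2$, the sign of the first-order term is favorable (it is positive, corresponding to the mean curvature $trK \sim 2/r > 0$), so the energy estimate closes for each fixed $n$; the delicate bookkeeping is ensuring the weights $\delta \in (-1,-\tfrac12]$ are admissible simultaneously at the boundary (where one needs enough regularity to trace, Proposition \ref{prop-spaces}(d)) and at infinity (where one needs $\delta < -\tfrac12$ for the energy integral $\int r^{-2\delta-1}(\cdots)\,dr$ to see genuine decay). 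A secondary technical point is the commutator $[\slashed\Delta_{\gamma_{\mathbb{S}^2}}, \partial_r] = 0$ here because $\gamma_{\mathbb{S}^2}$ is $r$-independent — this is a real simplification special to the conformal Schwarzschild background and is what makes the clean algebraic reading-off of $\partial_r^2\tilde u$ legitimate; I would remark that for general asymptotically flat $g$ one picks up $r$-dependent angular metrics and genuine commutator terms, handled by the same energy scheme with extra lower-order error terms.
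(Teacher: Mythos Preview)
Your approach to the $\AH{}{}{}$ estimate is broadly viable and in spirit close to what the paper does, though the paper organizes it via spherical harmonic decomposition rather than commuting angular derivatives through the equation: each mode $a_{m\ell}(r)$ satisfies the Legendre-type ODE $r(r-2m_0)a'' + 2(r-m_0)a' - \ell(\ell+1)a = r(r-2m_0)b_{m\ell}$, and the paper proves a mode-wise energy estimate with explicit $\ell$-dependence (multiplying by $r^{-2\delta-1}a_{m\ell}$ and separately by $a'_{m\ell}$, then balancing the cross terms). The sharp $H^{k+1/2}$ boundary exponent --- which your sketch leaves implicit --- comes out of this balancing: the boundary cross term $c_{m\ell}\,a'_{m\ell}(nm_0)$ is controlled via an auxiliary estimate giving $|a'_{m\ell}(nm_0)| \lesssim [\ell(\ell+1)]^{1/4}\normmmH{a'_{m\ell}}{0}{\delta-1} + \cdots$, and the resulting $[\ell(\ell+1)]^{3/2}|c_{m\ell}|^2$ on the right produces the half-integer Sobolev exponent after summing over modes. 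Your version could presumably be made to yield the same thing, but the bookkeeping you describe as ``the boundary term at $r=nm_0$ produces the $H^{k+1/2}$ norm'' hides exactly this nontrivial step.

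There is, however, a genuine gap in your $\AC{}{}{}$ argument. Your plan is to recover $\sup_r$ bounds from the already-established $L^2$-in-$r$ bounds by the standard evolution-equation trick (integrate $\partial_r$ of the energy), but that trick produces a $C^0$ estimate whose right-hand side involves the \emph{$L^2$-in-$r$} norm of the forcing $F=\Delta_{g_{sc}}\tilde u$, i.e.\ $\norm{F}_{\AH{0}{k-1}{\delta-2}}$, not $\norm{F}_{\AC{0}{k-1}{\delta-2}}$. These are genuinely different: $\AC{0}{k-1}{\delta-2}$ does \emph{not} embed into $\AH{0}{k-1}{\delta-2}$, since $\|F(r)\|_{H^{k-1}}\le C r^{\delta-2}$ only gives $r^{-2(\delta-2)-1}\|F(r)\|^2 \le C^2 r^{-1}$, which is not integrable on $[nm_0,\infty)$. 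So you cannot close the $\AC{}{}{}$ estimate this way. The paper's proof of the $\AC{}{}{}$ bound is accordingly quite different and is the substantive missing ingredient in your proposal: after the spherical harmonic reduction it writes each $a_{m\ell}$ \emph{explicitly} by variation of parameters against the Legendre functions $P_\ell, Q_\ell$ (the homogeneous ODE becomes Legendre's equation under $z=r/m_0-1$), and then proves pointwise bounds
\[
z^{-\ell}|P_\ell(z)|\le C\Big(\tfrac{2z}{z+\sqrt{z^2-1}}\Big)^{-\ell},\qquad z^{\ell+1}|Q_\ell(z)|\le C\Big(\tfrac{2z}{z+\sqrt{z^2-1}}\Big)^{\ell},
\]
uniform in $\ell$, via large-degree asymptotics for Legendre functions (Olver, Shivakumar--Wong, Frenzen). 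Inserting these into the variation-of-parameters formula and estimating the resulting integrals yields the mode-wise $\sup$-norm estimate directly in terms of $\sup_r$ norms of $b_{m\ell}$ and $|c_{m\ell}|$, with the correct $\ell$-powers to sum to $H^{k+1}$ boundary data.
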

\begin{proof}
Since $\AHC{2}{k+1}{\delta}(M)$ is dense in both $\AH{2}{k+1}{\delta}(M)$ and $\AC{2}{k+1}{\delta}(M)$, it suffices to prove both estimates for all $\tilde u \in \AHC{2}{k+1}{\delta}(M)$. 

\vv

Let $\tilde u \in \mathcal{A}^{(2,k+1)}_{\delta}$. Define $F:= \Delta_{g_{sc}} \tilde u$ and $h:= \tilde u(nm_0)$. Then
\[F \in \mathcal{A}_{\delta-2}^{(0,k-1)}(M) = \LtoH{\delta-2}{k-1} \cap \CtoH{0}{\delta-2}{k-1},  \quad h \in H^{k+1}(\partial M)\]

\vv

We utilize the spherical symmetry of $(M,g_{sc})$ to reduce the equation $\Delta_{g_{sc}} \tilde u = F$ to differential equations on the coefficients of $\tilde u$ with respect to its spherical harmonics decomposition. Decompose $\tilde u, F,$ and $h$ as follows 

\begin{equation} \label{spherical coeff}  \tilde u (r,x) = \sum_{\ell=0}^{\infty} \sum_{m=-\ell}^{\ell} a_{m\ell}(r) Y_{m\ell}(x), \quad  F (r,x) = \sum_{\ell=0}^{\infty} \sum_{m=-\ell}^{\ell} b_{m\ell}(r) Y_{m\ell}(x), \quad  h(x) = \sum_{\ell=0}^{\infty} \sum_{m=-\ell}^{\ell} c_{m\ell} Y_{m\ell}(x)   \end{equation}

for $r \in [nm_0, \infty)$ and $x \in \partial M$. Here and below the spherical harmonics $Y_{m\ell}(x)$ are viewed as functions over the unit sphere $S^2$. These same functions will also be thought of over round spheres of any other radius via the natural push-forward map. We will assume that they are normalized with respect to the round metric $\gamma_{\mathbb{S}^2}$ on the unit sphere. 

\vv
We first rewrite the norms of the relevant Banach spaces in terms of the coefficients with respect to the spherical harmonics decomposition. 
For a nonnegative integer $s$ and $f \in H^s(\partial M)$ with spherical harmonic coefficients $f_{m\ell}$, the norm 

\begin{equation}
\norm{f}_{H^s(\partial M)} := \left( \sum_{\ell=0}^{\infty} \sum_{m=-\ell}^{\ell} \left[ 1+\ell(\ell+1)\right]^s |f_{m\ell}|^2 \right)^{1/2} 
\end{equation}
is equivalent to the standard norm on $H^s(\partial M)$. For a nonnegative integer $t$ and real number $\tau$, we will rewrite the norm on $\HtoH{t}{\tau}{s}(M)$ and $\CtoH{t}{\tau}{s}$. 

\vv

We begin with the norm on $\HtoH{t}{\tau}{s}(M)$. Given a function $v \in \HtoH{t}{\tau}{s}$, recall that

\begin{align}\label{norm-v}
\normmH{v}{t}{s}{\tau}^2 &= \sum_{t'=0}^t \int_{nm_0}^{\infty} r^{-2\delta-1+2t'} \norm{\partial_r^{t'} v(r)}_{H^s(S^2)}^2 dr 
\end{align}

Let $v_{m\ell} = v_{m\ell}(r)$ be the spherical harmonic coefficients for $v$. The first term in the sum becomes 

\begin{equation}
\int_{nm_0}^{\infty} r^{-2\delta-1} \norm{v(r)}_{H^s(S^2)}^2 dr = \sum_{\ell=0}^{\infty} \sum_{m=-\ell}^{\ell} \left[1+\ell(\ell+1)\right]^s  \int_{nm_0}^{\infty} r^{-2\delta-1} |v_{m\ell}(r)|^2 dr
\end{equation}

where we invoked the monotone convergence theorem to switch the order of the integral and the infinite sum. Now for each $1\leq t'\leq t$ and $r\in [nm_0, \infty)$, we have that 

\begin{equation}
\int_{S^2} Y_{m\ell} \,\, \partial_r^{t'} v(r) d\sigma_{S^2}  =  \partial_r^{t'} \int_{S^2} Y_{m\ell} \,\, v(r) d\sigma_{S^2}
\end{equation} 
since $\partial_r^{t'-1}$ lives in  $\HtoL{1}{loc}$. In light of the fact that $v_{m\ell}(r) = \int_{S^2} Y_{m\ell} \,\, v(r) d\sigma_{S^2}$, it follows that $v_{m\ell}$ are differentiable $t$ times in $r$ and $v_{m\ell}^{(t')}$ are the spherical harmonic coefficients of $\partial_r^{t'} v$. 

\vv

We can then rewrite the norm in equation \eqref{norm-v} as follows: 

\begin{equation} 
\normmm{v}{t}{s}{\tau}^2 = \sum_{\ell=0}^{\infty} \sum_{m=-\ell}^{\ell} \left[ 1+\ell(\ell+1)\right]^{s} \sum_{t'=0}^t \int_{nm_0}^{\infty} r^{-2\tau -1 +2t'} \left(v_{m\ell}^{(t')}(r)\right)^2 dr
\end{equation}

where we have repeatedly invoked the monotone convergence theorem to switch the order of the integral and the infinite sum. 
\vv

Now consider a function $w \in \CtoH{t}{\tau}{s}$. Recall that 

\begin{align}\label{norm-w}
\normmC{w}{t}{s}{\tau}^2 = \sum_{t'=0}^t \sup\left( r^{-2\tau +2t'} \norm{\partial_r^{t'} w(r)}_{H^s(S^2)}^2\right)
\end{align}

Letting $w_{m\ell} = w_{m\ell}(r)$ be the spherical harmonic coefficients for $w$, we have 

\begin{align}\label{norm-w}
\normmC{w}{t}{s}{\tau}^2 =  \sum_{\ell=0}^{\infty} \sum_{m=-\ell}^{\ell} [1+\ell(\ell+1)]^s\sum_{t'=0}^t \sup\left( r^{-2\tau +2t'} (w^{(t')}_{m\ell}(r))^2 \right)
\end{align}

It is then convenient to define the following norms for functions on $[nm_0, \infty)$: for a function $f_1 \in H^t_{loc}([nm_0,\infty)$ and $f_2 \in C^t([nm_0,\infty))$, define 

\begin{equation} \label{norm-sph}
\norm{f_1}_{H,t,\tau}^2 := \sum_{t'=0}^t \int_{nm_0}^{\infty} r^{-2\tau-1+2t'} \left(f_1^{(t')}(r)\right)^2 dr, \quad \norm{f_2}^2_{\mathcal{C},t,\tau} := \sum_{t'=0}^t \sup\left( r^{-2\tau +2t'} ({f_2}^{(t')}(r))^2 \right)
\end{equation}

We denote by $H^t_{\tau}([nm_0,\infty))$ and $C^t_{\tau}([nm_0,\infty))$ all functions $f_1\in H^t_{loc}([nm_0,\infty)$ and $f_2 \in C^t([nm_0,\infty))$  in which $\normmmH{f_1}{t}{\tau}<\infty$ and $\normmmC{f_2}{t}{\tau}<\infty$ respectively. 

\vv

\vv

Using the above notation, we can then write equation \eqref{norm-v} and \eqref{norm-w} as follows: 

\begin{equation}
\normmH{v}{t}{s}{\tau}^2 = \sum_{\ell=0}^{\infty} \sum_{m=-\ell}^{\ell} \left[ 1+\ell(\ell+1)\right]^{s} \normmmH{v_{m\ell}}{t}{\tau}^2, \quad \normmC{w}{t}{s}{\tau}^2 = \sum_{\ell=0}^{\infty} \sum_{m=-\ell}^{\ell} \left[ 1+\ell(\ell+1)\right]^{s} \normmmC{w_{m\ell}}{t}{\tau}^2
\end{equation}

\vvv

We now return to the statement of the lemma. Recall from equation \eqref{spherical coeff} that 

\begin{itemize} 
\item $\tilde u \in \mathcal{A}^{(2,k+1)}_{\delta}(M)$ with coefficients $a_{m\ell} \in H^2_{\delta}([nm_0,\infty)) \cap C^2_{\delta}([nm_0,\infty))$. 
\item $F:= \Delta_{g_{sc}} \tilde u \in \mathcal{A}^{(0,k-1)}_{\delta-2}(M)$ with coefficients $b_{m\ell} \in L^2_{\delta-2}([nm_0,\infty)) \cap C_{\delta-2}([nm_0,\infty))$. 
\item $h:= \tilde u(nm_0) \in H^{k+1}(S^2)$ with coefficients $c_{m\ell}$. 
\end{itemize}

To prove the lemma, it then suffices to show that there exist a constant $C>0$ independent of $m$ and $\ell$ such that
\begin{equation} \tag{ \textbf{H-Est}}
\begin{aligned} \label{estimate-aml H}
\normmmH{a''_{m\ell}}{0}{\delta-2}^2 + \left[ 1+\ell(\ell+1)\right] \normmmH{a'_{m\ell}}{0}{\delta-1}^2 + \left[ 1+\ell(\ell+1)\right]^2 \normmmH{a_{m\ell}}{0}{\delta}^2 \\ \leq C ( \normmmH{b_{m\ell}}{0}{\delta-2}^2 + \left[ 1+\ell(\ell+1)\right]^{3/2}|c_{m\ell}|^2) 
\end{aligned}
\end{equation}

\begin{equation}  \tag{ \textbf{C-Est}}
\begin{aligned} \label{estimate-aml C}
\normmmC{a''_{m\ell}}{0}{\delta-2}^2 + \left[ 1+\ell(\ell+1)\right] \normmmC{a'_{m\ell}}{0}{\delta-1}^2 + \left[ 1+\ell(\ell+1)\right]^2 \normmmC{a_{m\ell}}{0}{\delta}^2 \\ \leq C ( \normmmC{b_{m\ell}}{0}{\delta-2}^2 + \left[ 1+\ell(\ell+1)\right]^{2}|c_{m\ell}|^2) 
\end{aligned}
\end{equation}

for each $m$ and $\ell$. Indeed, if we multiply \textbf{H-Est} and \textbf{C-Est} by $[1+\ell(\ell+1)]^{k-1}$ and sum over $m$ and $\ell$, we get the two desired estimates in the statement of the lemma. We will demonstrate this for \textbf{H-Est}:  after multiplying \textbf{H-Est} by $[1+\ell(\ell+1)]^{k-1}$, we sum over $m$ and $\ell$ to get the following three estimates 
\begin{equation}
\begin{aligned}
&\normmH{\tilde u}{2}{k-1}{\delta}^2 \nonumber \\
& =  \sum_{\ell=0}^{\infty} \sum_{m=-\ell}^{\ell} \left[ 1+\ell(\ell+1)\right]^{k-1} \normmmH{a_{m\ell}}{2}{\delta}^2\\
&=\sum_{\ell=0}^{\infty} \sum_{m=-\ell}^{\ell} [1+\ell(\ell+1)]^{k-1} \left( \begin{split} \normmmH{a''_{m\ell}}{0}{\delta-2}^2 + \normmmH{a'_{m\ell}}{0}{\delta-1}^2 + \normmmH{a_{m\ell}}{0}{\delta}^2 \end{split} \right) \\
&\leq C \left( \sum_{\ell=0}^{\infty} \sum_{m=-\ell}^{\ell} [1+\ell(\ell+1)]^{k-1} \normmmH{b_{m\ell}}{0}{\delta-2}^2+ \sum_{\ell=0}^{\infty} \sum_{m=-\ell}^{\ell} [1+\ell(\ell+1)]^{k+1/2} |c_{m\ell}|^2 \right) \\
&= C\left( \norm{F}_{\AH{0}{k-1}{\delta-2}}^2 + \norm{h}_{H^{k+1/2}(S^2)}^2 \right) 
\end{aligned}
\end{equation}

\begin{equation}
\begin{aligned}
&\normmH{\tilde u}{1}{k}{\delta}^2 \nonumber \\
& =  \sum_{\ell=0}^{\infty} \sum_{m=-\ell}^{\ell} \left[ 1+\ell(\ell+1)\right]^{k} \normmmH{a_{m\ell}}{1}{\delta}^2\\
&=\sum_{\ell=0}^{\infty} \sum_{m=-\ell}^{\ell} [1+\ell(\ell+1)]^{k} \left( \begin{split}  \normmmH{a'_{m\ell}}{0}{\delta-1}^2 + \normmmH{a_{m\ell}}{0}{\delta}^2 \end{split} \right) \\
&\leq C \left( \sum_{\ell=0}^{\infty} \sum_{m=-\ell}^{\ell} [1+\ell(\ell+1)]^{k-1} \normmmH{b_{m\ell}}{0}{\delta-2}^2 + \sum_{\ell=0}^{\infty} \sum_{m=-\ell}^{\ell} [1+\ell(\ell+1)]^{k+1/2} |c_{m\ell}|^2 \right) \\
&= C\left( \norm{F}_{\AH{0}{k-1}{\delta-2}}^2 + \norm{h}_{H^{k+1/2}(S^2)}^2 \right) 
\end{aligned}
\end{equation}

\begin{equation}
\begin{aligned}
&\normmH{\tilde u}{0}{k+1}{\delta}^2  \nonumber \\
& =  \sum_{\ell=0}^{\infty} \sum_{m=-\ell}^{\ell} \left[ 1+\ell(\ell+1)\right]^{k+1} \normmmH{a_{m\ell}}{0}{\delta}^2 \\
&\leq C \left( \sum_{\ell=0}^{\infty} \sum_{m=-\ell}^{\ell} [1+\ell(\ell+1)]^{k-1} (\normmmH{b_{m\ell}}{0}{\delta-2}^2 + \sum_{\ell=0}^{\infty} \sum_{m=-\ell}^{\ell} [1+\ell(\ell+1)]^{k+1/2} |c_{m\ell}|^2 \right) \\
&= C\left( \norm{F}_{\AH{0}{k-1}{\delta-2}}^2 + \norm{h}_{H^{k+1/2}(S^2)}^2 \right) 
\end{aligned}
\end{equation}

It then follows that 

\begin{align} 
\norm{\tilde u}_{\AH{2}{k+1}{\delta}}&= \max \left\{ \normmH{u}{2}{k-1}{\delta}, \normmH{u}{1}{k}{\delta}, \normmH{u}{0}{k+1}{\delta}\right\}\\
&\leq  C\left( \norm{F}_{\AH{0}{k-1}{\delta-2}}^2 + \norm{h}_{H^{k+1/2}(S^2)}^2 \right) 
\end{align}
as needed. 

\vvv

The rest of the proof is then devoted to prove estimates \ref{estimate-aml H} and \ref{estimate-aml C}. 

\vv

We first introduce a piece of notation. Given 2 quantities $\alpha, \beta$, we will write $\alpha \lesssim \beta$ if there exists a constant $C>0$ depending only on $n$, $m_0$ and $\delta$, such that $\alpha \leq C \beta$. In particular, the constant will not depend on $\tilde u$, $F$, $h$, $m$, and $\ell$. In this notation, the estimates that we will be proving are

\begin{equation} \tag{ \textbf{H-Est}}
\begin{aligned}
\normmmH{a''_{m\ell}}{0}{\delta-2}^2 + \left[ 1+\ell(\ell+1)\right] \normmmH{a'_{m\ell}(r)}{0}{\delta-1}^2 + \left[ 1+\ell(\ell+1)\right]^2 \normmmH{a_{m\ell}(r)}{0}{\delta}^2 \\ \lesssim \normmmH{b_{m\ell}}{0}{\delta-2}^2 + \left[ 1+\ell(\ell+1)\right]^{3/2}|c_{m\ell}|^2
\end{aligned}
\end{equation}

\begin{equation}  \tag{ \textbf{C-Est}}
\begin{aligned}
\normmmC{a''_{m\ell}}{0}{\delta-2}^2 + \left[ 1+\ell(\ell+1)\right] \normmmC{a'_{m\ell}(r)}{0}{\delta-1}^2 + \left[ 1+\ell(\ell+1)\right]^2 \normmmC{a_{m\ell}(r)}{0}{\delta}^2 \\ \lesssim \normmmC{b_{m\ell}}{0}{\delta-2}^2 + \left[ 1+\ell(\ell+1)\right]^{2}|c_{m\ell}|^2
\end{aligned}
\end{equation}
for every $m$ and $\ell$. 

\vv

The relation between $\tilde u$, $F$ and $h$, namely $F = \Delta_{g_{sc}} \tilde u$ and $h = \tilde u(nm_0)$, imply the following differential equation on the coefficients $a_{m\ell}$, $b_{m\ell}$, and $c_{m\ell}$: 

\begin{equation}
\begin{cases} \label{ode-aml}
r(r-2m_0) a''_{m\ell} (r) + 2(r-m_0) a'_{m\ell}(r) - \ell(\ell+1)a_{m\ell}(r) = r(r-2m_0)b_{m\ell}(r), & r\in [nm_0, \infty)\\
a_{m\ell}(nm_0) = c_{m\ell} &
\end{cases}
\end{equation}

We first consider the case $\ell=0$. we integrate once the above differential equation to get

\begin{equation} \label{a00}
r(r-2m_0)a_{00}'(r) = \int_{nm_0}^{r} s(s-2m_0) b_{00}(s) ds + n(n-2)m_0^2 a'_{00}(nm_0)
\end{equation}
which immediately gives the following estimate on $\normmmC{a'_{00}}{0}{\delta-1}$: 

\begin{equation}\label{a00-3}
\normmmC{a_{00}'}{0}{\delta-1} \lesssim \normmmC{b_{00}}{0}{\delta-2} + |a_{00}'(nm_0)|
\end{equation}

To estimate $\normmmH{a'_{00}}{0}{\delta-1}$, we use equation \eqref{a00} to get 

\begin{align}
\int_{nm_0}^{\infty}r^{-2\delta+1} (a'_{00}(r))^2 dr &\lesssim \int_{nm_0}^{\infty} \frac{r^{-2\delta+1}}{r^2(r-2m_0)^2} \left( \int_{nm_0}^{r} s(s-2m_0) b_{00}(s) ds \right)^2 dr + |a'_{00}(nm_0)|^2 \\
&\lesssim \int_{nm_0}^{\infty} r^{-2\delta+3} (b_{00}(r))^2 dr + |a'_{00}(nm_0)|^2
\end{align}

where Hardy's inequality was used in the last line. We then conclude the following estimate on $\normmmH{a'_{00}}{0}{\delta-1}$: 

\begin{equation} \label{a00-3H}
\normmmH{a'_{00}}{0}{\delta-1} \lesssim \normmmH{b_{00}}{0}{\delta-2} + |a'_{00}(nm_0)|
\end{equation}

\vv

We divide equation \eqref{a00} by $r(r-2m_0)$ and integrate to get 

\begin{equation}\label{a00-2}
a_{00}(r) = c_{00} + \int_{nm_0}^r \frac{1}{s(s-2m_0)} \int_{nm_0}^s s'(s'-2m_0)b_{00}(s')ds'ds + a_{00}'(nm_0)\ln\left( \frac{n(r-2m_0)}{(n-2)r}\right) \frac{n(n-2)m_0}{2}
\end{equation}

which, in the same manner as for $a'_{00}$, gives the following estimates: 

\begin{equation}\label{a00-4}
\normmmC{a_{00}}{0}{\delta} \lesssim |c_{00}| + \normmmC{b_{00}}{0}{\delta-2} + |a_{00}'(nm_0)|
\end{equation}

\begin{equation}\label{a00-4H}
\normmmH{a_{00}}{0}{\delta} \lesssim |c_{00}| + \normmmH{b_{00}}{0}{\delta-2} + |a_{00}'(nm_0)|
\end{equation}

\vv

Using the fact that $a_{00}$ vanishes at infinity, we take the limit as $r$ goes to infinity in equation \eqref{a00-2} to get the following expression for $a_{00}'(nm_0)$ in terms of $c_{00}$ and $b_{00}$: 

\begin{equation}
a_{00}'(nm_0) = \frac{2}{n(n-2) \ln\left( \frac{n}{n-2}\right)} \left( -c_{00} - \int_{nm_0}^{\infty} \frac{1}{r(r-2m_0)} \int_{nm_0}^r s(s-2m_0)b_{00}(s)dsdr\right)
\end{equation}

which allows us to estimate $a_{00}'(nm_0)$ to get

\begin{equation} \label{a00-6}
|a_{00}'(nm_0)| \lesssim |c_{00}| + \normmmH{b_{00}}{0}{\delta-2}
\end{equation}
\begin{equation} \label{a00-6 C}
|a_{00}'(nm_0)| \lesssim |c_{00}| + \normmmC{b_{00}}{0}{\delta-2}
\end{equation}

\vv

We now get an estimate for $a_{00}''(r)$. Using the ODE in \eqref{ode-aml} we isolate for $a_{00}''(r)$ to get: 

\begin{equation} 
a_{00}''(r) =  b_{00}(r) - \frac{2(r-m_0)}{r(r-2m_0)} a_{00}'(r)
\end{equation}

It then follows that 

\begin{equation}\label{a00-5}
\normmmC{a_{00}''}{0}{\delta-2} \lesssim \normmmC{b_{00}}{0}{\delta-2} + \normmmC{a_{00}'}{0}{\delta-1}
\end{equation}
\begin{equation}\label{a00-5H}
\normmmH{a_{00}''}{0}{\delta-2} \lesssim \normmmH{b_{00}}{0}{\delta-2} + \normmmH{a_{00}'}{0}{\delta-1}
\end{equation}

Combining the estimates for $a_{00}, a_{00}'$ and $a_{00}''$ in equations \eqref{a00-4}, \eqref{a00-4H}, \eqref{a00-3}, \eqref{a00-3H}, \eqref{a00-5} and \eqref{a00-5H} together with the estimates for $|a_{00}'(nm_0)|$ in equations \eqref{a00-6} and \eqref{a00-6 C}, we finally deduce the desired estimates:

\begin{align}
\normmmH{a_{00}}{2}{\delta} \lesssim |c_{00}| + \normmmH{b_{00}}{0}{\delta-2}
\end{align}
\begin{align}
\normmmC{a_{00}}{2}{\delta} \lesssim |c_{00}| + \normmmC{b_{00}}{0}{\delta-2}
\end{align}
\vvv

We now deal with the case $\ell\geq1$.

\subsection*{Proving H-Est for $\ell\geq 1$}

We multiply both sides of the differential equation in \eqref{ode-aml} by $r^{-2\delta-1}a_{m\ell}(r)$ and integrate by parts to obtain: 

\begin{align} 
&  \int_{nm_0}^{\infty} r^{-2\delta-1} r(r-2m_0)a_{m\ell}'^2(r) dr +\int_{nm_0}^{\infty} \left[ \ell(\ell+1)+(2\delta+1) \left( \delta- \frac{m_0(2\delta+1)}{r}\right) \right]r^{-2\delta-1}a_{m\ell}^2(r) dr \nonumber\\
& =-\int_{nm_0}^{\infty} r^{-2\delta-1} a_{m\ell}(r)b_{m\ell}(r) dr - (nm_0)^{-2\delta} m_0(n-2) c_{m\ell} a'_{m\ell}(nm_0) +\frac{-2\delta-1}{2} (nm_0)^{-2\delta-1} m_0(n-2) c_{m\ell}^2 \label{intparts-aml}
\end{align}

We observe that for $\ell\geq1$,

\begin{equation}
 \ell(\ell+1)+(2\delta+1) \left( \delta- \frac{m_0(2\delta+1)}{r}\right) > \ell(\ell+1) -\frac{1}{2}
\end{equation}

and, hence, we have that 

\begin{align}
\normmmH{a'_{m\ell}}{0}{\delta-1}^2 + [1+\ell(\ell+1)]\normmmH{a_{m\ell}}{0}{\delta}^2 \lesssim  & \int_{nm_0}^{\infty} r^{-2\delta-1} r(r-2m_0)a_{m\ell}'^2(r) dr \nonumber \\
&+\int_{nm_0}^{\infty} \left[ \ell(\ell+1)+(2\delta+1) \left( \delta- \frac{m_0(2\delta+1)}{r}\right) \right]r^{-2\delta-1}a_{m\ell}^2(r) dr 
\end{align}

Deriving an upper bound for the expression in right hand side of equation \eqref{intparts-aml} will then lead to an upper bound for $\normmmH{a'_{m\ell}}{0}{\delta-1}^2 + [1+\ell(\ell+1)]\normmmH{a_{m\ell}}{0}{\delta}^2$. 

\vv

We obtain an estimate for $a'_{m\ell}(nm_0)$ by multiplying equation \eqref{ode-aml} by $a'_{m\ell}(r)$ and integrating by parts to get 

\begin{equation}
\int_{nm_0}^{\infty} (r-m_0) a_{m\ell}'^2(r) dr + \ell(\ell+1)\frac{1}{2} c_{m\ell}^2 - \frac{1}{2} nm_0^2(n-2) {a}_{m\ell}'^2(nm_0) = \int_{nm_0}^{\infty}r(r-2m_0) a_{m\ell}'(r) b_{m\ell}(r) dr 
\end{equation}

where we used the fact that $a_{m\ell}'^2(r) r(r-2m_0) = o(1)$ and $a_{m\ell}^2(r) = o(1)$. By estimating the right side of the above equation as follows 

\begin{equation}
\left| \int_{nm_0}^{\infty} r(r-2m_0) a_{m\ell}'(r) b_{m\ell}(r) dr \right| \lesssim [\ell(\ell+1)]^{1/2} \normmmH{a_{m\ell}'}{0}{\delta-1}^2 + [\ell(\ell+1)]^{-1/2}\normmmH{b_{m\ell}}{0}{\delta-2}^2,
\end{equation}
we deduce that


\begin{equation}
|a_{m\ell}'(nm_0)| \lesssim [\ell(\ell+1)]^{1/4}\normmmH{a'_{m\ell}}{0}{\delta-1} +[\ell(\ell+1)]^{-1/4}\normmmH{b_{m\ell}}{0}{\delta-2} + [\ell(\ell+1)]^{1/2} |c_{m\ell}|
\end{equation}

\vvv

 
 We can now estimate the right hand side of equation \eqref{intparts-aml} to get 

\begin{align}
&-\int_{nm_0}^{\infty} r^{-2\delta-1} a_{m\ell}(r)b_{m\ell}(r) dr - (nm_0)^{-2\delta} m_0(n-2) c_{m\ell} a'_{m\ell}(nm_0) +\frac{-2\delta-1}{2} (nm_0)^{-2\delta-1} m_0(n-2) c_{m\ell}^2 \nonumber \\
&\lesssim \normmmH{a_{m\ell}}{0}{\delta} \normmmH{b_{m\ell}}{0}{\delta-2} + |c_{m\ell}| \left( [\ell(\ell+1)]^{1/4}\normmmH{a'_{m\ell}}{0}{\delta-1} +[\ell(\ell+1)]^{-1/4}\normmmH{b_{m\ell}}{0}{\delta-2} + [\ell(\ell+1)]^{1/2} |c_{m\ell}|\right)\nonumber \\ &\qquad  + c_{m\ell}^2 
\end{align}

We estimate each term appearing in the right hand side of the above equation. Let $\epsilon >0$ that will be chosen to be small later on. Then we have  

\begin{equation}
 \normmmH{a_{m\ell}}{0}{\delta} \normmmH{b_{m\ell}}{0}{\delta-2} \leq \epsilon \ell(\ell+1)  \normmmH{a_{m\ell}}{0}{\delta}^2 + \frac{D(\epsilon)}{\ell(\ell+1)} \normmmH{b_{m\ell}}{0}{\delta-2}^2
\end{equation}

\begin{equation}
c_{m\ell}  \normmmH{b_{m\ell}}{0}{\delta-2} \leq \frac{[\ell(\ell+1)]^{3/4}}{2} c_{m\ell}^2 + \frac{1}{2[\ell(\ell+1)]^{3/4}} \normmmH{b_{m\ell}}{0}{\delta-2}^2 
\end{equation}


\begin{equation}
c_{m\ell} \normmmH{a'_{m\ell}}{0}{\delta-1} \leq \epsilon [\ell(\ell+1)]^{-1/4} \normmmH{a'_{m\ell}}{0}{\delta-1}^2 + D(\epsilon) [\ell(\ell+1)]^{1/4} c_{m\ell}^2 
\end{equation}

where $D=D(\epsilon)$ is a constant depending on $\epsilon$. 

\vv

Combining the above, we get 

\begin{align} 
&\normmmH{a'_{m\ell}}{0}{\delta-1}^2 + [1+\ell(\ell+1)]\normmmH{a_{m\ell}}{0}{\delta}^2  \\
&\leq C\epsilon \left( \ell(\ell+1) \normmmH{a_{m\ell}}{0}{\delta}^2 + \normmmH{a'_{m\ell}}{0}{\delta-1}^2 \right) + C D(\epsilon) \left( [\ell(\ell+1)]^{1/2} c_{m\ell}^2 + \frac{1}{\ell(\ell+1)}\normmmH{b_{m\ell}}{0}{\delta-2}^2 \right) 
\end{align}
for some constant $C>0$ that only depends on $n$, $m_0$, and $\delta$. Choosing $\epsilon$ to be small, we can absorb the expression multiplied to $C\epsilon$ in the above equation to the left hand side to finally deduce 

\begin{align}
\normmmH{a'_{m\ell}}{0}{\delta-1}^2 + [1+\ell(\ell+1)]\normmmH{a_{m\ell}}{0}{\delta}^2 \lesssim [1+\ell(\ell+1)]^{-1} \normmmH{b_{m\ell}}{0}{\delta-2}^2 + [1+\ell(\ell+1)]^{1/2} c_{m\ell}^2 \label{est2}
\end{align}

which is the desired estimate for $\normmmH{a'_{m\ell}}{0}{\delta-1}$ and $\normmmH{a_{m\ell}}{0}{\delta}$. 

\vv

What is left is to estimate $\normmmH{a''_{m\ell}}{0}{\delta-2}$. We use the ODE in \eqref{ode-aml} to get 

\begin{equation}
r^{-2\delta-1} r^2(r-2m_0)^2 \left(a''_{m\ell}(r) \right)^2 = r^{-2\delta-1} \bigg[ b_{m\ell}(r) + \ell(\ell+1)a_{m\ell}(r) - 2(r-m_0) a'_{m\ell}(r) \bigg]^2
\end{equation}

We can then estimate

\begin{align}
\normmmH{a''_{m\ell}}{0}{\delta-2}^2 &\lesssim \normmmH{b_{m\ell}}{0}{\delta-2}^2 + [1+\ell(\ell+1)]^2 \normmmH{a_{m\ell}}{0}{\delta}^2 + \normmmH{a'_{m\ell}}{0}{\delta-1}^2 + \ell(\ell+1) \normmmH{b_{m\ell}}{0}{\delta-2} \normmmH{a_{m\ell}}{0}{\delta}\nonumber \\  &\qquad + \normmmH{a'_{m\ell}}{0}{\delta-1} \normmmH{b_{m\ell}}{0}{\delta-2} + \ell(\ell+1) \normmmH{a_{m\ell}}{0}{\delta} \normmmH{a'_{m\ell}}{0}{\delta-1} \\
&\lesssim [1+\ell(\ell+1)] \normmmH{a'_{m\ell}}{0}{\delta-1}^2 + [1+\ell(\ell+1)]^2 \normmmH{a_{m\ell}}{0}{\delta}^2 + \normmmH{b_{m\ell}}{0}{\delta-2}^2\\
&\lesssim [1+\ell(\ell+1)]^{3/2} c_{m\ell}^2 + \normmmH{b_{m\ell}}{0}{\delta-2}^2
\end{align}
where we used equation \eqref{est2} in the last line. Combining the above equation with equation \eqref{est2}, we finally get the desired estimate {\bf H-Est}: 
\begin{equation}
\normmmH{a''_{m\ell}}{0}{\delta-2}^2 + [1+\ell(\ell+1)]\normmmH{a'_{m\ell}}{0}{\delta-1}^2 + [1+\ell(\ell+1)]^2 \normmmH{a_{m\ell}}{0}{\delta}^2 \lesssim \normmmH{b_{m\ell}}{0}{\delta-2}^2 + [1+\ell(\ell+1)]^{3/2} c_{m\ell}^2
\end{equation}

\vv

\subsection*{Proving C-Est for $\ell\geq 1$} 

For $r\in [nm_0,\infty)$ define

\[ z:=\frac{r}{m_0} - 1, \quad R:= n-1, \quad h_{m\ell}(z) := a_{m\ell}(r), \quad f_{m\ell}(z) := r(r-2m_0) b_{m\ell}(r)  \]

Note that $R>1$ since $n>2$. The desired estimate in {\bf C-Est} in terms of $h$ is then 
 
\begin{equation}
\normmmC{h''_{m\ell}}{0}{\delta-2}^2 + [1+\ell(\ell+1)]\normmmC{h'_{m\ell}}{0}{\delta-1}^2 + [1+\ell(\ell+1)]^2 \normmmC{h_{m\ell}}{0}{\delta}^2 \lesssim \normmmC{f_{m\ell}}{0}{\delta}^2 + [1+\ell(\ell+1)]^2 c_{m\ell}^2  \tag{\bf C-Est$'$}
\end{equation}

The IVP for $a_{m\ell}$ in \eqref{ode-aml} becomes

\begin{equation}
\begin{cases} \label{ode-hml}
(z^2-1) h''_{m\ell} (z) + 2z h'_{m\ell}(z) - \ell(\ell+1)h_{m\ell}(z) = f_{m\ell}(z), & z\in [R, \infty)\\
h_{m\ell}(R) = c_{m\ell} &\\
h_{m\ell} \in H^2_{\delta}([R,\infty)) \cap C^2_{\delta}([R,\infty))
\end{cases}
\end{equation}

The above ODE is the Legendre differential equation; the Legendre functions of the first and second kind, $P_{\ell}$ and $Q_{\ell}$, are two linearly independent solutions to the homogeneous equation in \eqref{ode-hml} (i.e. with $f_{m\ell} = 0$) satisfying the following asymptotics as $z\to \infty$ (see \cite{olver} chapter 5 section 12): 

\begin{equation}
P_{\ell}(z) = O(z^{\ell}), \quad Q_{\ell}(z) = O(z^{-\ell-1})
\end{equation}
We will frequently use some properties of those functions discussed and proved in section \ref{legendre appendix} in the Appendix.

\vv

We normalize $P_{\ell}$ and $Q_{\ell}$ so that 

\begin{equation}
\lim_{z\to \infty} z^{-\ell} P_{\ell}(z) = 1, \quad \lim_{z\to \infty} z^{\ell+1} Q_{\ell}(z) = 1
\end{equation}

Using the method of Frobenius, we can expand $P_{\ell}$ and $Q_{\ell}$ as a sum of powers of $z$ on $[R,\infty)$, which we present in proposition \ref{frobenius} in the Appendix. We rewrite the proposition here for convenience. 

\begin{prop}
$P_{\ell}$ and $Q_{\ell}$ admit an expansion of the following form. For $z>1$, 
\begin{equation}
P_{\ell}(z) = \sum_{k=0}^\ell a_k z^{\ell-k}, \quad Q_{\ell}(z) = \sum_{k=0}^{\infty} b_k z^{-\ell-1-k}
\end{equation}

where the coefficients $a_k$ and $b_k$ are defined recursively as follows:

\[ a_0 = b_0 = 1, \quad a_1 = b_1 = 0\]
\[\text{for $k\geq 2$}, \quad a_{k} = \frac{(\ell-k+2)(\ell-k+1)}{k^2-k(2\ell+1)}a_{k-2}, \quad b_{k} = \frac{(\ell+k-1)(\ell+k)}{k(2\ell+k+1)}b_{k-2}  \]
\end{prop}

We observe immediately from the above that $Q_{\ell}(z)$ is positive and $z^{\ell+1}Q_{\ell}(z)$ is decreasing on $[R,\infty)$.

\vv

Using the variation of parameters method (see \cite{advancedmathmethods}), we can explicitly write the solution to \eqref{ode-hml}: 
\begin{equation} \label{hml}
h_{m\ell}(z) = A Q_{\ell}(z) + P_{\ell}(z) \int_{z}^\infty Q_{\ell}(t) f_{m\ell}(t) [ W(t) (t^2-1) ]^{-1} dt + Q_{\ell}(z) \int_{R}^z P_{\ell}(t) f_{m\ell}(t) [ W(t) (t^2-1) ]^{-1} dt 
\end{equation}

where $W(t) := P_{\ell}(t) Q_{\ell}'(t) - P_{\ell}'(t) Q_{\ell}(t)$ is the Wronskian and $A$ is defined by 
\begin{equation} \label{A}
A = \frac{1}{Q_{\ell}(R)} \left( c_{m\ell} - P_{\ell}(R) \int_{R}^\infty Q_{\ell}(t) f_{m\ell}(t) [ W(t) (t^2-1) ]^{-1} dt\right) 
\end{equation}
Note that $W(t)(t^2-1)$ is a constant by Lagrange's identity (see \cite{lagrange's-identity} pg 354). We compute that constant to be $2\ell+1$ by taking the limit as $z$ goes to $\infty$ in the expansion of $P_{\ell}$ and $Q_{\ell}$. 

\vv

We summarize here some estimates on the Legendre functions uniform in $z$ and $\ell$ that we prove in the Appendix (check proposition \ref{prop-estPQ}): 
There exists a constant $C = C(R)$ such that for any $\ell\geq 1$ and $z\in [R,\infty)$, the following holds 

\begin{equation} \label{estPQ}
z^{-\ell}|P_{\ell}(z)| \leq  C \left(\frac{2z}{z+\sqrt{z^2-1}} \right)^{-\ell},\qquad z^{\ell+1} |Q_{\ell}(z)| \leq C\left(\frac{2z}{z+\sqrt{z^2-1}} \right)^{\ell}
\end{equation}
\begin{equation}\label{estPQ'}
z^{-(\ell-1)}|P'_{\ell}(z)| \leq C \ell \left(\frac{2z}{z+\sqrt{z^2-1}} \right)^{-\ell}, \qquad z^{\ell+2} |Q'_{\ell}(z)| \leq C \ell \left(\frac{2z}{z+\sqrt{z^2-1}} \right)^{\ell}
\end{equation}

We note that the function $\frac{2z}{z+\sqrt{z^2-1}}$ is decreasing on $[R,\infty)$ and is bounded below and above by $1$ and $2$ respectively. Using the expression for $h_{m\ell}$ in equation \eqref{hml} and the uniform bounds of $P_{\ell}$ and $Q_{\ell}$ in equation \eqref{estPQ}, we obtain

\begin{align}
 z^{-\delta}|h_{m\ell}(z)| &\leq z^{-\delta}|A| |Q_{\ell}(z)| +z^{-\delta} \frac{1}{2\ell+1}|P_{\ell}(z)| \int_{z}^{\infty} |Q_{\ell}(t)| |f_{m\ell}(t)| dt + z^{-\delta}\frac{1}{2\ell+1}|Q_{\ell}(z)| \int_{R}^z |P_{\ell}(t)| |f_{m\ell}(t)| dt \\
&\leq z^{-\delta} |A| Q_{\ell}(R) R^{\ell+1} z^{-\ell-1} \nonumber\\ &\qquad+ \frac{C^2}{2\ell+1} \left(\sup_{t\geq R} t^{-\delta} |f_{m\ell}(t)| \right) z^{\ell-\delta} \left(\frac{2z}{z+\sqrt{z^2-1}} \right)^{-\ell} \int_{z}^{\infty} \left(\frac{2t}{t+\sqrt{t^2-1}} \right)^{\ell} t^{-\ell-1}t^{\delta} dt \nonumber\\ &\qquad  +\frac{C^2}{2\ell+1} \left(\sup_{t\geq R} t^{-\delta} |f_{m\ell}(t)| \right) z^{-\ell-1-\delta} \left(\frac{2z}{z+\sqrt{z^2-1}} \right)^{\ell}\int_{R}^{z} \left(\frac{2t}{t+\sqrt{t^2-1}} \right)^{-\ell}t^{\ell}t^{\delta} dt\\
&\leq |A|Q_{\ell}(R) R^{-\delta} + \frac{C^2}{2\ell+1} \left(\sup_{t\geq R} t^{-\delta} |f_{m\ell}(t)| \right) \left(\frac{1}{\ell-\delta}+ \frac{1}{\ell+1+\delta}\left(1-(\frac{R}{z})^{\ell+\delta+1}\right) \right)
\end{align} 

It immediately follows that

\begin{equation} \label{est-hml}
\normmmC{h_{m\ell}}{0}{\delta} \lesssim |A|Q_{\ell}(R) + \ell^{-2} \normmmC{f_{m\ell}}{0}{\delta}
\end{equation}

To derive an estimate for $|A|Q_{\ell}(R)$, we use equation \eqref{A} to get

\begin{align}
|A|Q_{\ell}(R) &\leq |c_{m\ell}| + \frac{C^2}{2\ell+1} \left( \sup_{t\geq R} t^{-\delta} |f_{m\ell}(t)|\right) R^{\ell} \left(\frac{2R}{R+\sqrt{R^2-1}} \right)^{-\ell} \int_{R}^{\infty} \left(\frac{2t}{t+\sqrt{t^2-1}} \right)^{\ell} t^{-\ell-1}t^{\delta} dt  \label{A1}\\
&\leq  |c_{m\ell}| + \frac{C^2R^{\delta}}{(2\ell+1)(\ell-\delta)} \left( \sup_{t\geq R} t^{-\delta} |f_{m\ell}(t)|\right) \label{A2}
\end{align}
The above estimate for $|A|Q_{\ell}(R)$ together with equation \eqref{est-hml} implies 

\begin{equation} \label{est-hml2}
\normmmC{h_{m\ell}}{0}{\delta} \lesssim |c_{m\ell}| + \ell^{-2} \normmmC{f_{m\ell}}{0}{\delta}
\end{equation}

To achieve an estimate for $\normmmC{h_{m\ell}'}{0}{\delta-1}$, we take the derivative of equation \eqref{hml} to get

\begin{equation}
h_{m\ell}'(z) = A Q_{\ell}'(z) + \frac{1}{2\ell+1}P_{\ell}'(z) \int_{z}^\infty Q_{\ell}(t) f_{m\ell}(t) dt + \frac{1}{2\ell+1}Q_{\ell}'(z) \int_{R}^z P_{\ell}(t) f_{m\ell}(t) dt 
\end{equation}
In a similar manner, we will apply the uniform bounds on $P_{\ell}$ and $Q_{\ell}$ in equations \eqref{estPQ} and \eqref{estPQ'} to obtain the desired estimate for $\normmmC{h_{m\ell}'}{0}{\delta-1}$. Using the above equation for $h_{m\ell}'$ as well as equations \eqref{estPQ} and \eqref{estPQ'}, we get

\begin{align}
 z^{-\delta+1}|h_{m\ell}'(z)| &\leq z^{-\delta+1}|A| |Q_{\ell}'(z)| + z^{-\delta+1}\frac{1}{2\ell+1}|P_{\ell}'(z)| \int_{z}^{\infty} |Q_{\ell}(t)| |f_{m\ell}(t)| dt + z^{-\delta+1}\frac{1}{2\ell+1}|Q_{\ell}'(z)| \int_{R}^z |P_{\ell}(t)| |f_{m\ell}(t)| dt \\
&\leq  z^{-\delta+1}|A| |Q_{\ell}'(z)| + \frac{C^2 \ell}{2\ell+1} \left(\sup_{t\geq R} t^{-\delta} |f_{m\ell}(t)| \right) z^{\ell-\delta} \left(\frac{2z}{z+\sqrt{z^2-1}} \right)^{-\ell}\int_{z}^{\infty} \left(\frac{2t}{t+\sqrt{t^2-1}} \right)^{\ell}t^{-\ell-1}t^{\delta} dt \nonumber\\ &\qquad  +\frac{C^2\ell}{2\ell+1} \left(\sup_{t\geq R} t^{-\delta} |f_{m\ell}(t)| \right) z^{-\ell-1-\delta} \left(\frac{2z}{z+\sqrt{z^2-1}} \right)^{\ell} \int_{R}^{z} \left(\frac{2t}{t+\sqrt{t^2-1}} \right)^{-\ell} t^{\ell}t^{\delta} dt\\
&\leq  z^{-\delta+1} |A| |Q_{\ell}'(z)| + \frac{C^2\ell}{2\ell+1} \left(\sup_{t\geq R} t^{-\delta} |f_{m\ell}(t)| \right) \left(\frac{1}{\ell-\delta}+ \frac{1}{\ell+1+\delta}\left(1-(\frac{R}{z})^{\ell+\delta+1}\right) \right) \label{est for h'}
\end{align} 

We estimate the term $ z^{-\delta+1} |A| |Q_{\ell}'(z)|$. First, we observe from the expansion of $Q_{\ell}(z)$ in proposition \ref{frobenius} that $z^{\ell+2}Q_{\ell}'(z)$ is negative and increasing on $[R,\infty)$, which in particular implies that 

\begin{align}
z^{-\delta+1} |A||Q_{\ell}'(z)| &\leq z^{-\delta+1} |A| |Q_{\ell}'(R)| R^{\ell+2} z^{-\ell-2}\\
&\leq z^{-\delta-1} \left( \frac{R}{z} \right)^{\ell} R^2 |A| |Q_{\ell}'(R)|\\
&\leq R^{-\delta+1} |A| |Q_{\ell}'(R)| \label{A for Q'}
\end{align}

We then use the recursive relation for $Q_{\ell}$ in equation \eqref{recursive} to deduce that 

\begin{align}
|Q_{\ell}'(R)| &= \frac{\ell}{R^2-1} (-R Q_{\ell}(R) + Q_{\ell-1}(R)) \\
&< \frac{\ell}{R^2-1}  Q_{\ell-1}(R)
\end{align}
which, in light of equations \eqref{A for Q'} and \eqref{A2}, implies that 

\begin{align}
z^{-\delta-1} |A| |Q_{\ell}'(z)| &\leq \frac{\ell R^{-\delta+1}}{R^2-1} |A| Q_{\ell-1}(R)\\
 &\leq \frac{\ell R^{-\delta+1}}{R^2-1} \left( |c_{m\ell}| + \frac{C^2R^{\delta}}{(2\ell-1)(\ell-1-\delta)} \left( \sup_{t\geq R} t^{-\delta} |f_{m\ell}(t)|\right) \right)
\end{align}
The above together with equation \eqref{est for h'} finally lead to the desired estimate for $\normmmC{h_{m\ell}'}{0}{\delta-1}$: 

\begin{equation} \label{est-hml3}
\normmmC{h_{m\ell}'}{0}{\delta-1} \lesssim \ell |c_{m\ell}| + \ell^{-1} \normmmC{f_{m\ell}}{0}{\delta}
\end{equation}

What is left is estimating $\normmmC{h_{m\ell}''}{0}{\delta-2}$. Using the ODE satisfied by $h_{m\ell}$ in \eqref{ode-hml}, we have 
\begin{equation}
z^{2-\delta} h''_{m\ell} (z) =  - 2\frac{z^{3-\delta}}{z^2-1} h'_{m\ell}(z) + \frac{\ell(\ell+1)z^{2-\delta}}{z^2-1}h_{m\ell}(z) +\frac{z^{2-\delta}}{z^2-1} f_{m\ell}(z)
\end{equation}

which then, using equations \eqref{est-hml2} and \eqref{est-hml3} implies 

\begin{align}
\normmmC{h_{m\ell}''}{0}{\delta-2} &\lesssim \normmmC{h_{m\ell}'}{0}{\delta-1}+\ell(\ell+1) \normmmC{h_{m\ell}}{0}{\delta} + \normmmC{f_{m\ell}}{0}{\delta}\\
&\lesssim \ell^2 |c_{m\ell}| + \normmmC{f_{m\ell}}{0}{\delta}
\end{align}

The above equation together with equations \eqref{est-hml2} and \eqref{est-hml3} finally imply the desired estimate: 

\begin{equation}
\normmmC{h''_{m\ell}}{0}{\delta-2}^2 + [1+\ell(\ell+1)]\normmmC{h'_{m\ell}}{0}{\delta-1}^2 + [1+\ell(\ell+1)]^2 \normmmC{h_{m\ell}}{0}{\delta}^2 \lesssim \normmmC{f_{m\ell}}{0}{\delta}^2 + [1+\ell(\ell+1)]^2 c_{m\ell}^2  \tag{\bf C-Est$'$}
\end{equation}

\vvv

This concludes the proof of the lemma. 

\end{proof}

\section{Reduction of the problem} \label{reduction chapter}

In this section, we reduce the static Einstein vacuum equations into a simpler system involving ODEs, the Laplace equation on $M$, and first order partial differential equations on $\partial M$. 

\vv

Let $g$ be a metric on $M$ of the form $dr^2+g(r)$, where $g(r)$ is a metric on $S^2$ for each $r\in[nm_0,\infty)$. The level sets of the function $r$ defines a foliation with leaves denoted by $S_r$. We define the unit vector field $n:=\dd{r}$ that is normal to the foliation. We denote by $\slashed \nabla$ and $\cancel{div}$ the covariant derivative and divergence with respect to the induced metric $g(r)$ on $S_r$, and $\slashed d$ the exterior derivative on $S_r$. 

\vv

 We then define the second fundamental form $K$ as the $(0,2)$ symmetric tensor field on $M$ that is tangential to the leaves $S_r$ of the foliation and satisfies:

\begin{equation}
K(X,Y) := g(\nabla_nX, Y)
\end{equation}
for vector fields $X,Y$ on $M$ that are tangential to $S_r$. We will decompose $K$ into the sum of its traceless and trace parts: 
\[K = \hat K + \frac{1}{2} trK \,\, g\]

\noindent Note that $\text{Hess}_{g}(r) = K$ and $\Delta_g r = trK$. 

\vv

The following equations on the leaves $S_r$ describe the evolution of the geometry on $M$ in terms of $K$ and $Ric$ (see \cite{christodoulou} and \cite{klainerman}). Given coordinates $(r, \theta^1, \theta^2)$ on $M$, 

\begin{equation} \label{geomeq1}
\partial_r trK + \frac{1}{2} (trK)^2 + |\hat K|^2=-R_{00}
\end{equation}

\begin{equation}\label{geomeq2}
\nabla_r \hat K_{ij} + trK \hat K_{ij} = -\left[ R_{ij} + \frac{1}{2} g_{ij}(R_{00}-R)\right]
\end{equation}

\begin{equation} \label{geomeq3}
R_{S_r} - \frac{1}{2} (trK)^2 + |\hat K|^2 = R-2R_{00}
\end{equation}

\begin{equation}\label{geomeq4}
\slashed \nabla^j \hat K_{ji} - \frac{1}{2} \slashed \nabla_i trK = R_{0i}
\end{equation}

\begin{equation}\label{geomeq5}
\partial_r g_{ij} = 2\hat K^{ij} + trK \, g_{ij}
\end{equation}

where $i,j = 1,2$, $R_{S_r}$ is the scalar curvature of $(S_r, g(r))$, $R$ is the scalar curvature on $(M,g)$, and $\slashed \nabla$ is the connection on $(S_r, g(r))$. Moreover, $R_{00} := Ric(n,n)$ and $R_{0i} := Ric(n, \dd{\theta^i})$ for $i=1,2$. 

\vv

We note that the left side of equation \eqref{geomeq2} can be simplified as follows
\begin{align}
\nabla_r \hat K_{ij} + trK \hat K_{ij} &= \partial_r \hat K_{ij} - 2\Gamma_{0j}^l \hat K_{il} +trK \hat K_{ij} \\
&= \partial_r \hat K_{ij}-2\hat K^l_j \hat K_{il}
\end{align}
Equation \eqref{geomeq2} can then be written as follows: 

\begin{equation}\label{geomeq22}
(\mathcal{L}_{\dd{r}} \hat K)_{ij} -2\hat K^l_j \hat K_{il}= -\left[ R_{ij} + \frac{1}{2} g_{ij}(R_{00}-R)\right]
\end{equation}

\vv

The above equations determine all the components of the Ricci curvature of $g$. More specifically, if the right hand sides of equations \eqref{geomeq1} to \eqref{geomeq4} are known on all the leaves, then the Ricci curvature can be fully recovered. In fact, if we in addition know that $(g,u)$ solves the conformal static vacuum Einstein equations for some function $u$ on $M$, then, due to the contracted Bianchi identities, equations \eqref{geomeq3} and \eqref{geomeq4} need only to be imposed on the boundary for the Ricci curvature to be fully recovered. The next proposition will prove this fact and will demonstrate the desired reduction of our problem.

\begin{redthm} \label{prop-reduction}
Let $(\gammao, \frac{1}{2} \trko)$ be Bartnik data. Let $g = dr^2 +g(r)$ and $u$ be a metric and function on $M$ respectively, where $g(r)$ is a metric on $S^2$ for every $r \in [nm_0,\infty)$. The pair $(\mathfrak g, f) = (e^{-2u} g, e^u)$ solves the static Einstein vacuum equations with Bartnik data $(\gammao, \frac{1}{2} \trko)$ if and only if $(g,u)$ satisfies 

\begin{align} \label{redeq1}
\Delta_g u &= 0, \quad \text{on $M$}\\ \label{redeq2}
\partial_r trK + \frac{1}{2}trK^2 + |\hat K|^2 + 2(\partial_ru)^2 &= 0, \quad \text{on $M$}\\ \label{redeq3}
\nabla_r \hat K + trK \hat K + \left[2 \slashed du \otimes \slashed du + g(r) \left((\partial_r u)^2 - |\nabla u|^2 \right)\right] &= 0, \quad \text{on $M$}\\ \label{redeq4}
2|\slashed \nabla u|^2 - 2(\partial_r u)^2 - |\hat K|^2 - R_{\partial M} + \frac{1}{2}{trK}^2 &= 0, \quad \text{on $\partial M$}\\ \label{redeq5}
2(\partial_r u) \slashed du - \cancel{div} (\hat K) + \frac{1}{2} \slashed dtrK  &=0, \quad \text{on $\partial M$}\\ \label{redeq6}
\left. e^{-2u}g \right|_{\partial M} &= \gammao, \quad \text{on $\partial M$}\\ \label{redeq7}
\left. e^u \left( trK \right|_{\partial M} - 2 \partial_r u \right) &= \trko, \quad \text{on $\partial M$}
\end{align}
\end{redthm}
\begin{proof}
The ``only if " direction is clear from equations \eqref{geomeq1} to \eqref{geomeq4}. We prove the ``if "  direction. 
\

\vv
Suppose $(g,u)$ satisfy equations \eqref{redeq1} to \eqref{redeq7}. It suffices to show that $Ric = 2du \otimes du$. We first decompose the Ricci curvature of $g$ with respect to the foliation. Let $\Pi$ be the $(1,1)$ projection tensor field defined by 
\begin{equation}
\Pi^{\mu}_{\nu} = \delta^{\mu}_{\nu} - n^{\mu}n_{\nu}
\end{equation}
We then define the function $Q$, the $1$-form $P$ tangential to the foliation, and the $(0,2)$ symmetric tensor field $S$ tangential to the foliation as follows: 
\begin{equation}
Q := Ric(n,n), \quad P_{\mu} := \Pi^{\mu'}_{\mu}n^{\nu} R_{\mu'\nu}, \quad  S_{\mu\nu} := \Pi^{\mu'}_{\mu} \Pi^{\nu'}_{\nu}R_{\mu'\nu'}
\end{equation}
The Ricci curvature of $g$ can then be written in the following way:
\begin{equation} \label{ricci-decomposition}
Ric = Q \nb \otimes \nb + P \otimes \nb + \nb \otimes P + S
\end{equation}
where $\nb$ is the 1-form achieved by lowering the index for $n$. We will omit the underbar when we write $\nb$ in components. 
\vv

Define the function $H$ on $M$ and the 1-form $A$ tangent to the foliation in the following way: 
\begin{equation} \label{defH} H := R - 2|\nabla u|^2\end{equation}
\begin{equation} \label{defA} A := P - 2n(u) \slashed du \end{equation}

\

We now compare equations \eqref{geomeq1} - \eqref{geomeq4} with equations \eqref{redeq2} - \eqref{redeq5}. From Equation \eqref{geomeq1} and \eqref{redeq2}, we deduce on $M$ that
\begin{equation} Q = 2n(u)^2 \end{equation}
From equation \eqref{geomeq2} and \eqref{redeq3}, we deduce on $M$ that
\begin{equation} S = 2\slashed du \otimes \slashed du + \frac{1}{2} \gamma H \end{equation}
 We also have by definition of $A$: 
\begin{equation} P = 2n(u) \slashed d u + A \end{equation}

From equation \eqref{geomeq3} and \eqref{redeq4}, it follows that on $\partial M$, 
\begin{equation} R - 2Q = 2|\nabla u|^2 - 4 n(u)^2 \end{equation}
which gives us: 
\begin{equation} \left. H \right|_{\partial M} = 0\end{equation}

From equation \eqref{geomeq4} and \eqref{redeq5}, we get: 
$$\left. A \right|_{\partial M} = 0$$

To prove the statement, we just need to show that $H,A= 0$. 

\vv

We first prove the following lemma.

\begin{lem} \label{AandHlemma}
Let $dr^2+g(r)$ be a metric on $M$ where $g(r)$ is a metric on $S^2$ for every $r \in [nm_0,\infty)$. Suppose that the Ricci decomposition relative to the foliation defined by $r$, as written in equation \eqref{ricci-decomposition}, is

\begin{equation}
Q = 2n(u)^2, \qquad P = 2n(u) \slashed d u + A, \qquad S = 2\slashed du \otimes \slashed du + \frac{1}{2} g(r) H
\end{equation}
where $u$ is a harmonic function on $M$, $H$ is a function, and $A$ is a 1-form tangent to the foliation. 

Then $A$ and $H$ satisfy 
\begin{equation} \label{AandHeq} (\nabla_n A)_k + A_i K^i_k + trK A_k = 0
\end{equation}

\begin{equation}
\nabla_nH + H trK = 2div (A)
\end{equation}
\end{lem}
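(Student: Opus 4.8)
The plan is to use the contracted second Bianchi identity $\nabla^\mu R_{\mu\nu} = \tfrac{1}{2}\nabla_\nu R$ together with the given decomposition of $Ric$ relative to the foliation, and to extract the normal component and the tangential components separately. First I would record the divergence identities for a normal foliation: for a function $\phi$ one has $\nabla^\mu(\phi\, n_\mu n_\nu) $ expanded using $\nabla_\mu n_\nu = K_{\mu\nu}$ (since $n = \partial_r$ is geodesic, $\nabla_n n = 0$), and similarly for $\nabla^\mu(P_\mu n_\nu)$, $\nabla^\mu(n_\mu P_\nu)$, and $\nabla^\mu S_{\mu\nu}$, where tangential divergences get expressed through $\cancel{div}$ on $S_r$ plus curvature terms from $K$. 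The scalar curvature is $R = Q + tr S + (\text{trace correction})$; using $S = 2\slashed du\otimes\slashed du + \tfrac12 g(r) H$ we get $tr_{g(r)} S = 2|\slashed\nabla u|^2 + H$, so $R = 2n(u)^2 + 2|\slashed\nabla u|^2 + H = 2|\nabla u|^2 + H$, consistent with \eqref{defH}. This already controls $\nabla R$ in terms of $u$ and $H$.

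Next I would plug the decomposition into $\nabla^\mu R_{\mu\nu} = \tfrac12\nabla_\nu R$ and project. Projecting onto $n$: the left side produces $\nabla_n Q + trK\, Q + \cancel{div}(P) + (\text{lower-order }K\text{ terms})$, and after substituting $Q = 2n(u)^2$, $P = 2n(u)\slashed du + A$ and using the harmonicity $\Delta_g u = \nabla_n(n(u)) + trK\, n(u) + \slashed\Delta u = 0$ (so $n(u)$ and $\slashed\Delta u$ are tied together) together with the commutator for third derivatives of $u$, all the pure-$u$ contributions should cancel against $\tfrac12 n(R)$, leaving precisely $\nabla_n H + trK\, H = 2\,\cancel{div}(A)$, which is the second claimed equation. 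Projecting onto the tangent space of $S_r$: the left side gives $\nabla_n P_k + trK\, P_k + K^i_k P_i + \slashed\nabla^i S_{ik} + (\text{curvature of }S_r\text{ acting, from commuting derivatives})$, and the right side is $\tfrac12\slashed\nabla_k R = \slashed\nabla_k|\nabla u|^2 + \tfrac12\slashed\nabla_k H$. Substituting the forms of $P$ and $S$, expanding $S = 2\slashed du\otimes\slashed du + \tfrac12 g(r)H$ whose tangential divergence is $2(\slashed\Delta u)\slashed du + 2\slashed\nabla_i u\,\slashed\nabla^i\slashed\nabla_k u + \tfrac12\slashed\nabla_k H$, and again invoking $\Delta_g u=0$ plus the Bochner/Ricci-commutator identity on $S_r$ to handle $\slashed\nabla^i\slashed\nabla_k u$, the $H$-terms cancel between the two sides and every purely-$u$ term cancels, yielding $(\nabla_n A)_k + K^i_k A_i + trK\, A_k = 0$, the first claimed equation.

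The main obstacle I expect is bookkeeping rather than conceptual: correctly assembling the divergence of a $(0,2)$ tensor decomposed along a foliation — i.e. tracking every term where $\nabla_\mu n_\nu = K_{\mu\nu}$ appears when differentiating the $n\otimes n$, $P\otimes n$, and $S$ pieces, and being careful that $\slashed\nabla$ is the intrinsic connection on $(S_r, g(r))$ so that commuting $\slashed\nabla$'s on $u$ brings in the \emph{intrinsic} curvature $R_{S_r}$, which must then be eliminated using the Gauss equation \eqref{geomeq3} (now available on all leaves since $S$ is known everywhere) to re-express it through $Q$, $R$, $trK$, $|\hat K|^2$. A secondary delicate point is that $n(u)$'s normal derivative is only determined through the harmonic equation, so one must consistently substitute $\nabla_n(n(u)) = -trK\, n(u) - \slashed\Delta u$ wherever it arises, and verify that the resulting scalar and tangential equations close with no residual $u$-dependence. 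Once those substitutions are made uniformly, the cancellations are forced by the fact that $(g,u)$ with $H=A=0$ would solve the static vacuum system, so the identities must reduce to homogeneous transport equations in $H$ and $A$.
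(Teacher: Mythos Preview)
Your strategy is correct and is essentially the paper's: apply the contracted Bianchi identity $\nabla^\mu R_{\mu\nu}=\tfrac12\nabla_\nu R$ and read off the normal and tangential components. However, the paper executes this far more economically via a single observation you do not make: since $Q=2n(u)^2$, $P-A=2n(u)\slashed du$, and $S-\tfrac12 g(r)H=2\slashed du\otimes\slashed du$ are precisely the foliation components of the tensor $2\,du\otimes du$, one may rewrite
\[
Ric \;=\; 2\,du\otimes du \;+\; A\otimes\nb + \nb\otimes A \;+\; \tfrac12 H\,\gamma,\qquad \gamma_{\mu\nu}=g_{\mu\nu}-n_\mu n_\nu.
\]
The divergence of the first term is then computed in one line, $\nabla^\mu(2\,du\otimes du)_{\mu\nu}=2(\Delta_g u)\nabla_\nu u + \nabla_\nu|\nabla u|^2=\nabla_\nu|\nabla u|^2$, and this cancels exactly against $\tfrac12\nabla_\nu R$ once one uses $R=2|\nabla u|^2+H$. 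What survives involves only $A$, $H$, and $K$, and the two transport equations fall out immediately upon projecting. In particular, none of the machinery you anticipate needing---Bochner/Ricci commutators on $S_r$, the Gauss equation \eqref{geomeq3}, or careful tracking of $\slashed\nabla^i\slashed\nabla_k u$---is actually required; all of the ``pure-$u$'' cancellations you forecast are absorbed into the single identity for $\nabla^\mu(2\,du\otimes du)$. Your route would succeed, but the regrouping replaces a page of bookkeeping with a few lines.
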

\begin{proof}



Recall the second Bianchi identity: 
\begin{equation} \frac{1}{2} \nabla_{\nu} R = \nabla^{\mu}R_{\nu \mu}\end{equation}

where $\mu, \nu = 0,1,2$. 

We can write the Ricci curvature as follows: 

\begin{equation}
Ric = Q \nb \otimes \nb + P \otimes \nb + \nb \otimes P + S
\end{equation}

\vv

We compute the divergence of the tensor $2du\otimes du$ to be:
\begin{align} 
 \nabla^{\mu} (2du\otimes du)_{\mu\nu}&= 2\nabla^{\mu} du_{\mu}du_{\nu} + 2 du_\mu \nabla^{\mu} du_{\nu} \\
 &= \Delta u \,  du_{\nu} + 2\mathrm{Hess}(u) (\nabla u, \partial_{\nu})  \\
 &=\nabla_{\nu} |\nabla u|^2
 \end{align}

\vv

where $\Delta u = 0$ was used in the last line. Using the Bianchi identities, we get

\begin{align}
\frac{1}{2} \nabla_{\nu} R &= \nabla^{\mu} \bigg(2du \otimes du + A \otimes \nb + \nb \otimes A + \frac{1}{2}H\gamma \bigg)_{\mu\nu}\\
&=\nabla^{\mu} (2du\otimes du)_{\mu\nu} + \nabla^{\mu}A_{\mu} n_{\nu} + A_{\mu} \nabla^{\mu} n_{\nu} + \nabla^{\mu} n_{\mu} A_{\nu} + n_{\mu} \nabla^{\mu} A_{\nu}\\
& \quad + \frac{1}{2} \nabla_{\nu'}H \Pi^{\nu'}_{\nu} + \frac{1}{2} H \nabla^{\mu}(g_{\mu \nu} - n_{\mu}n_{\nu}) \nonumber\\
&=\nabla_{\nu}|\nabla u|^2 + \nabla^{\mu}A_{\mu} n_{\nu} + A_{\mu} K^{\mu}_{\nu} + trK A_{\nu} + \nabla_n A_{\nu} \\
& \quad + \frac{1}{2} \nabla_{\nu'}H \Pi^{\nu'}_{\nu} - \frac{1}{2} H trK n_{\nu} \nonumber
\end{align}

where $\Pi^{\nu'}_{\mu} = \delta^{\nu'}_{\mu} - n^{\nu'} n_{\mu}$. We also used the fact that $\gamma_{\mu\nu} = g_{\mu' \nu'} \Pi^{\mu'}_{\mu} \Pi^{\nu'}_{\nu} =g_{\mu \nu} - n_{\mu}n_{\nu}$.

\vv

Using fermi coordinates $(r,\theta^1, \theta^2)$ and letting $\nu=i=1,2$ , we get
\begin{equation}
(\nabla_n A)_i + A_j K^j_i + trK A_i= 0
\end{equation}

Letting $\nu = 0$, we get
\begin{equation}
\nabla_nH + H trK = 2 \nabla^{\mu}A_{\mu}
\end{equation}
as desired. 

\end{proof}

\vv

We then have that $A$ satisfies,
\begin{equation}
\begin{cases} (\nabla_n A)_k + A_i K^i_k + trK A_k = 0, & \text{on $M$} \\

A = 0, & \text{on $\partial M$}
\end{cases}
\end{equation}
By the existence and uniqueness theory of ODEs, it follows that $A=0$.

\vv

Since $div (A) = 0$, we get that $H$ satisfies

\begin{equation}
\begin{cases} \nabla_n H +H trK= 0, & \text{on $M$} \\

H = 0, & \text{on $\partial M$}
\end{cases}
\end{equation}

By invoking again the existence and uniqueness theorem of ODEs, we deduce that $H = 0$. 
\end{proof}

\section{ Proof of The Main Theorem} \label{proof section}

The reduction theorem in section \ref{reduction chapter} suggests that we study the map 

\begin{multline*}
 \Psi: \mathcal{M}^{k+1}(\partial M) \times H^k(\partial M)  \times \mathcal{M}^k_{\delta}(M)  \times {\mathcal{A}^{(2,k+1)}_{\delta}} \\ 
 \to \mathcal{A}^{(0,k-1)}_{\delta-2}(M) \times  L^2_{\delta-2} \bigg( [nm_0,\infty); H^k(S^2) \bigg) \times  L^2_{\delta-2} \bigg( [nm_0,\infty) ; \mathcal{H}^k (S^2) \bigg)\\
 \times H^{k-1}(\partial M) \times \Omega^{k-1}(\partial M) \times \mathcal{H}^{k}(\partial M) \times  H^{k}(\partial M) 
 \end{multline*}

\begin{align} \label{Gtilde}
& \Psi(\gammao, \frac{1}{2} \trko , g, u ) := \begin{pmatrix} \Delta_{g} u \lv \\ 
\partial_r trK + \frac{1}{2}trK^2 + |\hat K|^2 + 2(\partial_ru)^2  \lv \\
\nabla_r \hat K + trK \hat K  + \left[2 \slashed d u \otimes \slashed d u + g(r) \left((\partial_r u)^2 - |\nabla u|^2 \right)\right]  \lv \\
2|\slashed \nabla u|^2 - 2(\partial_r u)^2 - |\hat K|^2 - R_{\partial M} + \frac{1}{2}{trK}^2  \lv \\
2(\partial_r u) \slashed du - \cancel{div} (\hat K) + \frac{1}{2} \slashed dtrK     \lv   \\
\left. e^{-2u} g \right|_{\partial M} - \gammao \lv \\
\left. trK \right|_{\partial M} - e^{-u}\left( trK_{\mathfrak{B}} + 2 e^{u}\partial_r u\right) \end{pmatrix} 
\end{align}
where $trK$ and $\hat K$ are with respect to the metric $ g$, and $R_{\partial M}$ is with respect to the metric $e^{2u}\gammao$. Furthermore, norms $\lvert \cdot \rvert$ used in the second, third and fourth line are with respect to the metric $ g$. 

\vv

We wish to show that there exists a map taking Bartnik data $(\gammao, \frac{1}{2} \trko)$ close to Schwarzschild data to a pair $(g,u)$ satisfying $\Psi(\gammao, \frac{1}{2} \trko, g, u) = 0$, showing that $(\mathfrak g, f) = (e^{-2u} g, e^u)$ solves the static Einstein vacuum equations with Bartnik data $(\gammao, \frac{1}{2} \trko)$. This can be achieved by first attempting to show that the linearization of $\Psi$ with respect to $(g,u)$ at $( \gammasc, \frac{1}{2} \trksc, g_{sc}, u_{sc} )$ is an isomorphism, or merely surjective, and then invoking the implicit function theorem. However, the linearization of the contracted Codazzi equation, in the fifth line of the definition of $\Psi$, leads to obstructions to surjectivity stemming from the divergence operator acting on symmetric traceless tensors on $S^2$. More specifically, we are faced with the cokernel of the divergence operator: a $6$-dimensional space of obstructions equal to the space of conformal Killing vector fields on $S^2$. 

\vv

This difficulty does not preclude the possibility of finding solutions given arbitrary Bartnik data close to Schwarzschild data. A similar situation arises when one attempts to show the existence of metrics on the sphere with prescribed scalar curvature (see \cite{kazdan}). The operator of study will not satisfy the conditions for the inverse function theorem, yet existence holds as shown in \cite{an-huang}. In our case, we circumvent this difficulty by introducing an artificial object, in the form of a vector field $X$, to the meaning of a solution to our problem, proving its existence using the implicit function theorem, and then finally showing that this vector field $X$ vanishes, yielding a solution to the original problem. 

\vv

\subsection{Definition of the Artificial Vector Field $X$}\label{artificial-section}

 As explained in the introduction of section \ref{proof section}, the contracted Codazzi equations give rise to obstructions that are in correspondence with the space of conformal Killing vector fields on $S^2$. We will overcome these seeming obstructions by introducing an artificial vector field $X$ to the definition of a solution; this means that the solution will consist of a metric $g$, a function $u$, {\it and} a vector field $X$. This needs to be done in a way so that, firstly, the corresponding modified problem is solvable, and secondly, the artificial vector field, in fact, vanishes for a solution to the modified problem, yielding a solution to the original problem. To achieve this, the artificial vector field $X$ needs to be carefully defined, which will require a certain way of uniquely extending conformal Killing fields from $S^2$ to the ambient manifold $M$. This procedure will be outlined in this section. 
 
 Notably, Huang and An have also introduced an artificial vector field $X$ in \cite{an-huang} and \cite{an-huang2} for analogous purposes; specifically, they define $X$ to be a vector field that vanishes on the boundary and asymptotically approaches a Killing vector field at infinity. In contrast, we will define $X$ to be a vector field that satisfies $\left(\mathcal{L}_{\dd{r}} X \right)^T = 0$ on the boundary and asymptotically approaches a conformal killing vector field on $(M,g_{sc})$ that restricts to a conformal Killing vector field on $(\partial M, \gamma_{\mathbb{S}^2})$.

\vvv

Given a metric $g$ and a vector field $X$ on $M$, we denote by $\conflie{g}X$ and $\Delta_{g,conf}X$ the conformal Lie derivative of $g$ with respect to $X$ and the conformal laplacian of $X$ defined by 

\begin{equation}
\conflie{g}X := \widehat{\mathcal{L}_X g}, \quad \Delta_{g,conf} X:= \text{div}_g \left(\conflie{g}X  \right)
\end{equation}

\noindent where $\widehat{\mathcal{L}_X g}$ is the traceless part of $\mathcal{L}_X g$. It follows that $X$ is conformal Killing on $(M,g)$ if and only if $\conflie{g} X = 0$.

\vv



\begin{defn}
Given a conformal Killing vector field $X_{CK}$ on $(\partial M, \gamma_{\mathbb{S}^2})$, we denote by $\overline{X_{CK}}$ the unique vector field on $(M,g_{sc})$ extending $X_{CK}$ on $\partial M$ and satisfying the evolution equation  
\begin{equation}
\mathcal{L}_{\dd{r}} \overline{X_{CK}} = 0, \quad \text{in $M$}
\end{equation}
Also, we will use $`` \cancel{div}_{\gamma_{\mathbb{S}^2}} (X_{CK})"$ to denote both the divergence of $X_{CK}$ on $(\partial M, \gamma_{\mathbb{S}^2})$ and the same function extended to a function on $M$ independent of $r$. It should be clear from context which one we are referring to. 
\end{defn}


\vv

\begin{defn}
Define the space $\mathcal{X}_{\infty}$ as the space of conformal killing vector fields $X_{\infty}$ on $(M,g_{sc})$ of the form 
\begin{equation} \label{xinf}
X_{\infty} = f(r) \bigg( \cancel{div}_{\gamma_{\mathbb{S}^2}} (X_{CK}) \bigg) \dd{r} + h(r) \overline{X_{CK}}
\end{equation}
where $f = f(r)$ and $h= h(r)$ are smooth functions on $M$ such that $f= 0 $ and $h= 1$ on $\partial M$ and $X_{CK} $ is a conformal Killing vector field on $(\partial M, \gamma_{\mathbb{S}^2})$. \end{defn}

\vv

\noindent In the case that $X_{CK}$ is Killing, equation \eqref{xinf} becomes 

\begin{equation}\label{xinfkilling}
X_{\infty} = h(r) \overline{X_{CK}}
\end{equation}

\begin{lem} \label{xinf-lemma}
Let $X_{CK}$ be a nontrivial conformal Killing vector field on $(\partial M, \gamma_{\mathbb{S}^2})$. 
\begin{enumerate}[label=\textbf{(\alph*)}]
\item Suppose $X_{CK}$ is Killing on $(\partial M, \gamma_{\mathbb{S}^2})$. Then $h\equiv 1$ is the unique smooth function $h=h(r)$ on $M$  in which $h =1$ on $\partial M$ and $X_{\infty}$, defined by equation \eqref{xinfkilling}, is conformal Killing on $(M,g_{sc})$. In fact, $X_{\infty}$ would also be Killing. 

\item Suppose $X_{CK}$ is not Killing on $(\partial M, \gamma_{\mathbb{S}^2})$. There exists unique smooth functions $f= f(r)$ and $h=h(r)$ on $M$ such that $f\equiv 0$, $h\equiv 1$ on $\partial M$ and the vector field $X_{\infty}$ defined by \eqref{xinf}
is conformal Killing on $(M,g_{sc})$. Furthermore, $f = O(r^2)$ and $h=O(r^2)$. 
\end{enumerate}
In particular, $\mathcal{X}_{\infty}$ is a $6$ dimensional vector space of conformal Killing vector fields on $(M,g_{sc})$. \\
\end{lem}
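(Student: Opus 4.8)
The plan is to convert the conformal Killing equation $\conflie{g_{sc}}X_\infty = 0$ into ordinary differential equations in the radial variable and then invoke the standard existence and uniqueness theory for linear ODEs. Write $g_{sc} = dr^2 + \Omega(r)\,\gamma_{\mathbb{S}^2}$ with $\Omega(r) = r(r-2m_0)$, which is smooth and strictly positive on $[nm_0,\infty)$ since $n>2$ forces $nm_0 > 2m_0$. For $X_\infty = f(r)\,\psi\,\dd{r} + h(r)\,\overline{X_{CK}}$, where $\psi := \cancel{div}_{\gamma_{\mathbb{S}^2}}(X_{CK})$, I would compute $\mathcal{L}_{X_\infty}g_{sc}$ directly, using that $\mathrm{Hess}_{g_{sc}}(r) = (r-m_0)\gamma_{\mathbb{S}^2}$ is the second fundamental form of the level sets of $r$, that $[\dd{r},\overline{X_{CK}}] = 0$ by the defining property of $\overline{X_{CK}}$, and that $\mathcal{L}_{X_{CK}}\gamma_{\mathbb{S}^2} = \psi\,\gamma_{\mathbb{S}^2}$ on $S^2$ by conformal Killing-ness. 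Because $g_{sc}$ is a warped product, $\mathcal{L}_{X_\infty}g_{sc}$ splits into a $dr\otimes dr$ block with coefficient $2f'\psi$, a mixed $dr\otimes(\text{angular})$ block equal to $f\,\slashed d\psi + h'\Omega\,X_{CK}^\flat$ (which is automatically $g_{sc}$-traceless), and an angular block proportional to $\gamma_{\mathbb{S}^2}$; subtracting the $g_{sc}$-trace then extracts the traceless part block by block.

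For part (a), $X_{CK}$ Killing gives $\psi\equiv 0$, so the only surviving term is the mixed block $h'\Omega\,(dr\otimes X_{CK}^\flat + X_{CK}^\flat\otimes dr)$, which is already traceless; vanishing forces $h'\Omega\equiv 0$, hence $h'\equiv 0$ and $h\equiv 1$ by the normalization $h(nm_0)=1$, so $X_\infty = \overline{X_{CK}}$, and the one-line check $\mathcal{L}_{\overline{X_{CK}}}g_{sc} = \Omega\,\mathcal{L}_{X_{CK}}\gamma_{\mathbb{S}^2} = 0$ shows it is in fact Killing.

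For part (b), I would take $X_{CK}$ among the gradient conformal fields $\slashed\nabla\phi$ with $\slashed\Delta\phi = -2\phi$, which span a $3$-dimensional complement to the Killing fields in $\mathfrak{conf}(S^2)$; for these one has the extra identity $\slashed d\psi = -2\,X_{CK}^\flat$. After cancelling the common angular factors $\gamma_{\mathbb{S}^2}$ and $X_{CK}^\flat$, the condition $\conflie{g_{sc}}X_\infty = 0$ collapses to the $2\times 2$ linear first-order system
\[
f' = \frac{r-m_0}{\Omega(r)}\,f + \tfrac12\,h, \qquad h' = \frac{2}{\Omega(r)}\,f ,
\]
equivalently the single second-order equation $r(r-2m_0)\,h'' + (r-m_0)\,h' - h = 0$. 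The prescribed boundary values $f(nm_0)=0$, $h(nm_0)=1$ are exactly an initial condition for this system, whose coefficients are smooth on all of $[nm_0,\infty)$, so there is a unique smooth solution $(f,h)$ on the whole half-line — this yields existence and uniqueness. The growth at infinity is read off from the indicial equation of the system at $r=\infty$, whose exponents are $0$ and $2$ (equivalently $\pm 1$ for $h$), giving $f = O(r^2)$ and $h = O(r) = O(r^2)$. Finally, $\mathcal{X}_\infty$ is spanned by the vector fields $X_\infty$ attached to a basis of $\mathfrak{conf}(S^2)$ — three from the Killing directions (part (a)) and three from the gradient directions (part (b)) — and these are linearly independent because $X_\infty|_{\partial M} = X_{CK}$; since each is conformal Killing on $(M,g_{sc})$, $\mathcal{X}_\infty$ is a $6$-dimensional space of such fields.

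The step I expect to be the main obstacle is the Lie-derivative bookkeeping in the warped-product splitting — in particular, tracking how the $dr\otimes dr$ block and the angular block couple through the overall $g_{sc}$-trace, so that the conformal Killing equation reduces to the closed $2\times 2$ system above rather than to an overdetermined (and inconsistent) set of equations. A secondary technical point is confirming that the initial value problem stays regular all the way to $r=\infty$ and extracting the stated asymptotics cleanly from the indicial roots.
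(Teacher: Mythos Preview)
Your proposal is correct and follows essentially the same route as the paper: compute $\mathcal{L}_{X_\infty}g_{sc}$ block by block in the warped-product splitting, use the identity $\slashed\nabla\psi = -2\,X_{CK}^\flat$ for the gradient conformal fields, reduce to the coupled first-order system $2f' = f\,trK_{sc} + h$, $h' = \tfrac{2}{\Omega}f$, and read off the asymptotics from the indicial analysis at $r=\infty$. The only cosmetic differences are that the paper eliminates $h$ to obtain a second-order ODE for $f$ (indicial roots $0,2$) while you eliminate $f$ to get one for $h$ (indicial roots $\pm 1$), and that you are more explicit than the paper in restricting part (b) to the pure gradient conformal fields---which is in fact necessary, since for a mixed Killing-plus-gradient $X_{CK}$ the ansatz \eqref{xinf} with a single $h(r)$ cannot be made conformal Killing.
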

\begin{proof}
We will repeatedly use the following identity of the Lie derivative: for any vector fields $X,Y, Z$ and any $(0,2)$ tensor field $T$ on $M$, 

\begin{equation}
\Big[\mathcal{L}_{X} T \Big] (Y,Z) = X \bigg( T(Y,Z) \bigg) - T([X,Y], Z) - T(Y,[X,Z])
\end{equation}

We first prove (a). Suppose $X_{CK}$ is Killing on $(\partial M, \gamma_{\mathbb{S}^2})$. It is clear that $h(r) \overline{X_{CK}}$ is Killing on $(M,g_{sc})$ for $h \equiv 1$ on $M$ as it is a rotation vector on the spherically symmetric Schwarzschild manifold. Now suppose $h=h(r)$ is a smooth function such that $h\equiv 1$ on $\partial M$ and  $h(r) \overline{X_{CK}}$ is conformal Killing on $M$. In particular, we have that 

\begin{align}
0 &= \Big[ \mathcal{L}_{h(r) \overline{X_{CK}}} g_{sc} \Big] \left( \dd{r}, \overline{X_{CK}} \right)\\
&= g_{sc} \left( \left[\dd{r}, h(r) \overline{X_{CK}}\right], \overline{X_{CK}}\right) + g_{sc} \left( \left[Y, h(r) \overline{X_{CK}}\right], \dd{r} \right) \\
&= h'(r) g_{sc}(\overline{X_{CK}}, \overline{X_{CK}}) 
\end{align}
In view of the fact that $X_{CK} \neq 0$, it follows that $h' \equiv 0$ and so $h\equiv 1$ on $M$ as needed. 

\vv

We now prove (b). Suppose $X_{CK}$ is not Killing on $(\partial M, \gamma_{\mathbb{S}^2})$. Let $f = f(r)$ and $h=h(r)$ be smooth functions on $M$. Recall that the metric $g_{sc}$ can be written as 
\[g_{sc} = dr^2 + r(r-2m_0) \gamma_{\mathbb{S}^2}\]
Only for the proof of this lemma, we will denote the function $\cancel{div}_{\gamma_{\mathbb{S}^2}}(X_{CK}) $ by $B_{X_{CK}}$ for simplicity of the notation. Recall that $B_{X_{CK}}$ is understood as a function on $M$ or a function on $\partial M$ depending on the context, and that $B_{X_{CK}}$ as a function on $M$ is constant in $r$. 

\vv

Since $X_{CK}$ is not Killing, we have that $B_{X_{CK}} $ is nonzero. Moreover, after fixing a spherical coordinate system on $\partial M$, the vector field $X_{CK}$ can be written as a linear combination of the vector fields $ \slashed \nabla_{\gamma_{\mathbb{S}^2}} Y_{m}^{\ell = 1}$, for $m=-1,0,1$, where $Y_{m}^{\ell=1}$ are the $\ell=1$ spherical harmonics on $\partial M$. In particular, it holds that
\begin{align}
\slashed \nabla_{\gamma_{\mathbb{S}^2}} B_{X_{CK}} = -2 X_{CK}
\end{align}

This implies that

\begin{align}
\nabla_{g_{sc}} \left( B_{X_{CK}} \right)  &= \frac{1}{r(r-2m_0)} \slashed \nabla_{\gamma_{\mathbb{S}^2}} \left( B_{X_{CK}} \right) \\
&= -\frac{2}{r(r-2m_0)} \overline{X_{CK}}
\end{align}

\vv

For $X_{\infty} := f(r) B_{X_{CK}} \dd{r} + h(r) \overline{X_{CK}}$ and arbitrary vector fields $Y,Z$ tangent to the foliation, we compute

\begin{align}
\Big[ \mathcal{L}_{X_{\infty}} g_{sc} \Big] \left(\dd{r}, \dd{r} \right) &= 2 g_{sc}\left(\left[\dd{r}, X_{\infty}\right], \dd{r} \right)\\
& = 2f'(r) B_{X_{CK}}
\end{align}

\begin{align}
\Big[ \mathcal{L}_{X_{\infty}} g_{sc} \Big] \left(\dd{r}, Y \right) &= g_{sc}\left(\left[\dd{r}, X_{\infty}\right], Y\right) + g_{sc}\left(\left[Y, X_{\infty}\right], \dd{r}\right)\\
& = h'(r) g_{sc}( \overline{X_{CK}}, Y) + f(r) Y \left( B_{X_{CK}} \right)\\ 
&=h'(r) g_{sc}( \overline{X_{CK}}, Y) + f(r) g_{sc} \left(  \nabla B_{X_{CK}} , Y \right)\\
&= \left( h'(r) - \frac{2}{r(r-2m_0)} f(r)\right) g_{sc}(\overline{X_{CK}}, Y)
\end{align}

\begin{align}
\Big[ \mathcal{L}_{X_{\infty}} g_{sc} \Big] \left(Y, Z \right) &= \Big[ \mathcal{L}_{f(r)B_{X_{CK}} \dd{r}} g_{sc}\Big] \left(Y, Z \right) + \Big[\mathcal{L}_{h(r)\overline{X_{CK}}} g_{sc}\Big] \left(Y, Z \right)\\
& = f(r)\,\, B_{X_{CK}}\,\, trK_{sc} \, \, g_{sc}(Y,Z) + r(r-2m_0)h(r) \, \Big[\mathcal{L}_{X_{CK}} \gamma_{\mathbb{S}^2}\Big] (Y,Z)\\
& = f(r)\,\, B_{X_{CK}}\,\, trK_{sc} \, \,g_{sc}(Y,Z) + r(r-2m_0)h(r) \, B_{X_{CK}}  \, \gamma_{\mathbb{S}^2}(Y,Z)\\
&= \left[ f(r) trK_{sc} +h(r) \right] B_{X_{CK}} g_{sc}(Y,Z)
\end{align} 

In the above calculation, we used the fact that $ X_{CK}$ is conformal Killing on each leaf and hence $\mathcal{L}_{ X_{CK}} \gamma_{\mathbb{S}^2} = B_{X_{CK}} \, \gamma_{\mathbb{S}^2}$.

\vv

It follows that $\mathcal{L}_{X_{\infty}}g_{sc}$ is conformal to $g_{sc}$,  $f \equiv 0$ and $h\equiv 1$ on $\partial M$  if and only if  the pair $(f,h)$ satisfy the following on $[n m_0, \infty)$.

\begin{equation}
\begin{cases}2f'(r) = f(r) trK_{sc} + h(r),\\
h'(r) = \frac{2}{r(r-2m_0)} f(r), \\
f (n m_0)= 0, \\
h (n m_0)= 1\
\end{cases}
\end{equation}

This decouples to the following initial value problems for $f$ and $h$ on $[n m_0, \infty)$. 

\begin{equation} \label{asymp-f}
\begin{cases}f''(r) - \frac{(r-m_0)}{r(r-2m_0)} f'(r) + \frac{2m_0^2}{r^2(r-2m_0)^2} f(r) = 0 \\
f(n m_0) = 0 \\
f'(n m_0) = \frac{1}{2} \\
\end{cases}
\end{equation}

\begin{equation} \label{asymp-h}
\begin{cases}h'(r) = \frac{2}{r(r-2m_0)} f(r) \\
h(n m_0) = 1 \\
\end{cases}
\end{equation}

Invoking the existence and uniqueness theorem for ODEs, it follows that there exists unique smooth functions $f$ and $g$ satisfying the above initial value problems.

\vv

 We have then proven that there exists unique smooth functions $f= f(r)$ and $h=h(r)$ on $M$ such that $f\equiv 0$, $h\equiv 1$ on $\partial M$ and the vector field $X_{\infty}$ is conformal Killing on $(M,g_{sc})$.

\vv

We utilize Fuchsian theory to establish that both $f$ and $h$ are $O(r^2)$. We first observe that the ODE for $f$ in equation \eqref{asymp-f} has a regular singular point at infinity. We can then express $f$ as a Frobenius series as follows (see \cite{advancedmathmethods}):

\begin{equation}
f(r) = (r-m_0)^{\alpha} \sum_{n=0}^{\infty} a_n (r-m_0)^{-n} 
\end{equation} 

where $\alpha \in \R$ is to be determined and $a_0 \neq 0$. We substitute this expression of $f$ into the ODE in \eqref{asymp-f} to get 
\begin{equation}
\begin{aligned} 
&\sum_{n=0}^{\infty} a_n (\alpha-n)(\alpha-n-1) (r-m_0)^{\alpha-n-2} - \sum_{n=0}^{\infty} a_n (\alpha-n) (r-m_0)^{\alpha-n-2} \left( 1+\frac{m_0^2}{r(r-2m_0)}\right)\\
 &+ 2m_0^2 \sum_{n=0}^{\infty} a_n (r-m_0)^{\alpha-n-4} \frac{(r-m_0)^4}{r^2(r-2m_0)^2} 
 \end{aligned} \quad  = \quad  0
\end{equation}

Upon examining the highest power of $r-m_0$, we deduce that $\alpha$ must satisfy the equation 
\begin{equation}
\alpha(\alpha-1) - \alpha = 0
\end{equation}

implying that $\alpha$ can only be $0$ or $2$. It follows that $f=O(r^2)$. The fact that $h=O(r^{2})$ follows immediately from equation \eqref{asymp-h}. 

\end{proof}


\begin{remark}
Note that any vector field $X_{\infty}$ in $\mathcal{X}_{\infty}$ satisfies 
\[\left(\mathcal{L}_{\dd{r}} X_{\infty} \right)^T = 0, \qquad \text{on $\partial M$}\]
Hence, the lemma proves an existence and uniqueness result for an overdetermined problem: for any conformal Killing vector field $X_{CK}$ on $(\partial M, \gamma_{\mathbb{S}^2})$, there exists a unique conformal Killing vector field $X_{\infty}$ on $(M,g_{sc})$ satisfying the following boundary conditions on $\partial M$: 
\begin{equation}
\left. X_{\infty} \right|_{\partial M} = X_{CK}, \quad \left(\mathcal{L}_{\dd{r}} X_{\infty} \right)^T = 0
\end{equation}
Furthermore, $X_{\infty}$ will of the form as in equation \eqref{xinf} for some functions $f=f(r)$ and $h=h(r)$. 
\end{remark}

\begin{defn} We define $\widehat{\mathcal{X}}^{2}_{\delta}(M)$ to be all vector fields $X \in \mathcal{X}^{2}_{\delta}(M)$ (see definition \ref{def of Xkdelta}) such that $X|_{\partial M}$ is tangent to $\partial M$ and $ \left( \mathcal{L}_{\dd{r}} X \right)^T = 0$ on $\partial M$. \\
The artificial vector field $X$ will be chosen to live in the space $\widehat{\mathcal{X}}^{2}_{\delta}(M) \oplus \mathcal{X}_{\infty}(M)$. The reasons for this choice will be clear in the next sections.  
\end{defn}

\vv
\subsection{Definition and Existence of the Modified Solution}

In this section, we will define the modified problem and its solutions, which we call ``the modified solutions", and prove their existence. Here and onwards, we fix a number $\delta \in (-1,-\frac{1}{2}]$ and an integer $k \geq 5$.

\vv

Let $\eta$ be a smooth cut off function on $[n m_0 , \infty)$ satisfying $\eta(r) = 1$ for $r \geq nm_0 +2$ and $\eta(r) = 0$ for $r\leq nm_0 +1$. Given $X \in \widehat{\mathcal{X}}^{2}_{\delta}(M) \oplus \mathcal{X}_{\infty}(M)$, define the function $F(X)$ on $M$ by 

\begin{equation} 
F(X) := e^{-r^4\eta(r) |X|^2}
\end{equation}
where $| \cdot |$ is taken with respect to $g_{sc}$. 
\vv

Given $ g = dr^2 +g(r) \in \mathcal{M}^k_{\delta} (M)$ and $X_{\infty} \in \mathcal{X}_{\infty}$, define $\omega( g, X_{\infty})$ to be the 1-form on $\partial M$ achieved by lowering the index of $X_{\infty}|_{\partial M}$ with respect to $ g(n m_0)$.  Note that $X_{\infty}|_{\partial M}$ is a conformal Killing field on $(\partial M, \gamma_{\mathbb{S}^2})$ by definition of the space $\mathcal{X}_{\infty}$.

\vv

\vv

Define $ \Phi$ by:

\begin{multline*}
 \Phi: \mathcal{M}^{k+1}(\partial M) \times H^k(\partial M)  \times \mathcal{M}^k_{\delta}(M)  \times {\mathcal{A}^{(2,k+1)}_{\delta}}(M)  \times \bigg( \widehat{\mathcal{X}}^{2}_{\delta}(M) \oplus \mathcal{X}_{\infty}(M) \bigg) \\ 
 \to  \mathcal{A}^{(0,k-1)}_{\delta-2}(M) \times  L^2_{\delta-2} \bigg( [nm_0,\infty); H^k(S^2) \bigg) \times  L^2_{\delta-2} \bigg( [nm_0,\infty) ; \mathcal{H}^k (S^2) \bigg) \times \mathcal{X}^{0}_{\delta-2}(M)\\
 \times H^{k-1}(\partial M) \times \Omega^{k-1}(\partial M) \times \mathcal{H}^{k}(\partial M) \times  H^{k}(\partial M) 
 \end{multline*}

\begin{align} \label{Gtilde}
& \Phi(\gammao, \frac{1}{2} \trko , g, u, X ) := \begin{pmatrix} \Delta_{g} u &  \text{on $M$} \lv \\ 
\partial_r trK + \frac{1}{2}trK^2 + |\hat K|^2 + 2(\partial_ru)^2 &  \text{on $M$} \lv \\
\nabla_r \hat K + trK \hat K  + \left[2 \slashed d u \otimes \slashed d u + g(r) \left((\partial_r u)^2 - |\nabla u|^2 \right)\right] &  \text{on $M$} \lv \\
\Delta_{g, conf} (F(X) X) & \text{on $M$} \lv \\
2|\slashed \nabla u|^2 - 2(\partial_r u)^2 - |\hat K|^2 - R_{\partial M} + \frac{1}{2}{trK}^2 & \text{on $\partial M$} \lv \\
2(\partial_r u) \slashed du - \cancel{div} (\hat K) + \frac{1}{2} \slashed dtrK + \omega( g, X_{\infty})  &  \text{on $\partial M$}  \lv   \\
\left. e^{-2u} g \right|_{\partial M} - \gammao &  \text{on $ \partial M$} \lv \\
\left. trK \right|_{\partial M} - e^{-u}\left( trK_{\mathfrak{B}} + 2 e^{u}\partial_r u\right) & \text{on $\partial M$}  \end{pmatrix} 
\end{align}

where $X_{\infty}$ is the projection of $X$ onto $\mathcal{X}_{\infty}(M)$, $trK$ and $\hat K$ are with respect to the metric $ g$, and $R_{\partial M}$ is with respect to the metric $e^{2u}\gammao$. Furthermore, norms $\lvert \cdot \rvert$ used in the second, third and fifth equation are with respect to the metric $ g$.

\vv


\vv

\begin{defn}
Given Bartnik data $(\gammao, \frac{1}{2} \trko)$, we say that a 3-tuple $(g,u, X)$ is a modified solution if $ \Phi(\gammao, \frac{1}{2} \trko, g, u, X) = 0$.
\end{defn}

\begin{remark} \label{X=0 iff} In view of proposition \ref{prop-reduction}, a modified solution is a solution to the original problem if and only if $X = 0$. 
\end{remark}
\vv



\vv

The main tool to obtain the existence of the modified problem is the implicit function theorem on Banach manifolds (see \cite{implicit}), which is stated here for convenience. 

\begin{thm}
Let $U \subset E$, $V \subset F$ be open subsets of Banach spaces $E$ and $F$, and let $\Psi: U \times V \to G$ be a $C^r$ map to a Banach space $G$, with $r\geq 1$. For some $x_0 \in U$, $y_0 \in V$, assume the partial derivatives in the second argument $D_2 \Psi(x_0,y_0): F \to G$ is an isomorphism. Then there are neighbourhoods $U_0$ of $x_0$ and $W_0$ of $\Psi(x_0,y_0)$ and a unique $C^r$ map $H: U_0 \times W_0 \to V$ such that for all $(x,w) \in U_0 \times W_0$, $\Psi(x, H(x,w)) = w$. 
\end{thm}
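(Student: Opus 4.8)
The plan is to deduce the statement from the Banach contraction mapping principle, with $x$ and $w$ playing the role of parameters. Write $w_0 := \Psi(x_0,y_0)$ and $L := D_2\Psi(x_0,y_0)$, which by hypothesis is a bounded linear isomorphism $F \to G$ with bounded inverse $L^{-1}$. Solving $\Psi(x,y) = w$ for $y$ is equivalent to finding a fixed point of
\[ T_{x,w}(y) := y - L^{-1}\bigl(\Psi(x,y) - w\bigr), \]
since $T_{x,w}(y) = y$ holds precisely when $\Psi(x,y) = w$. The point is that $D_y T_{x,w}(y) = \mathrm{Id}_F - L^{-1}D_2\Psi(x,y) = L^{-1}\bigl(L - D_2\Psi(x,y)\bigr)$ is independent of $w$ and vanishes at $(x_0,y_0)$; since $\Psi \in C^r$ with $r \geq 1$, the map $(x,y) \mapsto D_2\Psi(x,y)$ is continuous, so one can choose radii $\rho,\sigma>0$ with $\|D_y T_{x,w}(y)\| \leq \tfrac12$ for all $x \in B(x_0,\sigma)$ and $y \in \overline{B}(y_0,\rho)$. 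Hence each $T_{x,w}$ is $\tfrac12$-Lipschitz on $\overline{B}(y_0,\rho)$.

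\textbf{Self-map and fixed point.} Next I would arrange that $T_{x,w}$ maps $\overline{B}(y_0,\rho)$ into itself. Using the mean value inequality on the convex ball and the identity $T_{x_0,w_0}(y_0) = y_0$,
\[ \|T_{x,w}(y) - y_0\| \leq \|T_{x,w}(y) - T_{x,w}(y_0)\| + \|T_{x,w}(y_0) - y_0\| \leq \tfrac12\|y-y_0\| + \|L^{-1}\|\,\|\Psi(x,y_0) - w\| ; \]
since $\Psi$ is continuous with $\Psi(x_0,y_0) = w_0$, after shrinking $\sigma$ and choosing a small ball $W_0$ about $w_0$ we can force $\|L^{-1}\|\,\|\Psi(x,y_0)-w\| \leq \rho/2$, making the right side $\leq \rho$. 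Banach's theorem then supplies, for each $(x,w) \in U_0 \times W_0$ with $U_0 := B(x_0,\sigma)$, a unique fixed point $H(x,w) \in \overline{B}(y_0,\rho)$, and by construction $\Psi(x,H(x,w)) = w$; in particular $H(x_0,w_0) = y_0$. Uniqueness of $H$ as a continuous map follows because any continuous solution through $(x_0,w_0,y_0)$ remains in $\overline{B}(y_0,\rho)$ near the base point and therefore agrees with the fixed point there, hence (shrinking the domain) everywhere.

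\textbf{Continuity and $C^r$ regularity.} Local Lipschitz continuity of $H$ is immediate from the uniform contraction: subtracting the fixed-point identities for $(x,w)$ and $(x',w')$ and using the $\tfrac12$-Lipschitz bound yields $\|H(x,w) - H(x',w')\| \leq 2\|L^{-1}\|\,\|\Psi(x,H(x',w')) - \Psi(x',H(x',w')) - w + w'\|$, which is controlled by $\|x-x'\|+\|w-w'\|$ via continuity of $\Psi$. For differentiability I would differentiate $\Psi(x,H(x,w)) = w$ formally to guess
\[ DH(x,w)(\dot x,\dot w) = \bigl[D_2\Psi(x,H(x,w))\bigr]^{-1}\bigl(\dot w - D_1\Psi(x,H(x,w))\dot x\bigr) ; \]
this makes sense because invertibility is an open condition (Neumann series), so $D_2\Psi(x,H(x,w))$ stays an isomorphism near the base point, and a standard little-$o$ Taylor estimate for $\Psi$, combined with the continuity of $H$ just established, verifies that this candidate is genuinely $DH$. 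Finally one bootstraps: the displayed formula exhibits $DH$ as a composite of $(x,w)\mapsto (x,H(x,w))$ with the $C^{r-1}$ maps $D_1\Psi, D_2\Psi$ and with operator inversion $T \mapsto T^{-1}$, which is $C^\infty$ on the open set of isomorphisms; an induction on $r$ then gives $H \in C^r$.

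\textbf{Expected main obstacle.} The contraction-mapping portion (existence, local uniqueness, Lipschitz continuity) is routine; the step requiring the most care is the $C^r$ regularity of $H$ — rigorously identifying the candidate derivative and running the induction, which depends on the smoothness of operator inversion and on uniform local control of $H$ and of $D_2\Psi(x,H(x,w))^{-1}$.
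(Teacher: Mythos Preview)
Your proposal is correct and follows the standard contraction-mapping approach to the implicit function theorem in Banach spaces. Note, however, that the paper does not supply its own proof of this statement: it is quoted as a known tool (with a citation) and used as a black box, so there is no argument in the paper to compare against. Your sketch is exactly the classical proof one would find in the cited reference.
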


 \vvv

The map $\Phi$ is indeed $\mathcal{C}^1$ near $( \gammasc, \frac{1}{2} \trksc, g_{sc}, u_{sc}, 0 )$. To see this, we first note the following: 
\begin{itemize}
\item The map $u \mapsto du \otimes du$ is $\mathcal{C}^1$ from $\AHC{2}{k+1}{\delta}(M)$ to $\HtoHH{1}{2\delta-2}{k}$. 
\item The map $g \mapsto trK$ is $\mathcal{C}^1$ from $\mathcal{M}^k_{\delta}(M)$ to $\HtoH{1}{\delta-1}{k}$. 
\item The map $g \mapsto \hat K$ is $\mathcal{C}^1$ from $\mathcal{M}^k_{\delta}(M)$ to $\HtoHH{1}{\delta-1}{k}$. 
\end{itemize}
This immediately shows that each line, excluding the fourth line, in the definition of $\Phi$ is $\mathcal{C}^1$. It remains to show that the map $(g,X) \mapsto \Delta_{g, conf} (F(X) X) $ is $\mathcal{C}^1$ from $\mathcal{M}^k_{\delta}(M) \times \bigg( \widehat{\mathcal{X}}^{2}_{\delta}(M) \oplus \mathcal{X}_{\infty}(M) \bigg)$ to $\mathcal{X}^0_{\delta-2}$ near $(g_{sc}, 0)$. This follows directly from the smoothness of $F(X)$ and the following identity of the conformal laplacian (see \cite{york}): 

\begin{equation}
\Delta_{g, conf} (F(X) X)_{\mu} = \Delta_g (F(X) X)_{\mu} + \frac{1}{3} \nabla_{\mu} \Big(\text{div}_g (F(X) X)\Big) + R_{\mu \nu} F(X) X^{\nu}
\end{equation}

\vv


We can then differentiate $\Phi$ at $( \gammasc, \frac{1}{2} \trksc, g_{sc}, u_{sc}, 0 )$ and study its derivative.

\vv
Let $ D\Phi_{sc}$ denote the derivative of $ \Phi$ with respect to the last three components evaluated at $( \gammasc, \frac{1}{2} \trksc, g_{sc}, u_{sc}, 0 )$ where
\begin{multline*}
D \Phi_{sc}: T_{g_{sc}}\mathcal{M}^k_{\delta} \times {\mathcal{A}^{(2,k+1)}_{\delta}} \times  \bigg( \widehat{\mathcal{X}}^{2}_{\delta}(M) \oplus \mathcal{X}_{\infty}(M) \bigg)\\
  \to  \mathcal{A}^{(0,k-1)}_{\delta-2}(M) \times  L^2_{\delta-2} \bigg( [nm_0,\infty); H^k(S^2) \bigg) \times  L^2_{\delta-2} \bigg( [nm_0,\infty) ; \mathcal{H}^k (S^2) \bigg)\\
 \times H^{k-1}(\partial M) \times \Omega^{k-1}(\partial M) \times \mathcal{H}^{k}(\partial M) \times  H^{k}(\partial M) 
 \end{multline*}

\begin{prop} \label{Dphi-iso}
 $D \Phi_{sc}$ is an isomorphism.
\end{prop}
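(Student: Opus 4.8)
The plan is to prove $D\Phi_{sc}$ is an isomorphism by exploiting the approximately triangular structure that the reduction of Section \ref{reduction chapter} builds into $\Phi$: the vector-field block decouples, the Riccati/transport components let one solve for the metric perturbation in terms of the potential perturbation, the artificial field absorbs the single obstruction to surjectivity, and what remains is one scalar elliptic boundary value problem for the linearized potential. Write a tangent vector as $(\dot g,\dot u,\dot X)$ with $\dot X=\dot X_0+\dot X_\infty$, $\dot X_0\in\widehat{\mathcal{X}}^{2}_{\delta}(M)$, $\dot X_\infty\in\mathcal{X}_\infty$, and recall that the base point has $X=0$, so $F(0)=1$, $DF(0)=0$, while $\hat K_{sc}=0$ and $u_{sc}$ is radial. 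Since $D(F(X)X)|_{X=0}\dot X=\dot X$ and $\dot X_\infty$ is conformal Killing on $(M,g_{sc})$, the fourth component of $D\Phi_{sc}$ is $\dot X\mapsto\Delta_{g_{sc},conf}\dot X=\Delta_{g_{sc},conf}\dot X_0$, and $\dot X_0$ occurs in no other component (the sixth component sees $\dot X$ only through $\omega(g,X_\infty)$, which is linear in $X_\infty$). So it suffices to know $\Delta_{g_{sc},conf}\colon\widehat{\mathcal{X}}^{2}_{\delta}(M)\to\mathcal{X}^{0}_{\delta-2}(M)$ is an isomorphism: injectivity follows by integrating $\langle\Delta_{g_{sc},conf}\dot X_0,\dot X_0\rangle$ by parts, using the defining conditions of $\widehat{\mathcal{X}}^{2}_{\delta}(M)$ ($\dot X_0$ tangent to $\partial M$ and $(\mathcal{L}_{\partial_r}\dot X_0)^T=0$ on $\partial M$) together with $\hat K_{sc}=0$ to kill the boundary term $(\mathcal{L}_{\dot X_0}g_{sc})(\partial_r,\cdot)$, whence $\dot X_0$ is a decaying conformal Killing field on the asymptotically flat $(M,g_{sc})$ and therefore vanishes; surjectivity (equivalently, vanishing of the cokernel) follows from Fredholm theory for elliptic systems on asymptotically flat manifolds at the non-exceptional weight $\delta\in(-1,-\tfrac12]$, as for the scalar Laplacian (cf. \cite{maxwell,Bartnik1}).

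\textbf{Integrating out the metric perturbation.} Because $\hat K_{sc}=0$ and $u_{sc}$ is radial, the linearizations of components $2$ and $3$ are first-order transport equations for $\dot{trK}$ and $\dot{\hat K}$ — the explicit first-order linearizations of $trK$, $\hat K$ in $\dot g=r^2(\dot\gamma_\infty+\dot h)$ — driven by $\partial_r\dot u$ and the prescribed right-hand sides, with all angular cross-terms dropping out. Using the linearized induced-metric matching (component $7$), $e^{-2u_{sc}}\dot g|_{\partial M}=f_7+2\dot u|_{\partial M}\,\gammasc$, to fix $\dot g|_{\partial M}$ in terms of $\dot u|_{\partial M}$ and the data, and taking the rest of the boundary data for $\partial_r\dot g|_{\partial M}$ from components $6$ and $8$, one integrates these transport equations — tracking the weighted Bochner norms and the decay at infinity with ODE estimates of the kind used in the proof of Lemma \ref{main-est} — to write $\dot g$ uniquely in terms of $\dot u$, the data, and finitely many boundary constants, the decay requirement $r^{-2}\dot g(r)-\dot\gamma_\infty\in H^2_\delta$ supplying the linear relations that pin those constants down. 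Substituting this (nonlocal, integral) expression for $\dot g$ back into the remaining equations — notably into the linearized Laplacian of component $1$, where the $\dot g$-linear corrections carry a factor $u'_{sc}=O(r^{-2})$ and hence land in a subspace compactly embedded, by proposition \ref{prop-spaces}(c), in the target $\AH{0}{k-1}{\delta-2}(M)$ — reduces the solvability of $D\Phi_{sc}(\cdot)=(\text{data})$ to a problem in $(\dot u,\dot X_\infty)$ alone, with the $\dot g$-dependence now a compact operator acting on $\dot u$.

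\textbf{Absorbing $\dot X_\infty$, and the scalar problem.} In the sixth (Codazzi) component the operator $\dot{\hat K}\mapsto\cancel{div}(\dot{\hat K})$ has $6$-dimensional cokernel — the conformal Killing $1$-forms on $(\partial M,\gamma_{\mathbb{S}^2})$ — which is exactly the image of the $6$-dimensional space $\{\omega(g_{sc},\dot X_\infty):\dot X_\infty\in\mathcal{X}_\infty\}$, $\omega(g_{sc},\cdot)$ being injective on $\mathcal{X}_\infty$ by Lemma \ref{xinf-lemma}. Thus $\dot X_\infty$ is uniquely determined by the cokernel-projection of component $6$, while the complementary part of component $6$ becomes one further boundary relation. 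Collecting components $1$, $5$, the complementary part of $6$, $7$, $8$, solvability of $D\Phi_{sc}(\cdot)=(\text{data})$ becomes equivalent to solvability of a scalar elliptic boundary value problem
\[
\Delta_{g_{sc}}\dot u+\mathcal{K}_1\dot u=\mathcal{F}_0\quad\text{on }M,\qquad \mathcal{B}\dot u+\mathcal{K}_2\dot u=\phi_0\quad\text{on }\partial M,
\]
for $\dot u\in\AHC{2}{k+1}{\delta}(M)$, where $\mathcal{B}$ is assembled from the linearized constraints and $\mathcal{K}_1,\mathcal{K}_2$ are compact. The principal problem is a well-posed (Dirichlet- or Robin-type) elliptic boundary value problem, for which the elliptic theory of Section \ref{elliptic-chapter} — Theorem \ref{elliptic thm} in the Dirichlet case — together with proposition \ref{prop-spaces}(c) yields that the full problem is Fredholm of index $0$, hence an isomorphism as soon as it is injective. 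Setting the data to zero: the vector block gives $\dot X_0=0$; the scalar problem collapses to the homogeneous nonlocal elliptic problem for $\dot u$, whose only solution is $\dot u=0$ (proposition \ref{prop-nonlocalpde-unique}); the transport equations then give $\dot g=0$, and component $6$ forces $\omega(g_{sc},\dot X_\infty)=0$, so $\dot X_\infty=0$ by Lemma \ref{xinf-lemma}. Hence $\ker D\Phi_{sc}=0$, and $D\Phi_{sc}$ is an isomorphism.

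\textbf{Main obstacle.} The crux is the injectivity of the scalar nonlocal problem for $\dot u$ invoked in the last step, i.e. proposition \ref{prop-nonlocalpde-unique}. This is where the near-horizon geometry bites: the boundary operator $\mathcal{B}$ and the nonlocal corrections degenerate as $n\to2$ (the sphere $S_{nm_0}$ approaching the horizon), consistently with the black-hole-uniqueness heuristic that the space of admissible perturbations shrinks as $r_0\to2m_0$, so the uniqueness must be proved by an energy argument uniform in $n>2$ — the technical heart of the paper. The remaining, more routine, points are the isomorphism property of the conformal vector Laplacian on $\widehat{\mathcal{X}}^{2}_{\delta}(M)$ and the careful bookkeeping, in the transport step, of which boundary constants are required and of the fact that every intermediate quantity lies in its prescribed weighted Bochner space.
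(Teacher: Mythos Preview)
Your proposal is correct and follows essentially the same three-block strategy as the paper's proof in Section \ref{proofDPhi}: decouple the conformal-Laplacian equation for $\dot X_0$, integrate the linearized transport equations to express $(\widetilde{trK},\widetilde{\hat K},\tilde g)$ in terms of $\tilde u$, absorb the six-dimensional $\cancel{div}$-cokernel with $\dot X_\infty$, and reduce to the scalar nonlocal operator $\mathcal{P}_{sc}$, shown Fredholm of index $0$ (principal part plus compact) with trivial kernel by Proposition \ref{prop-nonlocalpde-unique}. Two small imprecisions worth flagging: the principal boundary operator in the reduced scalar problem is $\slashed\Delta_{\gamma_{sc}}$ rather than Dirichlet or Robin (the paper factors $\mathcal{P}_1=(\mathrm{Id},\slashed\Delta)\circ\mathcal{Q}$ through the Dirichlet isomorphism of Theorem \ref{elliptic thm}), and there are no ``finitely many boundary constants'' to pin down by decay---the first-order transport equations with initial data at $\partial M$ integrate uniquely and land automatically in the target weighted spaces.
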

\begin{proof} 
The proof of this will be the content of section \eqref{proofDPhi}. 
\end{proof}

\vv

We can now conclude the existence theorem for the extended problem.
\begin{thm}
There exists a neighbourhood $\mathcal{U}$ of $( \gammasc, \frac{1}{2} \trksc)$ in $\mathcal{M}^{k+1}(\partial M) \times H^k(\partial M) $ and a unique $\mathcal{C}^1$ map ${\bf H}: (\gammao, \frac{1}{2} \trko) \mapsto (g, u, X)$ on $\mathcal{U}$ into $ \mathcal{M}^k_{\delta}(M) \times \mathcal{A}^{(2,k+1)}_{\delta}(M) \times  \widehat{\mathcal{X}}^{2}_{\delta}(M) \oplus \mathcal{X}_{\infty}(M) $ satisfying 
\begin{equation}  \Phi(\gammao, \frac{1}{2} \trko, {\bf H}(\gammao, \frac{1}{2} \trko)) = 0, \qquad \text{for all $(\gammao, \frac{1}{2} \trko) \in \mathcal{U}$}  \end{equation}

\end{thm}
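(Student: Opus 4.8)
The plan is to obtain this as an immediate consequence of the implicit function theorem stated above, applied to the map $\Phi$ with the Bartnik data $(\gammao,\frac{1}{2}\trko)$ playing the role of the parameter and the triple $(g,u,X)$ playing the role of the variable to be solved for. Three ingredients are needed: that $\Phi$ is $\mathcal{C}^1$ on a neighbourhood of $(\gammasc,\frac{1}{2}\trksc,g_{sc},u_{sc},0)$; that $\Phi$ vanishes at that point; and that the partial derivative $D\Phi_{sc}$ in the last three slots is an isomorphism there. The first was verified in the discussion preceding this statement, and the third is precisely Proposition \ref{Dphi-iso}, whose proof is deferred to Section \ref{proofDPhi}. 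So within the present step it only remains to check the vanishing of $\Phi$ at the base point and then to unwind the abstract theorem.

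First I would confirm that $(g_{sc},u_{sc})$ lies in the domain $\mathcal{M}^k_{\delta}(M)\times\mathcal{A}^{(2,k+1)}_{\delta}(M)$. Writing $g_{sc}=dr^2+r(r-2m_0)\gamma_{\mathbb{S}^2}=dr^2+r^2(\gamma_{\mathbb{S}^2}+h_{sc})$ with $h_{sc}=-\frac{2m_0}{r}\gamma_{\mathbb{S}^2}$, one checks $h_{sc}\in\HtoHH{2}{\delta}{k}$ because $\delta>-1$ makes the weighted integrals at infinity converge, and likewise $u_{sc}=\frac{1}{2}\ln(1-2m_0/r)\in\mathcal{A}^{(2,k+1)}_{\delta}(M)$; this is precisely where the hypothesis $\delta\in(-1,-\frac{1}{2}]$ is used. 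Since $(\mathfrak{g}_{sc},f_{sc})$ is a static vacuum extension with Bartnik data $(\gammasc,\frac{1}{2}\trksc)$, the pair $(g_{sc},u_{sc})=(f_{sc}^2\mathfrak{g}_{sc},\ln f_{sc})$ solves \eqref{conformal-problem} with that data, so by the Reduction Theorem (Proposition \ref{prop-reduction}) it satisfies \eqref{redeq1}--\eqref{redeq7}. Evaluating $\Phi(\gammasc,\frac{1}{2}\trksc,g_{sc},u_{sc},0)$: taking $X=0$ gives $F(X)X=0$, so the fourth component $\Delta_{g_{sc},conf}(F(X)X)$ vanishes; the projection $X_{\infty}$ of $X=0$ onto $\mathcal{X}_{\infty}$ is zero, so $\omega(g_{sc},X_{\infty})=0$ and the sixth component reduces to the expression in \eqref{redeq5}; and each of the remaining six components vanishes as a direct consequence of \eqref{redeq1}--\eqref{redeq7}. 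Hence $\Phi(\gammasc,\frac{1}{2}\trksc,g_{sc},u_{sc},0)=0$, which is exactly what makes $w=0$ an admissible target value in the abstract theorem.

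With the three ingredients in place, I would apply the implicit function theorem with $E=\mathcal{M}^{k+1}(\partial M)\times H^k(\partial M)$, with $F=T_{g_{sc}}\mathcal{M}^k_{\delta}\times\mathcal{A}^{(2,k+1)}_{\delta}(M)\times\big(\widehat{\mathcal{X}}^{2}_{\delta}(M)\oplus\mathcal{X}_{\infty}(M)\big)$, with $G$ the target Banach space of $\Phi$, and base point $x_0=(\gammasc,\frac{1}{2}\trksc)$, $y_0=(g_{sc},u_{sc},0)$; here one invokes the fact that $\mathcal{M}^{k+1}(\partial M)$ and $\mathcal{M}^k_{\delta}(M)$ are open subsets of the ambient Banach spaces of symmetric tensors (Definition \ref{space-of-metrics}), so that after passing to a chart around $g_{sc}$ the abstract theorem applies verbatim. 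It produces neighbourhoods $U_0$ of $x_0$ and $W_0$ of $0=\Phi(x_0,y_0)$ and a unique $\mathcal{C}^1$ map $(x,w)\mapsto\widetilde{H}(x,w)$ with $\Phi(x,\widetilde{H}(x,w))=w$ on $U_0\times W_0$. Setting $\mathcal{U}:=U_0$ and $\mathbf{H}(\gammao,\frac{1}{2}\trko):=\widetilde{H}\big((\gammao,\frac{1}{2}\trko),0\big)$ gives the desired $\mathcal{C}^1$ map with $\Phi(\gammao,\frac{1}{2}\trko,\mathbf{H}(\gammao,\frac{1}{2}\trko))=0$ on $\mathcal{U}$, and uniqueness of $\mathbf{H}$ among $\mathcal{C}^1$ maps into a fixed neighbourhood of $y_0$ is the uniqueness clause of the implicit function theorem.

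The main obstacle is concentrated entirely in Proposition \ref{Dphi-iso}: all the analytic substance of this existence result lies in showing that $D\Phi_{sc}$ is an isomorphism, which in turn builds on the elliptic solvability theorem for $\mathcal{Q}$ in the $\mathcal{A}$-spaces (Theorem \ref{elliptic thm}) and on the careful treatment, via the space $\mathcal{X}_{\infty}$ and Lemma \ref{xinf-lemma}, of the six-dimensional cokernel of the Codazzi divergence operator. Within the present step the only genuinely delicate points are the domain membership $(g_{sc},u_{sc})\in\mathcal{M}^k_{\delta}(M)\times\mathcal{A}^{(2,k+1)}_{\delta}(M)$, which forces $\delta>-1$, and the bookkeeping needed to pass from the Banach manifolds to their ambient Banach spaces so that the abstract theorem applies as stated.
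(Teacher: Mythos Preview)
Your proposal is correct and follows exactly the paper's approach: the paper's proof is the single line ``Follows from proposition \ref{Dphi-iso} and the implicit function theorem on Banach manifolds,'' and you have simply unpacked the implicit hypotheses (base-point vanishing, domain membership, chart passage) that the paper leaves to the reader. There is no substantive difference between your argument and the paper's.
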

\begin{proof}
Follows from proposition \eqref{Dphi-iso} and the implicit function theorem on Banach manifolds. 
\end{proof}

\vv

\subsection{The Vanishing of $X$ for Modified solutions $(g,u,X)$}

The next step is to show that if $(g,u,X)$ is a modified solution, then $X=0$, yielding a solution $(g,u)$ to the conformal static vacuum Einstein equations.  
\vv

Let $(\gammao, \frac{1}{2} \trko) \in \mathcal{U}$ be Bartnik data and let $(g,u, X):= {\bf H}(\gammao, \frac{1}{2} \trko)$ be the corresponding modified solution. 

\vv

We first find the Ricci curvature of the metric $g$. 

\begin{prop}
The Ricci decomposition of $ g = dr^2 + g(r)$ relative to the foliation defined by $r$ is given by 
\begin{equation} \label{ricci decomp}
Ric = Q \nb \otimes \nb + P \otimes \nb + \nb \otimes P + S
\end{equation}


\begin{equation} \label{ricci decomp2}
Q = 2n(u)^2, \qquad P = 2n(u) \slashed d u + A, \qquad S = 2\slashed du \otimes \slashed du + \frac{1}{2} g(r) H
\end{equation}
where $H$ and $A$ are the unique function on $M$ and 1-form on $M$ tangent to the foliation satisfying:

\begin{equation} \label{eqforA}
\begin{cases} \nabla_n A_k + A_i K^i_k + trK A_k = 0, & \text{on $M$} \\

A = \omega(g, X_{\infty}), & \text{on $\partial M$}
\end{cases}
\end{equation}

\begin{equation} \label{eqforH}
\begin{cases} \nabla_nH + H trK = 2div(A), & \text{on $M$} \\

H = 0, & \text{on $\partial M$} 
\end{cases}
\end{equation}

\end{prop}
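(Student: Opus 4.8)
The plan is to run the first half of the proof of the Reduction Theorem (Theorem \ref{prop-reduction}) while carrying along the extra boundary term $\omega(g,X_\infty)$ that enters through the sixth line of $\Phi$, and to stop before concluding that $A$ and $H$ vanish identically (which would require $X=0$ and is precisely what is not yet known at this stage). First I would unpack the hypothesis $\Phi(\gammao,\frac{1}{2}\trko,g,u,X)=0$ into its eight component equations; in particular the first line gives $\Delta_g u=0$ on $M$ and the seventh gives $g|_{\partial M}=e^{2u}\gammao$. I would then decompose the Ricci tensor of $g=dr^2+g(r)$ relative to the $r$-foliation exactly as in \eqref{ricci-decomposition}, with $Q:=Ric(n,n)$, $P$ the mixed component and $S$ the tangential component, and set $H:=R-2|\nabla u|^2$ and $A:=P-2n(u)\slashed du$, so that the middle and last relations of \eqref{ricci decomp2} hold by definition.

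Next I would compare the geometric identities with the interior components of $\Phi=0$. Equating \eqref{geomeq1} with the second line of $\Phi=0$ forces $R_{00}=Q=2n(u)^2$ on $M$; substituting this into \eqref{geomeq2} and equating with the third line of $\Phi=0$ forces, after exactly the rearrangement performed in the proof of Theorem \ref{prop-reduction}, $S=2\slashed du\otimes\slashed du+\frac{1}{2}g(r)H$ on $M$. The Ricci decomposition is now in the form hypothesised in Lemma \ref{AandHlemma}, and since $u$ is harmonic that lemma applies verbatim, yielding the two transport equations
\begin{equation*}
\nabla_n A_k + A_i K^i_k + trK\, A_k = 0, \qquad \nabla_n H + H\, trK = 2\, div(A)
\end{equation*}
on all of $M$, which are the interior equations of \eqref{eqforA} and \eqref{eqforH}.

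It remains to identify the boundary data. Because $g|_{\partial M}=e^{2u}\gammao$, the scalar curvature $R_{\partial M}$ of $(\partial M,e^{2u}\gammao)$ appearing in $\Phi$ equals the intrinsic scalar curvature $R_{S_{nm_0}}$ of the boundary leaf; restricting the Gauss equation \eqref{geomeq3} to $\partial M$, inserting $R_{00}=2n(u)^2$, and combining with the fifth line of $\Phi=0$ then gives $R=2|\nabla u|^2$ on $\partial M$, i.e. $H=0$ on $\partial M$. Likewise the Codazzi equation \eqref{geomeq4} reads $\cancel{div}(\hat K)-\frac{1}{2}\slashed d\, trK=P$ on $\partial M$, whereas the sixth line of $\Phi=0$ reads $\cancel{div}(\hat K)-\frac{1}{2}\slashed d\, trK=2n(u)\slashed du+\omega(g,X_\infty)$; subtracting yields $P=2n(u)\slashed du+\omega(g,X_\infty)$, hence $A=P-2n(u)\slashed du=\omega(g,X_\infty)$ on $\partial M$. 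Finally, \eqref{eqforA} is a linear first-order ODE in $r$ for $A$ along the foliation with Cauchy data prescribed on $\partial M$, and once $div(A)$ is known \eqref{eqforH} is a linear ODE for $H$ with Cauchy data $0$ on $\partial M$, so the existence and uniqueness theorem for ODEs identifies $A$ and $H$ as the unique solutions. The main point to watch is purely bookkeeping: keeping the sign and placement of the $\omega(g,X_\infty)$ term straight and correctly identifying $R_{\partial M}$ with the intrinsic leaf curvature via the seventh equation; no new analytic difficulty arises beyond what is already handled in the proof of Theorem \ref{prop-reduction}.
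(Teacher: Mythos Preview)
Your proposal is correct and follows essentially the same approach as the paper: define $A:=P-2n(u)\slashed du$ and $H:=R-2|\nabla u|^2$, deduce $Q=2n(u)^2$ and $S=2\slashed du\otimes\slashed du+\tfrac{1}{2}g(r)H$ from the interior equations of $\Phi=0$ together with \eqref{geomeq1}--\eqref{geomeq2}, invoke Lemma~\ref{AandHlemma} for the transport equations, and read off the boundary data by comparing the fifth and sixth lines of $\Phi=0$ with \eqref{geomeq3}--\eqref{geomeq4}. One small wording slip: you say the last relation in \eqref{ricci decomp2} (for $S$) holds ``by definition'' of $H$, but it does not---only the relation for $P$ is definitional; the relation $S=2\slashed du\otimes\slashed du+\tfrac12 g(r)H$ genuinely requires the comparison of \eqref{geomeq22} with the third line of $\Phi=0$ that you carry out in the next paragraph, so just remove that phrase.
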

\begin{proof}
Lemma \eqref{AandHlemma} shows that $A$ and $H$ satisfy the desired transport equations on $M$. The boundary condition for $A$ and $H$ follow by comparing equations 

\[ 2|\slashed \nabla u|^2 - 2(\partial_r u)^2 - |\hat K|^2 - R_{\partial M} + \frac{1}{2}{trK}^2 = 0, \quad \text{and} \quad 2(\partial_r u) \slashed du - \cancel{div} (\hat K) + \frac{1}{2} \slashed dtrK  + \omega(g,X_{\infty})=0 \]

with equations \eqref{geomeq3} and \eqref{geomeq4}. 
\end{proof}
 
 \vv

The relation between the Ricci curvature of $g$ and $u$ as described in the above proposition leads to the following regularity result. 

\begin{prop} \label{regprop} The following holds for any modified solution $(g,u,X)$. 

\begin{itemize}
\item The Ricci curvature of $g$ is $\mathcal{C}^1$ away from the boundary. Furthermore, there exists a universal constant $C>0$ such that for $R>nm_0$, 

\begin{equation} \label{regRic} 
\sup_{r>R} r^2( |Ric| + r|\nabla Ric|) \leq C\norm{\gamma_{\infty} - \gamma_{\mathbb{S}^2}}_{\mathcal{H}^k(\partial M)} + o(R^{\delta})
\end{equation}
as $R$ goes to $\infty$.

\item The vector field $F(X) X$ lies in $\mathcal{X}^3_{\delta}(M)$.
\end{itemize}
\end{prop}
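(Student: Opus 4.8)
I would deduce both assertions from the explicit form of the Ricci curvature established in the preceding proposition: substituting $du=(\partial_r u)\,\nb+\slashed d u$ into \eqref{ricci decomp}--\eqref{ricci decomp2} gives
\begin{equation*}
\mathrm{Ric}_g \;=\; 2\,du\otimes du \;+\; A\otimes \nb + \nb\otimes A \;+\; \tfrac12\, H\, g(r),
\end{equation*}
with $g(r)$ the induced metric on the leaves $S_r$ and with $A,H$ governed by the transport equations \eqref{eqforA}, \eqref{eqforH}; so everything reduces to estimating $du$, $A$, $H$ and their derivatives and to controlling the regularity of $g,u,A,H$.

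\emph{First bullet.} For the $C^1$ regularity I would run the static-vacuum bootstrap of \cite{murchadha}, adapted to the extra lower-order terms $A,H$: the evolution equations \eqref{geomeq5}, \eqref{geomeq22}, \eqref{geomeq1} express $\partial_r g$, $\partial_r\hat K$, $\partial_r trK$ algebraically in $(g,\hat K,trK,u,\slashed d u,\partial_r u,A,H)$; the equation $\Delta_g u=0$ reads $\partial_r^2 u=-trK\,\partial_r u-\slashed\Delta_{g(r)}u$; and \eqref{eqforA}, \eqref{eqforH} reduce, via the Christoffel simplification from Lemma \ref{AandHlemma}, to $\partial_r A_k=-trK\,A_k$ and $\partial_r H=-trK\,H+2\,div(A)$; thus every $r$-derivative of these quantities can be solved for, losing only finitely many angular derivatives at each step. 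Since $g\in\mathcal{M}^k_\delta(M)$, $u\in\mathcal{A}^{(2,k+1)}_\delta(M)$ and $A,H$ carry the boundary values $\omega(g,X_\infty)$ and $0$ on $\partial M$, the bootstrap makes $g,u,A,H$ smooth in $r$ with values in a fixed angular Sobolev space, which for $k\ge5$ embeds into $C^3$; hence $\mathrm{Ric}_g\in C^1$ on $M\setminus\partial M$ (the radial part of the bootstrap in fact closes up to $\partial M$, which I reuse below). For \eqref{regRic} I would bound each term in the $g$-norm for $r>R$, $R\to\infty$: Proposition \ref{prop-spaces}(a) together with $\partial_r^2 u=-trK\,\partial_r u-\slashed\Delta_{g(r)}u$ gives $|du|_g=o(r^{\delta-1})$ and $|\nabla^2 u|_g=o(r^{\delta-2})$, so the $du\otimes du$ term contributes $o(r^{2\delta})=o(R^\delta)$ (as $2\delta<\delta<0$); solving $\partial_r A_k=-trK\,A_k$ yields $A_k(r)=A_k(nm_0)\sqrt{\det g(nm_0)/\det g(r)}$, hence $|A|_g=O(r^{-3})$, $|\nabla A|_g=O(r^{-4})$, contributing $O(r^{-1})=o(R^\delta)$ since $\delta>-1$; and for $H=R-2|\nabla u|^2$ the Gauss equation $R=2R_{00}+R_{S_r}-\tfrac12 trK^2+|\hat K|^2$, combined with \eqref{prop-space-trK-K} (so $|trK-\tfrac2r|,|\hat K|_g,|\nabla u|=o(r^{\delta-1})$) and $R_{S_r}=r^{-2}R_{\gamma_\infty+h(r)}=r^{-2}(R_{\gamma_\infty}+o(r^\delta))$, gives $H=r^{-2}(R_{\gamma_\infty}-2)+o(r^{\delta-2})$ and, likewise, $r^3\nabla H$ converging uniformly to a tensor built from $R_{\gamma_\infty}-2$ and $\slashed\nabla R_{\gamma_\infty}$; since $\gamma\mapsto R_\gamma$ is smooth, $R_{\gamma_{\mathbb{S}^2}}=2$, and $\mathcal{H}^k(S^2)\hookrightarrow C^3(S^2)$ for $k\ge5$, this term is $\le C\norm{\gamma_\infty-\gamma_{\mathbb{S}^2}}_{\mathcal{H}^k(\partial M)}+o(R^\delta)$. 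Summing the three contributions gives \eqref{regRic}.

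\emph{Second bullet.} I would set $w:=F(X)X$, which by the fourth line of $\Phi=0$ satisfies $\Delta_{g,conf}w=0$ on $M$. On $\{r\ge nm_0+2\}$ one has $\eta\equiv1$ and $F(X)=e^{-r^4|X|^2}$, so the elementary bound $s\,e^{-r^4 s^2}\le(2e)^{-1/2}\,r^{-2}$ gives $|w|_{g_{sc}}\le Cr^{-2}$ there, while $|w|_{g_{sc}}$ is bounded on the compact set $\{r\le nm_0+2\}$; feeding this into scaled interior elliptic estimates for $\Delta_{g,conf}w=0$ — whose coefficients, including the zeroth-order term $\mathrm{Ric}_g$, are controlled uniformly near infinity by the first bullet and by $g\in\mathcal{M}^k_\delta(M)$ — upgrades it to $|\nabla^j w|_g=O(r^{-2-j})=o(r^{\delta-j})$ for $0\le j\le3$ (using $\delta>-1$), giving the required weighted decay at infinity. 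On $\{r\le nm_0+1\}$, $\eta\equiv0$, so $w=X=X_0+X_\infty$ with $X_\infty$ smooth, and $\Delta_{g,conf}X_0=-\Delta_{g,conf}X_\infty$; by the bootstrap the right-hand side is smooth in $r$ with ample angular regularity, and the conditions defining $\widehat{\mathcal{X}}^2_\delta$ ($X$ tangent to $\partial M$ and $(\mathcal{L}_{\dd{r}}X)^T=0$) together with $\Delta_{g,conf}$ constitute a regular elliptic boundary value problem, so elliptic regularity up to $\partial M$ yields $X_0\in H^3$ there; on the intervening compact collar $w\in H^3$ by interior regularity. Patching the three regions gives $F(X)X\in\mathcal{X}^3_\delta(M)$.

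\emph{Main difficulty.} The delicate step is the quantitative decay of $H$ and of $\nabla\mathrm{Ric}_g$: one has to recognise that the only part of $\mathrm{Ric}_g$ failing to decay faster than $r^{-2}$ is the one generated by $\gamma_\infty$ not being the round metric, and that it is controlled linearly by $\norm{\gamma_\infty-\gamma_{\mathbb{S}^2}}_{\mathcal{H}^k(\partial M)}$ through the smoothness of $\gamma\mapsto R_\gamma$ — this is precisely where the hypothesis $k\ge5$ is used and what makes \eqref{regRic} the object needed in the subsequent proof that $X=0$. A secondary, more technical point is verifying that the regularity bootstrap closes all the way up to $\partial M$ in the radial direction, since otherwise the elliptic-regularity step in the second bullet would only return $H^2$ rather than $H^3$.
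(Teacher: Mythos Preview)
Your first-bullet argument is essentially the paper's: solve the transport equations for $A$ and $H$, bootstrap $u$ via $\partial_r^2 u=-trK\,\partial_r u-\slashed\Delta_{g(r)}u$, and identify the leading term of $H$ from the Gauss equation as $r^{-2}(R_{\gamma_\infty}-2)$, whence the $\norm{\gamma_\infty-\gamma_{\mathbb{S}^2}}_{\mathcal{H}^k}$ bound. One overstatement: the bootstrap does \emph{not} make $g,u,A,H$ smooth in $r$ with values in a fixed angular Sobolev space---each gain of a radial derivative costs angular regularity (the elliptic equation feeds $\slashed\Delta u$ back), so you can iterate only finitely many times. This is harmless here since $k\ge 5$ gives the two or three extra $r$-derivatives you need, but the word ``smooth'' and the appeal to the full Murchadha bootstrap (which is for genuine static solutions, where analyticity kicks in) should be replaced by the finite iteration the paper actually runs.

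For the second bullet the paper takes a shorter route than you do. It first uses the Riccati equations \eqref{redeq2}--\eqref{redeq3} (now with $\partial_r u\in H^2_{\delta-1}$ on the right) to push $trK,\hat K$ into $H^3$ in $r$, hence $g\in C^3$ on $M$, and then simply invokes \emph{interior} elliptic estimates for $\Delta_{g,conf}(F(X)X)=0$ against this $C^3$ metric. Your route---the pointwise bound $|F(X)X|\le Cr^{-2}$ from $s\,e^{-r^4s^2}\lesssim r^{-2}$, followed by scaled interior estimates for the decay and a separate boundary-regularity argument---is more elaborate. The pointwise bound is a nice device (the paper uses the same dichotomy later, just before the integration-by-parts), but your scaled interior estimates still need $g\in C^2$ or better, so you are tacitly using the same bootstrap the paper makes explicit. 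The genuinely extra step you add, that the conditions $X^\perp=0$, $(\mathcal{L}_{\partial_r}X)^T=0$ form a complementing boundary condition for $\Delta_{g,conf}$ so as to obtain $H^3$ up to $\partial M$, is plausible but not verified; the paper sidesteps this entirely by appealing only to interior estimates---which is all that is required, since the sole downstream use of $F(X)X\in\mathcal{X}^3_\delta$ is the Hardy-inequality argument of Lemma~\ref{confkill=0}, carried out on $[R,\infty)\times S^2$ for large $R$.
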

\begin{proof}
We first find explicit expressions for $A$ and $H$. Letting $(r,\theta^1, \theta^2)$ be fermi coordinates, we compute for $i=1,2$, 
\begin{align}
\nabla_n A_i &= \partial_r A_i - \Gamma_{0i}^j A_j\\
&= \partial_r A_i - K^j_i A_j
\end{align}

 Equation \eqref{eqforA} for $A$ then becomes 
\begin{equation} 
\begin{cases} \partial_r A_i + trK A_i = 0, & \text{on $M$} \\

A = \omega(g, X_{\infty}), & \text{on $\partial M$}
\end{cases}
\end{equation}
which gives 
\begin{equation}
A_i(r) = \frac{1}{L(r)} \omega_i
\end{equation}

where $L(r) := \exp\left( \ds \int_{nm_0}^r trK(s) ds\right)$. 

\vv

We then solve equation\eqref{eqforH} for $H$ to obtain 

\begin{equation} 
H(r) = \frac{1}{L(r)} \int_{nm_0}^r L(s) 2 div(A(s)) ds
\end{equation}

From proposition \eqref{prop-spaces}, we have 

\begin{equation} 
\left|trK - \frac{2}{r}\right| \in \HtoH{1}{\delta-1}{k}, \qquad \hat K \in \HtoHH{1}{\delta-1}{k}
\end{equation}

It then follows that $A \in \HtoHH{2}{loc}{k}$ and $div( A) \in \HtoHH{1}{loc}{k-1}$, which in turn implies that $H \in \HtoH{2}{loc}{k-1}$.

\vv

Furthermore, since $k\geq 5$, the Sobolev embedding described in proposition \eqref{prop-spaces} imply that $trK$ and $\hat K$ are continuous in $r$, have 3 continuous angular derivatives, and satisfy 
\begin{equation} \label{reg-trk}
trK = \frac{2}{r} + o(r^{\delta-1}), \quad |\hat K| = o(r^{\delta-1})
\end{equation}

Using the asymptotics of $trK$ described above, we derive the asymptotics of $L$ to be

\begin{equation}
L(r)^{-1} = \mathcal{O}(r^{-2}), \quad \partial_r (L(r)^{-1}) = \mathcal{O}(r^{-3}), \quad |\slashed \nabla (L(r)^{-1})| = \mathcal{O}(r^{-3})
\end{equation}
as $r$ goes to $\infty$. 

\vv

It follows that $A$ and $H$ have continuous first derivatives and satisfy 

\begin{equation} \label{reg-A}
|A(r)| = \mathcal{O}(r^{-3}),\quad  |\partial_r A(r)| = \mathcal{O}(r^{-4}), \quad |\slashed \nabla A(r)| = \mathcal{O}(r^{-4})
\end{equation}

\begin{equation}
H(r) = \mathcal{O}(r^{-2}), \quad \partial_r H(r) = \mathcal{O}(r^{-3}), \quad |\slashed \nabla H(r)| = \mathcal{O}(r^{-3})
\end{equation}

\vv

By virtue of the fact that $(g,u,X)$ is a modified solution, we have 

\begin{equation} \label{reg1}
\partial_{r}^2 u + trK \partial_r u + \slashed \Delta_{g(r)} u = 0
\end{equation}

The above equation together with the fact that $u \in \mathcal{A}^{(2,k+1)}_{\delta}(M)$ then implies $\partial_r^2 u  \in \HtoH{1}{\delta-2}{k-2}$. This in particular implies that $\partial_r u, |\slashed \nabla u| \in \HtoH{2}{\delta-1}{k-2}$. Using the Sobolev embeddings again and the fact that $k\geq 5$, we deduce that $\partial_r u, |\slashed \nabla u|$ have continuous first derivatives and satisfy 

\begin{equation} \label{reg-u} 
\partial_r u= o(r^{\delta-1}), \quad \partial_r^2 u = o(r^{\delta-2}), \quad |\slashed \nabla \partial_r u| = o(r^{\delta-2})
\end{equation}

\vv

Having achieved the asymptotics for $u$, we can now derive an explicit expression for the leading order term for $H$ and $\nabla H$. Using the Gauss equation, we get 

\begin{align}
r^2 H(r) &= r^2 R{g(r)} -2 + 2r^2 (\partial_r u)^2 + r^2 |\hat K|^2 - \frac{r^2}{2} \left( trK - \frac{2}{r} \right)^2 + 2r \left(trK - \frac{2}{r}\right)\\
&= r^2 R_{g(r)} -2 + o(r^{\delta})
\end{align}

where we have used equations \eqref{reg-u} and \eqref{reg-trk}. 
\vv

It then follows that 

\begin{equation} \label{reg-H}
 H(r) =r^{-2} ( R_{\gamma_{\infty}} -2) + o(r^{\delta-2}), \quad |\nabla H(r)| = r^{-3} \Big( |R_{\gamma_{\infty}} -2|^2 + |\slashed d R_{\gamma_{\infty}}|^2_{\gamma_{\infty}} \Big)^{1/2}  + o(r^{\delta-3})
\end{equation}
where $ | \cdot |_{\gamma_{\infty}}$ is the $\gamma_{\infty}$-norm.

\vv

In view of the expression of the $Ric$ in terms of $u$, $A$, and $H$ in equations \eqref{ricci decomp} and \eqref{ricci decomp2}, we deduce the desired regularity of $Ric$, namely that it is $\mathcal{C}^1$ away from the boundary. We are now in a position to prove equation \eqref{regRic}. Using again equations \eqref{ricci decomp} and \eqref{ricci decomp2}, we estimate $|Ric|$ and $|\nabla Ric|$: for some universal constant $C>0$, we have 

\begin{equation}
|Ric| \leq C (|\nabla u|^2 + |A| + |H|), \quad |\nabla Ric| \leq C ( |\nabla^2 u|^2 + |\nabla A| + |\nabla H|)
\end{equation}

Using the asymptotics of $u$, $A$, and $H$ laid out in equations \eqref{reg-u}, \eqref{reg-A} and \eqref{reg-H}, we get that for $R>nm_0$, 

\begin{align}
\sup_{r>R} r^2( |Ric| + r|\nabla Ric|) &\leq C \sup_{S^2} \left( |R_{\gamma_{\infty}}-2| + |\slashed d R_{\gamma_{\infty}}|_{\gamma_{\infty}} \right) + o(R^{\delta})\\
& \leq C \norm{\gamma_{\infty} - \gamma_{\mathbb{S}^2}}_{\mathcal{C}^3(S^2)} + o(R^{\delta})\\
&\leq C \norm{\gamma_{\infty} - \gamma_{\mathbb{S}^2}}_{\mathcal{H}^k(S^2)} + o(R^{\delta})
\end{align}
as $R$ goes to $\infty$. In the last line, we have used again the Sobolev embeddings and the fact that $k\geq 5$. We have also allowed the constant $C$ to change from line to line while staying universal, i.e. independent of $g$ and $u$. 

\vvv

We turn our attention to the second statement of the proposition. It suffices to show that $X_0$ admits 3 derivatives. By virtue of the fact that $(g,u,X)$ is a modified solution, we have 

\begin{equation}
\partial_r trK + \frac{1}{2} trK^2 + |\hat K|^2 = -2(\partial_r u)^2
\end{equation}

\begin{equation}
\nabla_r \hat K + trK \hat K = -2\slashed du \otimes \slashed du - g(r) ((\partial_r u)^2 - |\nabla u|^2)
\end{equation}

Thanks to equation \eqref{reg-u}, we have that $|\nabla u| \in \HtoH{2}{\delta-1}{k-2}$ and so admits 2 radial derivatives. Hence, the above equations directly impy that $trK$ and $\hat K$ admit 3 radial derivative and, in fact, live in $\HtoH{3}{\delta-1}{k}$ and $\HtoHH{3}{\delta-1}{k}$ respectively. Due to the evolution equation 

\begin{equation}
\partial_r g(r) = trK \, g(r) + 2\hat K,
\end{equation}
we deduce that $g(r) \in \HtoHH{4}{loc}{k}$ and so admits 4 radial derivatives. By the Sobolev embedding and the fact that $k\geq 5$, this implies that $g(r), \partial_r g(r), \partial_r^2 g(r)$ are continuous in $r$ and are $\mathcal{C}^3$ on the sphere. We conclude that the metric $g = dr^2+g(r)$ is of class $\mathcal{C}^3$. Since $F(X) X$ satisfy the elliptic equation $\Delta_{g,conf} F(X) X = 0$ with respect to a $\mathcal{C}^3$ metric, standard localized interior estimates show that $F(X) X$ lives in $\mathcal{X}^3_{\delta}(M)$ (see for example \cite{choquet} appendix II). 


\end{proof}

\begin{remark}
Stronger regularity results can be proven for the modified solution $(g,u, X)$. Specifically, $g,u,$ and $X$ are in fact smooth away from the boundary. Nonetheless, the above regularity result is sufficient for our purpose. We will use it to show the nonexistence of nontrivial conformal Killing fields on $(M,g)$ (see lemma \eqref{confkill=0}). 
\end{remark}

\vv

\vv

We now show that $X = 0$. Letting $\bar X := F(X) X$, we note that $\Delta_{g, conf} \bar X = 0$ on $M$ in light of the fourth line in the definition of $\Phi$ and the fact that $(g,u,X)$ is a modified solution. We decompose $X$ as follows, 

\begin{equation}
X = X_0 + X_{\infty}
\end{equation}

where $X_0 \in \widehat{\mathcal{X}}^2_{\delta}(M)$ and $X_{\infty} \in \mathcal{X}_{\infty}$. We make the following observations: 

\begin{itemize}
\item If $\limsup_{r\to \infty} r^4|X|^2 =\infty$, then $F(X)$ decays exponentially in $r$. Since $|X| = |X_{\infty} + X_0|= O(r^2)$ by lemma \ref{xinf-lemma}, we deduce that $|\bar X|$ decays exponentially in $r$. 
\item If $\limsup_{r\to \infty} r^4 |X|^2 < \infty$, then $F(X) = O(1)$ and so $|\bar X| = O(r^{-2})$.
\end{itemize}

The above implies that $|\bar X| = O(r^-2)$. This allows us to perform the following integration-by-parts computation:

\begin{align} 
0 &= \int_{M} \bar X^{\mu} \Delta_{g,conf} \bar X_{\mu} dV_g\\
&= -\frac{1}{2} \int_{M} \left| \conflie{g}\bar X \right|^2 dV_g - \int_{\partial M} \conflie{g} \bar X (\bar X, \dd{r}) d\sigma_{g(nm_0)} \label{intbypartsX}
\end{align}

The above calculation is valid since $ \left| \conflie{g}\bar X \right|^2$ is integrable and the boundary integral at infinity vanishes. 

\vv

We now compute $\conflie{g} \bar X (\bar X, \dd{r})$ on $\partial M$ in order to evaluate the boundary integral in equation \eqref{intbypartsX}. Recall that we decomposed $X$ as follows, 

\begin{equation}
X = X_0 + X_{\infty}
\end{equation}

where $X_0 \in \widehat{\mathcal{X}}^2_{\delta}(M)$ and $X_{\infty} \in \mathcal{X}_{\infty}$. We also have that $X_0$ and $X_{\infty}$ satisfy the following on $\partial M$. 

\begin{equation}
X_{\infty} = X_{CK}, \quad \left( \mathcal{L}_{\dd{r}} X_0\right)^{T} = 0
\end{equation}
\begin{equation}
  \mathcal{L}_{\dd{r}} X_{\infty} = f'(n m_0) \cancel{div}_{\gamma_{\mathbb{S}^2}}(X_{CK}) \dd{r} + h'(nm_0) X_{CK} = \frac{1}{2}\cancel{div}_{\gamma_{\mathbb{S}^2}}(X_{CK}) \dd{r}
\end{equation}
\begin{equation}
g(X_0, \dd{r}) = g(X_{\infty}, \dd{r}) = 0
\end{equation}

Using the above, we compute that the integrand of the boundary integral appearing in equation \eqref{intbypartsX} satisfies the following on $\partial M$. 

\begin{align}
\conflie{g} \bar X(\bar X, \dd{r})  &= \conflie{g} X( X, \dd{r}) \\
&= \mathcal{L}_X g (X, \dd{r}) - \frac{2}{3} \text{div} X \,  g(X, \dd{r})\\
&= g(\mathcal{L}_{\dd{r}} X, X)\\
&= g(\mathcal{L}_{\dd{r}} X_0, X)+ g(\mathcal{L}_{\dd{r}} X_{\infty}, X)\\
&=0
\end{align}

where the fact that $\bar X = X$ near $\partial M$ was used in the first equality.  
It then follows that 

\begin{equation}
0 = \frac{1}{2} \int_{M} \left| \conflie{g} \bar X \right|^2 dV_g 
\end{equation}

implying that $\bar X$ is conformal Killing on $(M,g)$. However, the next lemma shows that if $\gamma_{\infty}$ is close enough to $\gamma_{\mathbb{S}^2}$, then there does not exist a non trivial conformal Killing field on $(M,g)$ that vanishes at $\infty$. 

\begin{lem} \label{confkill=0}
Let $\delta <0$. There exists $\epsilon>0$ such that the following holds. \\
Let $g \in M^k_{\delta} (M)$ satisfy the statement of proposition \eqref{regprop} (i.e. $Ric$ is $\mathcal{C}^1$ away from $\partial M$ and equation \eqref{regRic} holds). Suppose also that $\norm{\gamma_{\infty} - \gamma_{\mathbb{S}^2}}_{\mathcal{H}^k(S^2)} < \epsilon$. If $Z \in \mathcal{X}^{3}_{\delta}(M)$ is a conformal Killing vector field on $(M,g)$, then $Z = 0$. 
\end{lem}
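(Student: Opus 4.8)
The plan is to reduce the conformal Killing equation to a closed first‑order linear ODE system for the $2$‑jet of $Z$ along the radial geodesics of $g$, and then to integrate that system inward starting from spatial infinity.

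First I would prolong the equation. Since $\dim M=3$, a vector field with $\conflie{g}Z=0$ — equivalently $\nabla_{(a}Z_{b)}=\tfrac{1}{3}(\operatorname{div}_g Z)\,g_{ab}$ — satisfies the classical prolonged system: introducing $\omega_{ab}:=\nabla_{[a}Z_{b]}$, $\lambda:=\tfrac{1}{3}\operatorname{div}_g Z$, and a $1$‑form $\mu$ built from $\nabla\lambda$ and the Schouten tensor of $g$, the covariant derivatives of $(Z,\omega,\lambda,\mu)$ are given algebraically in terms of $(Z,\omega,\lambda,\mu)$, the metric, and the curvature. Schematically $\nabla\mathcal J=\mathfrak A(\mathcal J)$ where $\mathcal J:=(Z,\omega,\lambda,\mu)$ and $\mathfrak A$ is first order: its structural part is universal (it only contracts with $g_{ab}$ and $\delta^a_b$), while the remainder has coefficients bounded by $|\mathrm{Rm}_g|+|\nabla\mathrm{Rm}_g|$, which in three dimensions is $\lesssim|\operatorname{Ric}_g|+|\nabla\operatorname{Ric}_g|$. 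Since $Z$ also solves the second‑order elliptic system $\Delta_{g,conf}Z=\operatorname{div}_g(\conflie{g}Z)=0$ and, by Proposition \ref{regprop}, $g$ is $\mathcal C^3$ away from $\partial M$, elliptic regularity makes $Z$ of class $\mathcal C^3$ on $M\setminus\partial M$; moreover $Z\in\mathcal X^3_\delta(M)$ with $\delta<0$, so the weighted Sobolev embedding for $H^3_\delta(M)$, together with the elliptic estimates, gives $|\nabla^l Z|=o(r^{\delta-l})$ and hence $\mathcal J\to 0$ at infinity along every ray.

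Next I would restrict to the radial geodesics. Writing $g=dr^2+g(r)$ on $[nm_0,\infty)\times S^2$, each curve $c_\theta(r):=(r,\theta)$ is a unit‑speed geodesic and these cover $M\setminus\partial M$. Fixing $R_1:=nm_0+1$ and a parallel orthonormal frame along $c_\theta|_{[R_1,\infty)}$, the prolonged system becomes an ODE $J'(r)=\bigl(\mathfrak A_0+\mathcal E(r)\bigr)J(r)$ on $[R_1,\infty)$, where $J$ collects the frame components of $\mathcal J$, the matrix $\mathfrak A_0$ is a fixed constant — the \emph{same} matrix one obtains for the Euclidean metric, since in an orthonormal frame the structural part of $\mathfrak A$ only sees $\delta_{ab}$ — and $\|\mathcal E(r)\|\lesssim(|\operatorname{Ric}_g|+|\nabla\operatorname{Ric}_g|)(c_\theta(r))$. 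By \eqref{regRic} we have $|\operatorname{Ric}_g|+|\nabla\operatorname{Ric}_g|=O(r^{-2})$, so $\mathcal E\in L^1([R_1,\infty))$. The solutions of the unperturbed model $J'=\mathfrak A_0 J$ are precisely the $2$‑jets, restricted to a Euclidean ray, of the conformal Killing fields of $\R^3$ (translations, rotations, the dilation, and the special conformal fields); these are polynomial in $r$ of degree at most $2$, so \emph{no} nonzero one tends to $0$ as $r\to\infty$. An $L^1$ perturbation cannot change this: by Levinson's asymptotic‑integration theorem the solutions of $J'=(\mathfrak A_0+\mathcal E(r))J$ are asymptotic to those of $J'=\mathfrak A_0 J$, so the perturbed system likewise admits no nonzero solution decaying at infinity. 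Combined with $\mathcal J\to 0$ from the previous paragraph, this forces $J\equiv 0$ on each $c_\theta|_{[R_1,\infty)}$, i.e. $Z\equiv 0$ on $\{r\ge R_1\}$. Finally, the prolonged system is a regular linear ODE along every radial geodesic contained in the connected set $M\setminus\partial M$, and $\mathcal J$ vanishes on the open set $\{r>R_1\}$; ODE uniqueness then propagates $\mathcal J\equiv 0$, hence $Z\equiv 0$, to all of $M\setminus\partial M$, and $Z=0$ on $\partial M$ by continuity.

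The hypothesis $\|\gamma_\infty-\gamma_{\mathbb S^2}\|_{\mathcal H^k(S^2)}<\epsilon$, together with \eqref{regRic}, is what guarantees the curvature error $\mathcal E$ is summable with a uniformly controlled tail. The step I expect to be the main obstacle is the passage from the flat model to the perturbed system: one must argue rigorously that an $L^1$ curvature perturbation cannot produce a solution of the prolonged system that decays at infinity. A crude Gronwall estimate does not suffice here, since $\mathfrak A_0$ carries nontrivial Jordan blocks (polynomially growing modes, rather than a uniform exponential dichotomy), so one genuinely needs the asymptotic theory of linear systems; alternatively, this step can be replaced by a weighted‑elliptic argument — deriving a Fredholm bound $\|Z\|_{\mathcal X^3_\delta}\lesssim\|\Delta_{g,conf}Z\|_{\mathcal X^1_{\delta-2}}$ for $g$ near $g_{sc}$ out of a triviality‑of‑kernel statement for $\Delta_{g_{sc},conf}$ at the non‑exceptional weight $\delta\in(-1,-\tfrac{1}{2}]$, the closeness to $g_{sc}$ entering there through the openness of the lower bound.
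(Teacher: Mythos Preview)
Your approach is genuinely different from the paper's, and the crucial step has a real gap.

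The paper extracts from the prolongation only the third--order identity $\nabla^3 Z = A\cdot\nabla Z + B\cdot Z$ with $A,B$ linear in $\operatorname{Ric},\nabla\operatorname{Ric}$, and closes an $L^2$ loop on $\{r\ge R\}$: a weighted Hardy inequality gives
\[
\int_{\{r\ge R\}} r^{-2\delta-3}\bigl(|Z|^2+r^2|\nabla Z|^2\bigr)\,dV_g \;\le\; C\int_{\{r\ge R\}} r^{-2(\delta-3)-3}|\nabla^3 Z|^2\,dV_g,
\]
while the third--order identity together with \eqref{regRic} bounds the right side by $C(\epsilon+R^{2\delta})$ times the left. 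For $\epsilon$ small and $R$ large this constant is $<1$, so $Z=0$ on $\{r\ge R\}$; unique continuation for $\Delta_{g,\mathrm{conf}}Z=0$ then finishes.

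Your Levinson step, as stated, is false. Because $\mathfrak A_0$ is \emph{nilpotent} (Euclidean conformal Killing fields are polynomial along rays), an $L^1$ perturbation certainly \emph{can} create decaying solutions. Take the $2\times 2$ model $\mathfrak A_0=\bigl(\begin{smallmatrix}0&1\\0&0\end{smallmatrix}\bigr)$ and $\mathcal E(r)=\bigl(\begin{smallmatrix}0&0\\ c/r^{2}&0\end{smallmatrix}\bigr)$ with any $c>0$: then $\mathcal E\in L^1([1,\infty))$, the system is $j_1''=c\,j_1/r^2$, and $j_1=r^{\alpha_-}$ with $\alpha_-=\tfrac12\bigl(1-\sqrt{1+4c}\,\bigr)<0$ is a decaying solution for \emph{every} $c>0$. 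Levinson's theorem gives nothing here precisely because the eigenvalue dichotomy degenerates when all eigenvalues coincide; you flag this as the obstacle but still assert the conclusion.

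The fixable version uses $\epsilon$ quantitatively, not just to make $\mathcal E$ summable. By \eqref{regRic} the leading $r^{-2}$ part of $\mathcal E$ has coefficient $\lesssim\|\gamma_\infty-\gamma_{\mathbb S^2}\|$, so the spurious decaying modes of the perturbed system go like $r^{\alpha_-(\epsilon)}$ with $\alpha_-(\epsilon)\to 0^-$ as $\epsilon\to 0$; since $Z\in\mathcal X^3_\delta$ forces $|\mathcal J|=O(r^{\delta})$ for a \emph{fixed} $\delta<0$, choosing $\epsilon$ so that $\alpha_-(\epsilon)>\delta$ would rule them out. That is exactly the quantitative smallness the paper's Hardy argument packages, and which your ``$\mathcal E\in L^1$ hence Levinson'' shortcut misses. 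Your alternative weighted--elliptic suggestion would need $g$ globally close to $g_{sc}$, which the lemma does not assume; only the behaviour at infinity is controlled.
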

\begin{proof}
Let  $Z \in \mathcal{X}^{3}_{\delta}(M)$ is a conformal Killing vector field on $(M,g)$. A direct computation shows that we can express the third covariant derivative of $Z$ as follows.

\begin{equation} \label{eqforZ}
\nabla^3 Z = A\cdot \nabla Z + B \cdot Z
\end{equation}

where $A$ and $B$ are linear expressions in $Riem$ and $\nabla Riem$, where $Riem$ is the Riemann curvature tensor. We will move the proof to the appendix to avoid digressing from the main discussion (refer to \ref{appendix CK}). Since the dimension of $M$ is $3$, ${Riem}$ can be written in terms of only $\text{Ric}$ and $g$, and so $A$ and $B$ can be thought of as linear expressions in $Ric$ and $\nabla Ric$. 

\vv

\vv

An application of a Hardy-type inequality shows that there exists an $R_0>nm_0$ depending only on $g$ and a positive constant $C$ depending only on $\delta$ such that for any $R\geq R_0$ and any vector field $Z \in \mathcal{X}^3_{\delta}(M)$, 
\begin{equation} \label{hardy}
\int_{[R,\infty)\times S^2} r^{-2\delta-3} (|Z|^2+ r^2|\nabla Z|^2) dV_g \leq C \int_{[R,\infty)\times S^2} r^{-2(\delta-3)-3} |\nabla^3 Z|^2 dV_g 
\end{equation}
A proof of this inequality is provided in section \ref{hardy appendix} in the Appendix (see corollary \ref{main hardy prop}).


\vv

On the other hand, given $R>nm_0$, equation \eqref{eqforZ} implies 
\begin{align}
 \int_{[R,\infty)\times S^2} r^{-2(\delta-3)-3} |\nabla^3 Z|^2 dV &\leq C \int_{[R,\infty)\times S^2} r^{-2(\delta-3)-3} (|\nabla Ric|^2 |Z|^2 + |Ric|^2 |\nabla Z|^2 ) dV \\
 &\leq C \left( \sup_{r> R} r^4(|Ric|^2 + r^2|\nabla Ric|^2) \right) \int_{[R,\infty)\times S^2} r^{-2\delta-3} (|Z|^2 + r^2|\nabla Z|^2)  dV \\
 &\leq C\left(\norm{\gamma_{\infty} - \gamma_{\mathbb{S}^2}}_{\mathcal{H}^k(S^2)}^2 + R^{2\delta}\right) \int_{[R,\infty)\times S^2} r^{-2\delta-3} (|Z|^2 + r^2|\nabla Z|^2)  dV  \\
 &\leq C(\epsilon+ R^{2\delta}) \int_{[R,\infty)\times S^2} r^{-2\delta-3} (|Z|^2 + r^2|\nabla Z|^2)  dV
\end{align}
where the constant $C$ is allowed to change from line third to fourth line while staying universal, {\it i.e.} independent of $Z$, $R$, and $g$. 

\vv
We have then proven that 
\begin{equation}
\int_{[R,\infty)\times S^2} r^{-2\delta-3} (|Z|^2+ r^2|\nabla Z|^2) dV  \leq C(\epsilon+R^{2\delta}) \int_{[R,\infty)\times S^2} r^{-2\delta-3} (|Z|^2+r^2 |\nabla Z|^2) dV 
\end{equation}

Choosing $\epsilon$ small enough and $R$ large enough so that $C(\epsilon+ R^{2\delta})<1$ implies that $Z=0$ on $[R,\infty)\times S^2$. 

\vv

Since $Z$ satisfies the elliptic equation  $\Delta_{g,conf} Z = 0$ and vanishes on an open set, standard arguments then imply that $Z=0$ on $M$ (see for example \cite{boostproblem}).
\end{proof}

\begin{remark}
The nonexistence of nontrivial conformal Killing vector fields vanishing at infinity on asymptotically flat manifolds has already been established in \cite{boostproblem} and \cite{york}. The above lemma extends this nonexistence result to a broader class of metrics, including some that are not asymptotically flat. 
\end{remark}
\vv

After possibly shrinking the neighbourhood ${ \mathcal{ U}}$ of $(\gammasc, \frac{1}{2} \trksc)$ and using the continuity of $\mathcal{H}$, we can assume that for $(\gammao, \frac{1}{2} \trko) \in \mathcal{U}$, the metric $g$ of the modified solution $\mathcal{H}(\gammao, \frac{1}{2} \trko) = (g,u,X)$ satisfies $\norm{\gamma_{\infty} - \gamma_{\mathbb{S}^2}}_{\mathcal{H}^k(S^2)} < \epsilon$. Since $\bar X$ is conformal Killing on $(M,g)$ and lives in $\mathcal{X}^3_{\delta}(M)$, the above lemma then implies that $\bar X = 0$, and hence $X = 0$. We then finally conclude, by remark \ref{X=0 iff}, that $(g,u)$ is a solution to the conformal static vacuum Einstein equations with Bartnik data $(\gammao, \frac{1}{2} \trko)$. This concludes the proof of the main theorem.


\vvv

\subsection{Proof that $D\Phi_{sc}$ is an Isomorphism \label{proofDPhi} }
In this section, we will prove proposition \ref{Dphi-iso}. 

\vv


We first remind the reader of the values of some key parameters for the Schwarzschild solution $(\mathfrak{g_{sc}}, f_{sc})$ and the conformal Schwarzschild solution 
$g_{sc}= f^2_{sc}\mathfrak{g_{sc}}$, $u_{sc}= \ln(f_{sc})$

\begin{multicols}{2}
\begin{itemize}
\item $\mathfrak{g_{sc}} = \left( 1-\frac{2m_0}{r}\right)^{-1} dr^2 + r^2 \gamma_{\mathbb{S}^2}$
\item $trK_{\mathfrak{g_{sc}}} = \frac{2 \sqrt{1-\frac{2m_0}{r}}}{r} $
\item $\gamma_{\mathfrak{g_{sc}}} = (nm_0)^2 \gamma_{\mathbb{S}^2}$ 
\item $f_{sc} = \sqrt{1-\frac{2m_0}{r}}$
\end{itemize}
\end{multicols}

\vv

\begin{multicols}{2}
\begin{itemize}
\item $g_{sc} = dr^2 + \left( 1-\frac{2m_0}{r} \right) r^2 \gamma_{\mathbb{S}^2} $
\item $\gamma_{sc} = n(n-2){m_0}^2 \gamma_{\mathbb{S}^2}$ 
\item $u_{sc} = \ln{\sqrt{1-\frac{2m_0}{r} }}$
\item $  trK_{sc} = \frac{2(r-m_0)}{r(r-2m_0)}$
\item $\hat K_{sc} = 0$
\item ${R_{\partial M}}_{sc} = \frac{2}{n(n-2){m_0}^2}$
\end{itemize}
\end{multicols}

\vv

Let $\tilde g \in T_{g_{sc}} \mathcal{M}^k_{\delta}$, $\tilde u \in \mathcal{A}^{(2,k+1)}_{\delta}(M)$, and $\tilde X \in \widehat{\mathcal{X}}^{2}_{\delta}(M)$. For small $t$, let $g(t)$, $u(t)$, and $X(t)$ be smooth 1-parameter families satisfying

\begin{multicols}{2}
\begin{itemize}
\item $g(0) = g_{sc}$
\item $u(0) = u_{sc}$
\item $X(0) = 0$
\item $g'(0) = \tilde g$
\item $u'(0)= \tilde u$
\item $X'(0) = \tilde X$
\end{itemize}
\end{multicols}

Define the following 

\begin{multicols}{1}
\begin{itemize}
\item $\widetilde{trK} := \ddt trK(t)$
\item $\widetilde{\hat K} := \ddt \hat K(t)$
\item $\tilde \gamma := \ddt \tilde g(t)(nm_0)$
\item $\tilde \omega := \ddt \omega(g(t), X_{\infty}(t) )$
\end{itemize}
\end{multicols}

where $X_{\infty}(t)$ is the projection of $X(t)$ into the space $\mathcal{X}_{\infty}$. By definition of $\omega$, we have that $\tilde \omega$ is a conformal Killing field on $(S^2, g_{sc}(nm_0))$. 

\vv

We compute $D\Phi_{sc}$ to be 

\begin{multline*}
D \Phi_{sc}: T_{g_{sc}}\mathcal{M}^k_{\delta} \times \mathcal{A}^{(2,k+1)}_{\delta}(M) \times \bigg( \widehat{\mathcal{X}}^{2}_{\delta}(M) \oplus \mathcal{X}_{\infty}(M)\bigg)\\
  \to  \mathcal{A}^{(0,k-1)}_{\delta-2}(M) \times  L^2_{\delta-2} \bigg( [nm_0,\infty); H^k(S^2) \bigg) \times  L^2_{\delta-2} \bigg( [nm_0,\infty) ; \mathcal{H}^k (S^2) \bigg)\\
\times \mathcal{X}^{0}_{\delta-2}(M) \times H^{k-1}(\partial M) \times \Omega^{k-1}(\partial M) \times \mathcal{H}^{k}(\partial M) \times  H^{k}(\partial M) 
 \end{multline*}
 
 \begin{align} 
 D \Phi_{sc} (\tilde g, \tilde u, \tilde X) &=  \left. \frac{d}{dt} \right|_{t=0} \Phi(\gammasc, \frac{1}{2} \trksc, g(t), u(t), X(t) ) \nonumber  \\ 
 &=  \begin{pmatrix} \Delta_{g_{sc}} \tilde u +  (\partial_r u_{sc}) ( \widetilde{trK}) \lv \\ 
\partial_r \widetilde{trK} + trK_{sc} \widetilde{trK}  + 4(\partial_r u_{sc})( \partial_r \tilde u) \lv \\
\mathcal{L}_{\dd{r}} {\widetilde{\hat K}}   \lv \\
\Delta_{g_{sc}, conf} \tilde X  \lv \\ 
 - 4(\partial_r u_{sc}) (\partial_r \tilde u) + trK_{sc} \left. \widetilde{trK} \right|_{\partial M}  +  \frac{4}{n(n-2)m_0^2} \tilde u +2 \slashed \Delta_{\gamma_{sc}} \tilde u \lv \\
2(\partial_r u_{sc}) \slashed d\tilde u - \cancel{div} (\widetilde{\hat K}) + \tilde \omega  \lv \\
 \frac{n}{n-2} \tilde \gamma - 2 n^2m_0^2 \, \tilde u \,  g_{S^2} \lv \\
\left. \widetilde{trK} \right|_{\partial M} + \frac{2}{nm_0} \tilde u - 2\partial_r \tilde u \end{pmatrix}
 \end{align}
 
Note that for the third line in $D\Phi_{sc}$, we used the fact that 

\begin{equation}
\nabla_r \widetilde{\hat K} + trK \widetilde{\hat K} = \mathcal{L}_{\dd{r}} \widetilde{\hat K}
\end{equation}

since $\hat K_{sc} = 0$. \\

\vv

Let $(A,B,C,D,E,F,G,H)$ be an arbitrary element in the codomain of $D \Phi_{sc}$. We wish to show that there exists a unique $(\tilde g, \tilde u, \tilde X)$ in the domain satisfying 

\begin{equation} \label{1st}
\Delta_{g_{sc}} \tilde u +  (\partial_r u_{sc}) ( \widetilde{trK}) = A, \quad \text{on $M$}
\end{equation}
 
 \begin{equation} \label{2nd}
\partial_r \widetilde{trK} + trK_{sc} \widetilde{trK}  + 4(\partial_r u_{sc})( \partial_r \tilde u) = B, \quad \text{on $M$}
\end{equation}

\begin{equation} \label{3rd}
\mathcal{L}_{\dd{r}} {\widetilde{\hat K}} = C, \quad \text{on $M$}
\end{equation}

\begin{equation} \label{4th}
\Delta_{g_{sc}, conf} \tilde X = D, \quad \text{on $M$}
\end{equation}

\begin{equation} \label{5th}
 - 4(\partial_r u_{sc}) (\partial_r \tilde u) + trK_{sc} \left. \widetilde{trK} \right|_{\partial M}+\frac{4}{n(n-2)m_0^2} \tilde u + 2\slashed \Delta_{\gamma_{sc}} \tilde u = E, \quad \text{on $\partial M$}
\end{equation}

\begin{equation}\label{6th}
2(\partial_r u_{sc}) \slashed d\tilde u - \cancel{div} (\widetilde{\hat K}) + \tilde \omega = F, \quad \text{on $\partial M$}
\end{equation}

\begin{equation} \label{7th}
 \frac{n}{n-2} \tilde \gamma - 2 n^2m_0^2 \, \tilde u \,  g_{S^2}=G, \quad \text{on $\partial M$}
\end{equation}

\begin{equation} \label{8th}
\left. \widetilde{trK} \right|_{\partial M} + \frac{2}{nm_0} \tilde u - 2\partial_r \tilde u =H, \quad \text{on $\partial M$}
\end{equation}

The above equations can be decoupled to give a non-local elliptic system on $\tilde u$.

\begin{lem}\label{decouple}   Let $\tilde u \in \mathcal{A}^{(2,k+1)}_{\delta}(M)$ and $\tilde g \in T_{g_{sc}} \mathcal{M}^k_{\delta}$ satisfy equations \eqref{2nd} and \eqref{8th}. 

Then $\tilde u$ and $\tilde g$ satisfy equations \eqref{1st} and \eqref{5th} if and only if $\tilde u$ satisfies

 \begin{equation}
 \label{eq1}
\Delta_{g_{sc}} \tilde u-\frac{4m_0^2}{[r(r-2m_0)]^2} \tilde u = -\frac{n(n-2)m_0}{2}\frac{4m_0^2}{[r(r-2m_0)]^2} \left(\frac{4-n}{n(n-2)m_0} \tilde u|_{\partial M}+ \partial_r\tilde u|_{\partial M} \right)+\psi, \quad \text{on $M$}
\end{equation}

 \begin{align} \label{eq2}
\frac{4}{nm_0} \partial_r \tilde u + 2\slashed \Delta \tilde u+ \frac{4}{n^2(n-2)m_0^2} \tilde u =\Gamma , \quad \text{on $\partial M$}
\end{align}

where $\psi$ and $\Gamma$ are defined as follows

\begin{equation} \label{psi}
\psi := A -\frac{m_0}{r^2(r-2m_0)^2} \int_{nm_0}^r s(s-2m_0) B \, ds - \frac{m_0^3n(n-2)}{r^2(r-2m_0)^2} H 
\end{equation}
\begin{equation} \label{Gamma}
\Gamma := E - \frac{2(n-1)}{n(n-2)m_0} H
\end{equation}
\end{lem}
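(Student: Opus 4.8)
The plan is to eliminate the linearized trace $\widetilde{trK}$ from the coupled relations \eqref{1st}, \eqref{2nd}, \eqref{5th}, \eqref{8th} and thereby reduce everything to two equations for $\tilde u$ alone. The starting point is that \eqref{2nd} is a first order linear ODE in $r$ for $\widetilde{trK}$, and one checks directly that its integrating factor is
\[
L_{sc}(r) := \exp\!\left(\int_{nm_0}^{r} trK_{sc}(s)\,ds\right) = \frac{r(r-2m_0)}{n(n-2)m_0^{2}},
\]
the closed form following from $\tfrac{d}{dr}\ln\!\big(r(r-2m_0)\big) = \tfrac{2(r-m_0)}{r(r-2m_0)} = trK_{sc}(r)$ together with $L_{sc}(nm_0)=1$. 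Multiplying \eqref{2nd} by $L_{sc}$ and integrating from $nm_0$ to $r$ — which is legitimate because $\widetilde{trK}\in H^{1}_{\delta-1}([nm_0,\infty);H^{k}(S^2))$ is continuous in $r$ by proposition \ref{prop-spaces}, and $L_{sc}B$ is integrable on the bounded interval $[nm_0,r]$ since $B\in L^{2}_{\delta-2}$ — gives
\[
L_{sc}(r)\,\widetilde{trK}(r) \;=\; \widetilde{trK}\big|_{\partial M} \;+\; \int_{nm_0}^{r} L_{sc}(s)B(s)\,ds \;-\; 4\int_{nm_0}^{r} L_{sc}(s)\,(\partial_r u_{sc})(s)\,(\partial_r\tilde u)(s)\,ds .
\]
The decisive simplification is that $4\,L_{sc}(\partial_r u_{sc}) \equiv \tfrac{4}{n(n-2)m_0}$ is constant, because $\partial_r u_{sc}=\tfrac{m_0}{r(r-2m_0)}$; hence the last integral telescopes to $\tfrac{4}{n(n-2)m_0}\big(\tilde u(r)-\tilde u|_{\partial M}\big)$. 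Using \eqref{8th} to substitute $\widetilde{trK}|_{\partial M} = H - \tfrac{2}{nm_0}\tilde u|_{\partial M} + 2\partial_r\tilde u|_{\partial M}$, this expresses $\widetilde{trK}(r)$ purely in terms of $\tilde u$ and the data $B,H$.

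Next I would insert this expression into the bulk equation \eqref{1st}. Since $(\partial_r u_{sc})\widetilde{trK} = \tfrac{n(n-2)m_0^{3}}{[r(r-2m_0)]^{2}}\,[\,\cdots\,]$, collecting terms shows: the $\tilde u(r)$ summand produces exactly $-\tfrac{4m_0^{2}}{[r(r-2m_0)]^{2}}\tilde u$, which is the potential term on the left of \eqref{eq1}; the $\int_{nm_0}^{r}L_{sc}B$ summand produces $-\tfrac{m_0}{[r(r-2m_0)]^{2}}\int_{nm_0}^{r}s(s-2m_0)B\,ds$ and the $H$ summand produces $-\tfrac{n(n-2)m_0^{3}}{[r(r-2m_0)]^{2}}H$, which together with $A$ make up $\psi$ as in \eqref{psi}; and the boundary-value summands $\tilde u|_{\partial M}$, $\partial_r\tilde u|_{\partial M}$ assemble — after using $-\tfrac{2}{nm_0}+\tfrac{4}{n(n-2)m_0}=\tfrac{2(4-n)}{n(n-2)m_0}$ — into precisely the first term on the right-hand side of \eqref{eq1}. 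For \eqref{eq2} I would evaluate \eqref{5th} at $\partial M$, where $r=nm_0$, $\partial_r u_{sc}(nm_0)=\tfrac{1}{n(n-2)m_0}$ and $trK_{sc}(nm_0)=\tfrac{2(n-1)}{n(n-2)m_0}$, leaving the Laplacian term $2\slashed\Delta_{\gamma_{sc}}\tilde u$ untouched, and substitute $\widetilde{trK}|_{\partial M}$ from \eqref{8th}: the $\partial_r\tilde u|_{\partial M}$ coefficients combine to $\tfrac{4}{nm_0}$ (via $-\tfrac{4}{n(n-2)m_0}+\tfrac{4(n-1)}{n(n-2)m_0}=\tfrac{4}{nm_0}$), the $\tilde u|_{\partial M}$ coefficients to $\tfrac{4}{n^{2}(n-2)m_0^{2}}$, and the residual multiple $\tfrac{2(n-1)}{n(n-2)m_0}H$ of the data is moved to the right-hand side to form $\Gamma$ as in \eqref{Gamma}, yielding \eqref{eq2}.

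For the converse, given \eqref{2nd} and \eqref{8th} the integral representation of $\widetilde{trK}$ above is forced, and every step just described is a chain of reversible equalities; hence \eqref{1st} $\Leftrightarrow$ \eqref{eq1} and \eqref{5th} $\Leftrightarrow$ \eqref{eq2}, which is the assertion. I do not expect a genuine obstacle here: the content is essentially bookkeeping with the Schwarzschild coefficients. The two points deserving care are (i) recognizing the constancy $4L_{sc}(\partial_r u_{sc})=\tfrac{4}{n(n-2)m_0}$ — this is exactly what makes the $\partial_r\tilde u$ integral collapse to a local $\tilde u$-term rather than leaving a nonlocal convolution in \eqref{eq1} — and (ii) justifying the radial integrations within the Banach spaces at hand, which is covered by the continuity-in-$r$ and trace statements of proposition \ref{prop-spaces}.
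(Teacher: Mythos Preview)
Your proposal is correct and follows essentially the same approach as the paper: both solve the linear ODE \eqref{2nd} for $\widetilde{trK}$ via the integrating factor $L_{sc}(r)=\tfrac{r(r-2m_0)}{n(n-2)m_0^{2}}$, exploit the constancy of $4L_{sc}\,\partial_r u_{sc}$ to collapse the $\partial_r\tilde u$ integral, substitute the result into \eqref{1st}, and separately substitute \eqref{8th} into \eqref{5th}. Your write-up is in fact slightly more explicit than the paper's in flagging the key cancellation and in noting the function-space justification for the radial integration.
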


\begin{remark}
In light of the spaces that $A$, $B$, $E$ and $H$ live in, it follows that $\psi \in \mathcal{A}^{(0,k-1)}_{\delta-2}(M)$ and $\Gamma \in H^{k-1}(S^2)$. Note that the boundary value problem that $\tilde u$ satisfies in equations \eqref{eq1} and \eqref{eq2} does not depend on $\tilde g$ or $\tilde X$; hence, we have indeed decoupled the system. 
\end{remark}
\begin{proof}
We directly deduce that equations \eqref{eq2} and \eqref{5th} are equivalent by using equation \eqref{8th}. 

\vv

We rewrite equation \eqref{2nd} as follows: 
\begin{equation} \label{DG2a}
\partial_r \left(  \exp\left(\int_{nm_0}^r trK_{sc}(s) ds\right) \widetilde{trK}\right) = -\exp\left(\int_{nm_0}^r trK_{sc}(s) ds\right)\Big(4(\partial_r u_{sc}) (\partial_r \tilde u) - B\Big).
\end{equation}

A direct computation gives:
\begin{equation} 
\exp\left(\int_{nm_0}^r trK_{sc}(s) ds\right) = \frac{1}{n(n-2)m_0^2}r(r-2m_0).
\end{equation}

We integrate equation \eqref{DG2a} to get an expression for $\widetilde{trK}$ in terms of $\tilde u$: for $r\in [nm_0,\infty)$ and $p \in S^2$, we have 
 \begin{align}  
\widetilde{trK} (r,p) &= \frac{n(n-2)m_0^2}{r(r-2m_0)}\left. \widetilde{trK} \right|_{\partial M}(p) - 4\frac{n(n-2)m_0^2}{r(r-2m_0)} \int_{nm_0}^r \frac{1}{n(n-2)m_0^2}s(s-2m_0) (\partial_r u_{sc})(s) (\partial_r \tilde u)(s,p) ds \nonumber\\ 
&\qquad  + \frac{n(n-2)m_0^2}{r(r-2m_0)} \int_{nm_0}^r \frac{1}{n(n-2)m_0^2}s(s-2m_0) B(s,p)  ds \\
&= \frac{n(n-2)m_0^2}{r(r-2m_0)}\left. \widetilde{trK} \right|_{\partial M}(p) - 4\frac{m_0^2}{r(r-2m_0)} \int_{nm_0}^r \frac{1}{m_0^2}s(s-2m_0) \left(\frac{m_0}{s(s-2m_0)}\right) (\partial_r \tilde u)(s,p) ds \nonumber\\
&\qquad  + \frac{1}{r(r-2m_0)} \int_{nm_0}^r s(s-2m_0) B(s,p)  ds \\
&= \frac{n(n-2)m_0^2}{r(r-2m_0)}\left. \widetilde{trK} \right|_{\partial M}(p) - \frac{4m_0}{r(r-2m_0)} \int_{nm_0}^r \partial_r \tilde u(s,p) \, ds + \frac{1}{r(r-2m_0)} \int_{nm_0}^r s(s-2m_0) B(s,p)  ds\\
&= \frac{n(n-2)m_0^2}{r(r-2m_0)}\left. \widetilde{trK} \right|_{\partial M}(p) - \frac{4m_0}{r(r-2m_0)} \Big(\tilde u(r,p) - \tilde u|_{\partial M}(p)\Big) + \frac{1}{r(r-2m_0)} \int_{nm_0}^r s(s-2m_0) B(s,p)  ds
\end{align}

We plug this into equation \eqref{1st} to derive 

\begin{align}
A &= \Delta_{sc} \tilde u + \frac{m_0}{r(r-2m_0)} \left( \frac{n(n-2)m_0^2}{r(r-2m_0)}\left. \widetilde{trK} \right|_{\partial M} - \frac{4m_0}{r(r-2m_0)} \Big(\tilde u - \tilde u|_{\partial M}\Big)\right) \nonumber\\
&\qquad + \frac{m_0}{r^2(r-2m_0)^2} \int_{nm_0}^r s(s-2m_0) B(s,p)  ds \\
&= \Delta_{sc} \tilde u - \frac{4m_0^2}{r^2(r-2m_0)^2} \tilde u + \frac{4m_0^2}{r^2(r-2m_0)^2} \left( \tilde u|_{\partial M} + \frac{n(n-2)m_0}{4} \left.\widetilde{trK}\right|_{\partial M} \right)\nonumber \\
&\qquad  + \frac{m_0}{r^2(r-2m_0)^2} \int_{nm_0}^r s(s-2m_0) B(s,p)  ds
\end{align}

Using equation \eqref{8th}, it then follows that $\tilde u$ satisfies:

\begin{align}
\Delta_{g_{sc}} \tilde u-\frac{4m_0^2}{r^2(r-2m_0)^2} \tilde u &= A-\frac{n(n-2)m_0}{2}\frac{4m_0^2}{r^2(r-2m_0)^2} \left( \frac{4-n}{n(n-2)m_0} \tilde u|_{\partial M}+ \partial_r\tilde u|_{\partial M} \right) \nonumber\\
&\qquad - \frac{n(n-2)m_0^3}{r^2(r-2m_0)^2} H - \frac{m_0}{r^2(r-2m_0)^2} \int_{nm_0}^r s(s-2m_0) B(s,p)  ds\\
&= -\frac{n(n-2)m_0}{2}\frac{4m_0^2}{r^2(r-2m_0)^2} \left( \frac{4-n}{n(n-2)m_0} \tilde u|_{\partial M}+ \partial_r\tilde u|_{\partial M} \right) + \psi
\end{align}

This proves that equation \eqref{eq1} is equivalent to equation \eqref{1st}. 

\end{proof}



\vv

The rest of the proof will proceed in the following steps. 

\begin{enumerate}[label=\textbf{Step \arabic*: }]
\item We will show that for every $\psi \in \mathcal{A}^{(0,k-1)}_{\delta-2}(M)$ and $\Gamma \in H^{k-1}(S^2)$, there exists a unique solution $\tilde u \in {\mathcal{A}^{(2,k+1)}_{\delta}}$ solving equations \eqref{eq1} and \eqref{eq2}. 


\item We will show if $\tilde u \in {\mathcal{A}^{(2,k+1)}_{\delta}}$ satisfies \eqref{eq1} and \eqref{eq2} with $\psi$ and $\Gamma$ given by equations \eqref{psi} and \eqref{Gamma}, then there exists a unique $\tilde g \in T_{g_{sc}}\mathcal{M}^k_{\delta}(M)$ and a unique conformal Killing field $\tilde \omega$ satisfying equations \eqref{1st} to \eqref{3rd} and \eqref{5th} to \eqref{8th}. 

\item We will show that there exists a unique vector field $\tilde Y \in \widehat{\mathcal{X}}^{2}_{\delta}(M)$ satisfying $\Delta_{g_{sc}, conf} \tilde Y =D$. 
\end{enumerate}

 The above 3 steps will then imply that there exists a unique $(\tilde g, \tilde u, \tilde X)$ in the domain of $D\Phi_{sc}$ solving equations \eqref{1st} to \eqref{8th}. In particular, $\tilde u$ and $\tilde g$ are achieved from steps 1 and 2 respectively and $\tilde X:= \tilde Y + \tilde \omega$, where $\tilde Y$ and $\tilde \omega$ are achieved from steps 2 and 3 respectively. 

\vvv

\vvv

{ \Large \bf Step 1: Solving for $\tilde u$} \\

To study the boundary value problem in \eqref{eq1} and \eqref{eq2}, we will investigate the properties of the corresponding non-local elliptic operator $\mathcal{P}_{sc}$, which maps $\mathcal{A}^{(2,k+1)}_{\delta}(M)$ into $\mathcal{A}^{(0,k-1)}_{\delta-2}(M)\times H^{k-1}(\partial M)$ and is defined by 

\begin{equation} \label{Psc}
\mathcal{P}_{sc}( \tilde u) := \begin{pmatrix} \Delta_{g_{sc}} \tilde u-\frac{4m_0^2}{[r(r-2m_0)]^2} \tilde u  +\frac{n(n-2)m_0}{2}\frac{4m_0^2}{[r(r-2m_0)]^2} \left(\frac{4-n}{n(n-2)m_0} \tilde u|_{\partial M}+ \partial_r\tilde u|_{\partial M} \right) \lv  \\
\frac{2}{nm_0} \partial_r \tilde u + \slashed \Delta \tilde u+ \frac{2}{n^2(n-2)m_0^2} \tilde u \end{pmatrix} 
\end{equation}

In fact, this operator will turn out to be Fredholm of index 0 as shown in the following proposition. 

\begin{remark}
In \cite{an-huang} and \cite{an-huang2}, the authors study the static Einstein vacuum equations in a gauge different from the one used in this paper. Specifically, they study an operator analogous to the operator $\Phi$ considered here. They achieve that the linearization of their operator is an isomorphism, so as to invoke the implicit function theorem, by first establishing that it is Fredholm of index $0$ and then showing that its kernel is trivial. Our approach here is similar except that our gauge allows us to decouple the equations; this decoupling reduces the task of proving that $D\Phi_{sc}$ is an isomorphism to proving that a much simpler operator, $\mathcal{P}_{sc}$ acting on the linearization of the lapse function $\tilde u$, is an isomorphism. Specifically, we will establish that $\mathcal{P}_{sc}$ is Fredholm of index $0$ and has a trivial kernel. The remaining parameters, $\tilde g$ and $\tilde X$, are governed by straightforward ODEs with $\tilde u$ appearing in the forcing term, and the fact that $D\Phi_{sc}$ is an isomorphism will follow readily (see \textbf{Step 2}). 
\end{remark}

\begin{prop}
Fix $\delta \in (-1,-\frac{1}{2}]$ and $k\in \Z_{\geq0}$. Let $T: \mathcal{A}^{(2,k+1)}_{\delta}(M) \to H^{k-1}(\partial M)$ and $S: \mathcal{A}^{(2,k+1)}_{\delta}(M) \to H^{k-1}(\partial M)$  be operators defined by 
\begin{equation}
T(\tilde u) := \partial_r \tilde u  + \mu \tilde u
\end{equation}
\begin{equation} 
S(\tilde u) := \slashed \Delta_{\gamma_{sc}} \tilde u + \beta_1 \partial_r \tilde u + \beta_2 \tilde u
\end{equation}
where $ \mu, \beta_1, \beta_2 \in H^{k}(\partial M)$. \\

Let $\mathcal{P}$ be the nonlocal elliptic differential operator defined by 
 $$\mathcal{P}: \mathcal{A}^{(2,k+1)}_{\delta}(M)\to \mathcal{A}^{(0,k-1)}_{\delta-2}(M) \times H^{k-1}(\partial M) $$
\begin{equation}
\mathcal{P}(\tilde u) := \begin{pmatrix} \Delta_{g_{sc}} \tilde u - V_1\tilde u - V_2 \overline{T(\tilde u)} \lv \\  S(\tilde u) \end{pmatrix}
\end{equation}

where $V_1, V_2 \in \HtoH{1}{-3}{k}$ and $\overline{T(\tilde u)}$ is the function on $M$ defined by $(r,p) \mapsto T(\tilde u)(p)$ for $(r,p) \in M$. Then $\mathcal{P}$ is Fredholm of index $0$. 
\end{prop}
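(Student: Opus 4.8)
The plan is to show that $\mathcal P$ differs from a model operator whose Fredholm properties are already known (the Laplacian on the conformal Schwarzschild background) by a compact perturbation, and then to invoke the stability of the Fredholm index under compact perturbations. First I would write $\mathcal P = \mathcal P_0 + \mathcal K$, where $\mathcal P_0(\tilde u) := (\Delta_{g_{sc}}\tilde u,\ \slashed\Delta_{\gamma_{sc}}\tilde u)$ and $\mathcal K$ collects all remaining terms: $\mathcal K(\tilde u) = (-V_1\tilde u - V_2\overline{T(\tilde u)},\ \beta_1\partial_r\tilde u|_{\partial M} + \beta_2\tilde u|_{\partial M})$. The claim that $\mathcal P_0$ is Fredholm of index $0$ as a map $\mathcal{A}^{(2,k+1)}_{\delta}(M) \to \mathcal{A}^{(0,k-1)}_{\delta-2}(M)\times H^{k-1}(\partial M)$ follows from Theorem \ref{elliptic thm}: there it is shown that $\mathcal Q: \tilde u \mapsto (\Delta_{g_{sc}}\tilde u, \tilde u|_{\partial M})$ is an isomorphism onto $\mathcal{A}^{(0,k-1)}_{\delta-2}(M)\times H^{k+1/2}(\partial M)$ (in the $H$-scale; the $C$-scale version is analogous, and on $\mathcal{A}^{(2,k+1)}_{\delta}$ both hold simultaneously). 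Precomposing an isomorphism with the bounded operator $\tilde u \mapsto \slashed\Delta_{\gamma_{sc}}\tilde u|_{\partial M}$ on the boundary slot, or more precisely noting that $\mathcal P_0$ is the composition of $\mathcal Q$ with the bounded linear map $(F,h)\mapsto (F,\slashed\Delta_{\gamma_{sc}} h)$ on $\mathcal{A}^{(0,k-1)}_{\delta-2}\times H^{k+1/2}(\partial M)$, one sees $\mathcal P_0$ is surjective onto $\mathcal{A}^{(0,k-1)}_{\delta-2}\times H^{k-1}(\partial M)$ with kernel equal to $\{\tilde u : \Delta_{g_{sc}}\tilde u = 0,\ \slashed\Delta_{\gamma_{sc}}\tilde u|_{\partial M} = 0\}$; since $\tilde u|_{\partial M}$ then lies in the $6$-dimensional kernel of $\slashed\Delta_{\gamma_{sc}}$ enlarged by constants — actually a finite-dimensional space — and $\mathcal Q$ is an isomorphism, this kernel is finite-dimensional, and a short index bookkeeping (kernel finite-dimensional, range closed of finite codimension, index computable by the same splitting) gives index $0$. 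In fact the cleanest route is: $\mathcal P_0$ is surjective because $\mathcal Q$ is, and $\ker\mathcal P_0 \cong \ker(\slashed\Delta_{\gamma_{sc}}: H^{k+1/2}(\partial M)\to H^{k-1}(\partial M)) = $ constants, so $\mathcal P_0$ is Fredholm of index $1 - (k{-}1\text{-codim of }\slashed\Delta\text{ range})$; since $\slashed\Delta_{\gamma_{sc}}$ on the sphere is self-adjoint with one-dimensional kernel and cokernel, $\mathcal P_0$ has index $0$.

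Next I would verify that $\mathcal K$ is a compact operator from $\mathcal{A}^{(2,k+1)}_{\delta}(M)$ into $\mathcal{A}^{(0,k-1)}_{\delta-2}(M)\times H^{k-1}(\partial M)$. For the interior term, $V_1, V_2 \in \HtoH{1}{-3}{k}$ means these coefficients decay like $r^{-3}$ with one extra radial derivative, so multiplication by $V_i$ maps $\mathcal{A}^{(2,k+1)}_{\delta}$ into $\mathcal{A}^{(0,k-1)}_{\delta-3}$ (a gain of one power of decay beyond the target weight $\delta-2$), and the operator $\tilde u\mapsto \overline{T(\tilde u)}$ followed by multiplication by $V_2$ likewise lands in a space with strictly better weight and regularity than the target; the compact embedding $\HtoH{t_2}{\delta_2}{k_2}\hookrightarrow\hookrightarrow \HtoH{t_1}{\delta_1}{k_1}$ for $\delta_2 < \delta_1$, $k_2 > k_1$ (proposition \ref{prop-spaces}(c)) then yields compactness of this piece. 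For the boundary term, the trace maps $\tilde u \mapsto \tilde u|_{\partial M} \in H^{k+1/2}(\partial M)$ and $\tilde u\mapsto \partial_r\tilde u|_{\partial M}\in H^{k-1/2}(\partial M)$ are bounded by proposition \ref{prop-spaces}(d); composing with multiplication by $\beta_i\in H^k(\partial M)$ lands in $H^{k-1/2}(\partial M)$, which embeds compactly into $H^{k-1}(\partial M)$ since $k - 1/2 > k - 1$ and $S^2$ is compact (Rellich). Hence $\mathcal K$ is compact.

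Finally, since the Fredholm index is invariant under compact perturbations, $\mathcal P = \mathcal P_0 + \mathcal K$ is Fredholm of the same index as $\mathcal P_0$, namely $0$. I expect the main obstacle to be the bookkeeping for the model operator $\mathcal P_0$: one must be careful that replacing the Dirichlet trace $\tilde u|_{\partial M}$ by $\slashed\Delta_{\gamma_{sc}}\tilde u|_{\partial M}$ (which drops from $H^{k+1/2}$ to $H^{k-1}$ and introduces a $1$-dimensional cokernel coming from the constants, exactly matching the $1$-dimensional kernel of $\slashed\Delta$ on $S^2$ relative to the enlarged target) indeed preserves Fredholmness with index $0$ and does not secretly lose closedness of the range — this is where the explicit spherical-harmonic description underlying Theorem \ref{elliptic thm} is most reassuring. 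The compactness verification, while requiring attention to which weighted space each term lands in, is routine given proposition \ref{prop-spaces}(c),(d).
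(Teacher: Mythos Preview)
Your approach is exactly the paper's: decompose $\mathcal P = \mathcal P_0 + \mathcal K$ with $\mathcal P_0(\tilde u) = (\Delta_{g_{sc}}\tilde u,\ \slashed\Delta_{\gamma_{sc}}\tilde u|_{\partial M})$, show $\mathcal K$ is compact via Proposition~\ref{prop-spaces}(c),(d), and factor $\mathcal P_0 = (\mathrm{Id},\slashed\Delta_{\gamma_{sc}})\circ \mathcal Q$ using Theorem~\ref{elliptic thm}. The compactness argument and the index conclusion are correct.

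There is one concrete slip in the model-operator step. On the domain $\mathcal A^{(2,k+1)}_\delta(M)$ (the \emph{intersection} space), the correct boundary target for $\mathcal Q$ is $H^{k+1}(\partial M)$, not $H^{k+1/2}(\partial M)$: this is the content of the remark following Theorem~\ref{elliptic thm}, and it comes from the $C$-scale component of $\mathcal A^{(2,k+1)}_\delta$, where $\tilde u|_{\partial M}\in H^{k+1}$. With your choice $H^{k+1/2}$, the map $(F,h)\mapsto (F,\slashed\Delta_{\gamma_{sc}} h)$ does \emph{not} land in $H^{k-1}(\partial M)$, since $\slashed\Delta_{\gamma_{sc}}:H^{k+1/2}\to H^{k-3/2}$ only, and $H^{k-3/2}\supsetneq H^{k-1}$; the composition is therefore not well defined into the stated target. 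Once you use $H^{k+1}$, the boundary factor $\slashed\Delta_{\gamma_{sc}}:H^{k+1}(\partial M)\to H^{k-1}(\partial M)$ is the standard elliptic operator on the compact sphere, Fredholm of index $0$ (one-dimensional kernel and cokernel, both the constants), and the factorization $\mathcal P_0 = (\mathrm{Id},\slashed\Delta_{\gamma_{sc}})\circ \mathcal Q$ immediately gives $\mathrm{ind}\,\mathcal P_0 = 0$. This also cleans up your index bookkeeping: $\mathcal P_0$ is \emph{not} surjective (the cokernel of $\slashed\Delta_{\gamma_{sc}}$ survives), but the index is the sum of indices, hence $0+0=0$.
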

\begin{proof}

\vv

Decompose the operator $\mathcal{P} = \mathcal{P}_1+ \mathcal{P}_2$, where the operators $\mathcal{P}_1, \mathcal{P}_2:  \mathcal{A}^{(2,k+1)}_{\delta}(M)\to \mathcal{A}^{(0,k-1)}_{\delta-2}(M)\times H^{k-1}(\partial M)$  are defined by
\[ \mathcal{P}_1 (\tilde  u) := \begin{pmatrix} \Delta_{g_{sc}} \tilde u \lv \\ \slashed \Delta_{\gamma_{sc}} \tilde u \end{pmatrix}, \qquad  \mathcal{P}_2 (\tilde  u) := \begin{pmatrix} -V_1 \tilde u - V_2 \overline{T(\tilde u)} \lv \\ \beta_1 \partial_r \tilde u + \beta_2 \tilde u \end{pmatrix} \]

In light of proposition \eqref{prop-spaces}, we observe that $\mathcal{P}_2$ is a compact operator. Indeed, for any $\tilde u \in {\mathcal{A}^{(2,k+1)}_{\delta}}$, we have that $-V_1 \tilde u - V_2 \overline{T(\tilde u)}$ lives in $\HtoH{1}{-3}{k}$ which compactly embeds in $\mathcal{A}^{(0,k-1)}_{\delta-2}(M)$, and $\beta_1 \partial_r \tilde u + \beta_2 \tilde u$ lives in $H^{k}(\partial M)$ which compactly embeds in $H^{k-1}(\partial M)$. To show that $\mathcal{P}$ is Fredholm of index 0, it then suffices to show that $\mathcal{P}_1$ is Fredholm of index 0 (see \cite{fredholm})

\vvv



\vv

By theorem \ref{elliptic thm}, the operator $\mathcal{Q}$ defined by: 

\[\mathcal{Q} (\tilde  u) := \begin{pmatrix} \Delta_{g_{sc}} \tilde u \lv \\ \tilde u \end{pmatrix} \]

is an isomorphism from ${\mathcal{A}^{(2,k+1)}_{\delta}}$ to $\mathcal{A}^{(0,k-1)}_{\delta-2}(M)\times H^{k+1}(\partial M)$. 

\vv

We also recall the following standard result on the laplacian on compact manifolds: The operator $\slashed \Delta_{\gamma_{sc}}: H^{k+1}(\partial M) \to H^{k-1}(\partial M)$ is Fredholm of index $0$. In fact, the kernel is the one-dimensional space of constant functions on $\partial M$ and the cokernel is the same since $\slashed \Delta_{\gamma_{sc}}$ is self-adjoint.

\vv

We observe that $\mathcal{P}_1 = (\mathcal{ID}, \slashed \Delta_{\gamma_{sc}}) \circ \mathcal{Q}$, where 
$$(\mathcal{ID}, \slashed \Delta_{\gamma_{sc}}): \mathcal{A}^{(0,k-1)}_{\delta-2}(M) \times H^{k+1}(\partial M) \to \mathcal{A}^{(0,k-1)}_{\delta-2}(M) \times H^{k-1}(\partial M)$$
 is defined by 
$$(\mathcal{ID}, \slashed \Delta_{\gamma_{sc}}) (\tilde v, f) := (\tilde v, \slashed \Delta_{\gamma_{sc}} f)$$
Since $(\mathcal{ID}, \slashed \Delta_{\gamma_{sc}})$ is Fredholm of index 0 and $\mathcal{Q}$ is an isomorphism, it then follows that $\mathcal{P}_1$ is Fredholm of index 0 as needed. 

\end{proof}

\vv

By the above proposition, showing that the nonlocal operator $\mathcal{P}_{sc}$ defined in \eqref{Psc} has trivial kernel is sufficient to prove that the system in \eqref{eq1} and \eqref{eq2} is uniquely solvable for every $\psi \in \mathcal{A}^{(0,k-1)}_{\delta-2}(M)$ and $\Gamma \in H^{k-1}(S^2)$. This will be the content of the next proposition.

\begin{prop} \label{prop-nonlocalpde-unique}
Fix $\delta \in (-1, -\frac{1}{2}]$. Let $u \in {\mathcal{A}^{(2,k+1)}_{\delta}}$ satisfy
\begin{equation} \label{nonlocalpde-homo}
\begin{cases}
\Delta_{g_{sc}} \tilde u-\frac{4m_0^2}{[r(r-2m_0)]^2} \tilde u = -\frac{n(n-2)m_0}{2}\frac{4m_0^2}{[r(r-2m_0)]^2} \left(\frac{4-n}{n(n-2)m_0} \tilde u|_{\partial M}+ \partial_r\tilde u|_{\partial M} \right), & \text{on $M$}\\
\frac{4}{nm_0} \partial_r \tilde u + 2\slashed \Delta \tilde u+ \frac{4}{n^2(n-2)m_0^2} \tilde u  = 0, & \text{on $\partial M$} 
\end{cases}
\end{equation}
Then $\tilde u=0$.
\end{prop}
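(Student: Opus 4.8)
The strategy is to mimic the reduction used in the proof of Lemma~\ref{main-est}: since $(M,g_{sc})$, the potential $\frac{4m_0^2}{[r(r-2m_0)]^2}$ and the boundary operators are all rotationally invariant, expanding $\tilde u(r,x)=\sum_{\ell\ge 0}\sum_{|m|\le\ell}a_{m\ell}(r)Y_{m\ell}(x)$ decouples \eqref{nonlocalpde-homo} into a family of ODE boundary-value problems on $[nm_0,\infty)$ indexed by $\ell$. Writing $\lambda=\ell(\ell+1)$, $c_{m\ell}=a_{m\ell}(nm_0)$, $d_{m\ell}=a_{m\ell}'(nm_0)$, the interior equation becomes a second-order linear ODE whose inhomogeneity is an explicit linear combination of $c_{m\ell}$ and $d_{m\ell}$ times $[r(r-2m_0)]^{-2}$, with multiplier $\kappa_{m\ell}=\tfrac{n-4}{2}c_{m\ell}-\tfrac{n(n-2)m_0}{2}d_{m\ell}$; the boundary equation collapses to the single scalar relation $2(n-2)m_0\,d_{m\ell}=\big(\lambda-\tfrac{2}{n}\big)c_{m\ell}$; and membership of $\tilde u$ in $\mathcal{A}^{(2,k+1)}_{\delta}(M)$ with $\delta<0$ forces $a_{m\ell}(r)\to 0$ as $r\to\infty$ by Proposition~\ref{prop-spaces}(a). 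The plan is to show that for every $(m,\ell)$ these three homogeneous linear constraints force $a_{m\ell}\equiv 0$.

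I would treat the $\ell=0$ mode by hand, since there the ODE is completely explicit. The two homogeneous solutions of the interior equation are $e^{\pm 2u_{sc}}=\big(\tfrac{r-2m_0}{r}\big)^{\pm 1}$, a constant is a particular solution, and the decay requirement pins the admissible solution down to
\[
a_{00}(r)=-\frac{2m_0\alpha}{r}+\frac{2m_0\beta}{r-2m_0},
\]
with $\alpha+\beta$ fixed by the source/decay constraint. Substituting into the two boundary relations and eliminating $\alpha,\beta$ leads, after a short computation, to the identity $n(n-2)\,c_{00}=0$; since $n>2$, this gives $c_{00}=0$ and hence $a_{00}\equiv 0$.

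For $\ell\ge 1$ I would solve the interior equation explicitly as well: after the substitution $z=r/m_0-1$ used in the proof of Lemma~\ref{main-est}, the homogeneous equation is the associated Legendre equation of degree $\ell$ and order $2$, whose solutions are the associated Legendre functions $P_\ell^{2}$ and $Q_\ell^{2}$, with $Q_\ell^{2}=O(z^{-\ell-1})$ the decaying branch. Variation of parameters then expresses the solution lying in $\mathcal{A}^{(2,k+1)}_{\delta}(M)$ as a one-parameter family (the free parameter being the coefficient of $Q_\ell^{2}$), the remaining part being a particular solution proportional to $\kappa_{m\ell}$; imposing the value and derivative at $r=nm_0$ together with the Robin relation yields a homogeneous linear system in $(c_{m\ell},d_{m\ell})$ and the free parameter, and one must check that its determinant is nonzero for every $\ell\ge 1$ and every $n>2$, using the explicit asymptotics and recursions for the Legendre functions collected in the Appendix (of the type \eqref{estPQ}--\eqref{estPQ'}). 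I expect this determinant to be bounded away from zero for all large $\ell$ because the equation is then strongly coercive (large $\lambda$), so only finitely many $\ell$ need a delicate estimate. A cleaner alternative for the $\ell\ge 1$ range may be an energy identity obtained by multiplying \eqref{nonlocalpde-homo} by $\tilde u$ and integrating over $M$ against $dV_{g_{sc}}$: the bulk terms $-\int_M|\nabla\tilde u|^2-\int_M\frac{4m_0^2}{[r(r-2m_0)]^2}\tilde u^2$ are one-signed, and one would try to rewrite the inhomogeneous contribution together with the boundary term $-\int_{\partial M}\tilde u\,\partial_r\tilde u$ purely in terms of boundary data via the Robin relation; making this closure work cleanly requires some care, so I regard the explicit Legendre route as the safer one.

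The main obstacle is the degeneration as $n\downarrow 2$, i.e.\ as the Schwarzschild sphere approaches the horizon: the Robin relation carries the factor $(n-2)m_0$ in front of $d_{m\ell}$, and the nonlocal coupling likewise involves $n-2$, so the determinants above (equivalently, the coercivity constant in the energy approach) vanish in the limit $n\to 2$ — indeed the $\ell=0$ reduction literally produces the factor $n(n-2)$. One therefore cannot avoid tracking the precise numerical coefficients and verifying that no additional cancellation occurs while $n>2$; this quantitative point is the heart of the proposition, and is the concrete manifestation of the difficulty anticipated in the introduction via the black hole uniqueness theorem (the failure of the extension problem when the mean curvature tends to zero).
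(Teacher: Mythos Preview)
Your setup is the paper's: decompose into spherical harmonics, rescale out $m_0$, treat $\ell=0$ by explicit solution, and then handle $\ell\ge 1$ mode by mode using the decay constraint. Your identification of the $n\downarrow 2$ degeneration as the crux is also correct.

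The gap is in the $\ell\ge 1$ step. You propose two routes but do not execute either: the associated-Legendre/determinant approach would require you to actually compute the $3\times 3$ homogeneous system and prove its determinant is nonzero for \emph{every} $\ell\ge 1$ and every $n>2$; you only assert this should hold for large $\ell$ by coercivity and say ``finitely many $\ell$ need a delicate estimate'' without supplying that estimate. The energy route you yourself flag as unreliable, and indeed the nonlocal right-hand side of \eqref{nonlocalpde-homo} does not have a fixed sign, so there is no clean closure.

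The paper avoids both options and instead uses a qualitative ODE comparison argument. After the rescaling, one writes the $\ell$-th equation as an IVP with data $a_\ell(1)=\alpha_\ell C_\ell$, $a_\ell'(1)=\beta_\ell C_\ell$ (explicit negative constants $\alpha_\ell,\beta_\ell$) and forcing proportional to $C_\ell$; assuming $C_\ell\ne 0$ one normalizes $C_\ell=1$ and shows $a_\ell\to-\infty$, contradicting decay. The key tool is an elementary monotonicity lemma: for the operator $\frac{d}{dr}[r(r-\tfrac{2}{n})g']=h_1 g+h_2$ with $h_1>0$, if $g$ and $g'$ are ever simultaneously negative and $h_2\le 0$, they stay negative and $g\to-\infty$. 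One then splits $a_\ell=f_\ell+g_\ell$ so that $f_\ell$ has zero initial slope and manifestly blows up by the lemma, while $g_\ell$ has zero initial value; showing $g_\ell<0$ for all $\ell\ge 1$ is done by \emph{induction on $\ell$}, using the explicit solution $\tilde g_0(r)=-\tfrac{n(r-1)}{2(r-2/n)}$ as the base case and the lemma again for the inductive step. This sidesteps any special-function determinant and gives a uniform argument for all $\ell$ and all $n>2$.
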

\begin{proof}
Similarly to what was done in the proof of lemma \eqref{main-est}, we utilize the spherical symmetry of the conformal Schwarzschild metric to reduce the system in \eqref{eq1} and \eqref{eq2} to differential equations on the coefficients of $\tilde u$ with respect to its spherical harmonics decomposition: 

\vv

\begin{equation}\label{u sph harm}
\ds \tilde u(r, x) = \sum_{\ell=0}^{\infty} \sum_{m=-\ell}^{\ell} \tilde a_{m\ell}(r) Y_{m\ell}(x) 
\end{equation}
for $r \in [nm_0,\infty)$ and $x\in \partial M$. 

\vv



We define the functions $a_{m\ell} (r) := \tilde a_{m\ell}(nm_0 r)$ on $[1,\infty)$. Using the discussion in lemma \eqref{main-est}, the condition $\norm{\tilde u}_{\mathcal{A}^{(2,k+1)}_{\delta}}< \infty$ in particular implies $\norm{a_{m\ell}}_{1,\delta} < \infty$ for every $m$ and $\ell$, where 

\begin{equation}
\norm{a_{m\ell}}^2_{1,\delta} = \int_{1}^{\infty} r^{-2\delta+1} (a_{m\ell}'(r))^2 dr + \int_{1}^{\infty} r^{-2\delta-1}(a_{m\ell}(r))^2 dr
\end{equation}

\vv

The system in \eqref{eq1} and \eqref{eq2} as well as the condition $\norm{a_{m\ell}}_{1,\delta}<\infty$ implies the following non-local differential equations on all the coefficients $\tilde a_{m\ell}(r)$: 

\begin{align} \label{ode-333}
\begin{cases}  \ds
r(r-2m_0) \tilde a_{m\ell}''(r) + 2(r-m_0) \tilde a_{m\ell}'(r) - \left( \frac{4m_0^2}{r(r-2m_0)}+\ell(\ell+1)\right) \tilde a_{m\ell}(r) \\ 
\hspace{2cm} = -\frac{2n(n-2)m_0^3}{r(r-2m_0)} \bigg(  \frac{4-n}{n(n-2)m_0} \tilde a_{m\ell}(nm_0)+ \tilde a_{m\ell}'(nm_0) \bigg), {\quad} r\in [n m_0,+\infty)  \\
\frac{2}{nm_0} \tilde a_{m\ell}'(nm_0) -\frac{1}{n(n-2)m_0^2} (\ell(\ell+1) -\frac{2}{n}) \tilde a_{m\ell}(nm_0) = 0,\\
\norm{ a_{m\ell}}_{1,\delta} < \infty
\end{cases}
\end{align}

The functions $a_{m\ell}$, using \eqref{ode-333}, 
satisfy the following similar 
non-local differential equations: 

\begin{align} \label{ode-33}
\begin{cases} 
r(r-\frac{2}{n}) a_{m\ell}''(r) + 2(r-\frac{1}{n}) a_{m\ell}'(r) - \left( \frac{4}{n^2r(r-\frac{2}{n})}+\ell(\ell+1)\right) a_{m\ell}(r) \\ 
\hspace{6cm} = -\frac{2(n-2)}{n^2r(r-\frac{2}{n})} \bigg(  \frac{4-n}{n-2} a_{m\ell}(1)+ a_{m\ell}'(1) \bigg), \\
\frac{2}{n^2} a_{m\ell}'(1) -\frac{1}{n(n-2)} (\ell(\ell+1) -\frac{2}{n}) a_{m\ell}(1) = 0,\\
\norm{a_{m\ell}}_{1,\delta} < \infty
\end{cases}
\end{align}

Note that the mass parameter $m_0$ does not appear in the non-local differential equation for 
$a_{m\ell}$. From here onwards we will study the system \eqref{ode-33} instead of \eqref{ode-333}. 
\vv

\vv

We consider \eqref{ode-33} for any nonnegative integer $\ell$ and seek to derive that $a_{m\ell}(r)=0$ is the only solution. 
We consider the $\ell=0$ case separately. We replace the last condition in \eqref{ode-33} with $a_{m\ell}'(1) = C$ and find the explicit (unique) solution to this shooting problem to be

\begin{equation} \label{sol-a00}
a_{00}(r) = -C\, \frac{-2+n+6 r-3 nr-2 nr^2+n^2 r^2}{r(nr-2)},
\end{equation}

It then can easily be verified that $\lim_{r\to \infty}a_{00}(r) = 0$ if and only if $C=0$, implying that $a_{00} = 0$ is the only solution to \eqref{ode-33}.

\vv


\vv

We conclude the only solutions to \eqref{ode-33} in the $\ell=0$ case 
is the zero solution. 

\vvv

We now deal with the $\ell\geq 1$ case. We will write $a_{\ell}$ instead of $a_{m \ell}$ for simplicity. 

Define the following constants: 
\begin{itemize}
 \item \[ C_{\ell} = -\frac{2(n-2)}{n^2} \left( \frac{4-n}{n-2} a_{\ell}(1) + a_{\ell}'(1) \right)\] 
 \item \[ \alpha_{\ell} = \frac{n^2}{n(2-\ell(\ell+1)) - 6}\] 
 \item \[ \beta_{\ell} = \frac{n^2(n \ell(\ell+1) - 2)}{(n(2-\ell(\ell+1)) -6)(2(n-2))} \] 
 \end{itemize}

Note that $\alpha_{\ell}$ and $\beta_{\ell}$ are well defined for $\ell\geq 1$ and are both negative. 

Then the function $a_{\ell}$ solves the following initial value problem: 

\begin{equation} \label{ode-3}
\begin{cases}
r(r-\frac{2}{n}) a_{\ell}''(r) + 2(r-\frac{1}{n}) a_{\ell}'(r) - \left( \frac{4}{n^2r(r-\frac{2}{n})}+l(l+1)\right) a_{\ell}(r) =  \frac{1}{r(r-\frac{2}{n})} C_{\ell}  \\
a_{\ell}'(1) = \beta_{\ell} C_{\ell}\\
a_{\ell}(1) = \alpha_{\ell} C_{\ell}
\end{cases}
\end{equation}

If $C_{\ell} = 0$, then $a_{\ell}=0$ by the existence and uniqueness theorem from ODE theory. Suppose now that $ C_{\ell}\neq 0$. By considering $\frac{a_{\ell}}{C_{\ell}}$ instead of $a_{\ell}$, we can assume without loss of generality that $C_{\ell}=1$. 

We rewrite the system in the following way: 

\begin{equation} \label{ode-3}
\begin{cases}
\frac{d}{dr} \left[ r(r-\frac{2}{n}) a_{\ell}'(r)  \right] = \left( \frac{4}{n^2r(r-\frac{2}{n})}+\ell(\ell+1)\right) a_{\ell}(r) +  \frac{1}{r(r-\frac{2}{n})}   \\
a_{\ell}'(1) = \beta_{\ell} \\
a_{\ell}(1) = \alpha_{\ell}
\end{cases}
\end{equation}

We will show that $a_{\ell}$ blows up at infinity contradicting that $\norm{a_{\ell}}_{1,\delta}<\infty$. This will then imply that $C_{\ell} = 0$ and hence $a_{\ell}=0$. First, we prove a technical lemma.\\

\begin{lem} \label{tech}
Let $h_1,h_2$ be smooth functions on $[1,\infty)$ such that $h_1$ is positive and $\lim_{r\to \infty}h_1(r) = C_1$ for some $C_1>0$. Let $g$ be a function on $[1,\infty)$ satisfying the following ODE: 
\begin{equation} \label{generalode}
\frac{d}{dr} \left[ r(r-\frac{2}{n}) g'(r)  \right] = h_1(r)g(r) + h_2(r)
\end{equation}
Then the following is true. 
\begin{itemize}
\item Suppose that $h_2(r) \geq 0$ ($\leq 0$) on $[1,\infty)$ and that both $g(r_*)$ and $g'(r_*)$ are positive (negative) for some $r_* \in [1,\infty)$. Then $g$ and $g'$ are positive (negative) on $(r_*,\infty)$. 
\item Suppose that $g$ and $g'$ are positive (negative) on $(r_*, \infty)$ for some $r_*\in [1,\infty)$ and that $h_2(r) = O(r^{-2})$. Then $\lim_{r\to \infty} g(r) = \infty$ ($-\infty$).
\end{itemize}
\vv

\end{lem}
\begin{proof}
Suppose that $h_2$ is nonnegative everywhere. Let $r_* \in [1,\infty)$ be such that $g'(r_*) >0$ and $g(r_*) >0$. We will prove that $g'(r)>0$ on $(r_*, \infty)$ by using a simple bootstrap method.  By continuity of $g'(r)$, we know that $g'(r)>0$ on $[r_*, r_*+\delta)$ for some $\delta>0$. Then the set $B:= \{ r \in (r_*, \infty) \mid g'(s) >0 \text{ for $s \in [r_*, r)$}\}$ is nonempty. Suppose that $R:= \sup B < \infty$. By continuity, we have that $g'(R) = 0$ and $g'(r) > 0$ for $r\in (r_*, R)$. Since $g(r_*)>0$ and $g$ is increasing on $(r_*, R)$, we have that $g(R) >0$. By letting $r=R$ in equation \eqref{generalode}, it follows that $\frac{d}{dr} [ r(r-2/n)g']$ is positive at $R$ and, in turn, on a neighbourhood of $R$. This implies that $r(r-2/n)g'(r)$ is increasing on a neighbourhood of $R$, which implies that $0< r(r-2/n)g'(r) < R(R-2/n)g'(R)$ for $r<R$ and close to $R$. As this contradicts that $g'(R) = 0$, we conclude that $\sup B = \infty$ and hence $g'$ and $g$ are positive on $(r_*, \infty)$. 

\vv

Suppose now that $g$ and $g'$ are positive on $(r_*, \infty)$ for some $r_* \in [1,\infty)$ and that $h_2(r) =O(r^{-2})$. In virtue of the positivity of $g'$ and $g$ as well as the monotone convergence theorem, it follows that $\lim_{r\to \infty} g(r)$ either is a positive number or is $\infty$. Suppose that $\lim_{r \to \infty} g(r) = A >0$, which in particular implies that $g'$ is integrable.

\vv

By integrating equation \eqref{generalode}, we get that

\begin{equation}
r(r-2/n)g'(r) = \frac{n-2}{n} g'(1) + \int_{1}^r [h_1(s) g(s) + h_2(s)] ds
\end{equation}

Using the fact that $h_1$ and $g$ are $O(1)$ and that $h_2(r) = O(r^{-2})$, it follows that $\sup_{r\geq1} r g'(r) < \infty$. 

\vv

Furthermore, we have that

\begin{align}
rg'(r) - g'(1) &= \int_1^r [sg'(s)]' ds\\
&= \int_1^r g'(s) ds + \int_{1}^r sg''(s) ds \\
&= \int_1^r \frac{h_1(s) g(s)}{s-2/n} ds + \int_1^r\frac{h_2(s)}{s-2/n} ds - \int_1^r \frac{s}{s-2/n} g'(s) ds
\end{align}

It follows that there exists a constant $M>0$ such that 

\begin{equation}
\int_1^r \frac{h_1(s) g(s)}{s-2/n} ds \leq M
\end{equation}
for any $r \in [1,\infty)$; since the integrand is positive, the limit as $r$ tends to $\infty$ exists. In particular, this implies that $\frac{g}{r} \in L^1([1,\infty))$, which then contradicts that $A$ is positive. Hence, we see  that $\lim_{r\to \infty} g(r) = \infty$, as needed.

\vv

The case when $h_2$ is nonpositive is identical. 

\vv

\end{proof}

\begin{cor} \label{cortech}
Let $h_1,h_2$ be smooth functions on $[1,\infty)$ such that $h_1$ is positive and $\lim_{r\to \infty}h_1(r) = C_1$ for some $C_1>0$. Let $g$ and $\tilde g$ be functions on $[1,\infty)$ satisfying the following ODE: 
\begin{equation}
\frac{d}{dr} \left[ r(r-\frac{2}{n}) g'(r)  \right] = h_1(r)g(r) + h_2(r), \qquad \frac{d}{dr} \left[ r(r-\frac{2}{n}) \tilde g'(r)  \right] = h_1(r)\tilde g(r) + h_2(r),
\end{equation}
If $g(1) = \tilde g(1)$ and $g'(1) < \tilde g'(1)$, then $g(r) < \tilde g(r)$ for any $r\in (1,\infty)$. 
\end{cor}
\begin{proof}
Define $f:=g - \tilde g$ and observe that $f$ satisfies 
\begin{equation}
\frac{d}{dr} \left[ r(r-\frac{2}{n}) f'(r)  \right] = h_1(r)f(r)
\end{equation}

\noindent Observe also that $f(1) = 0$ and $f'(1) <0$. This in particular implies that there exists an $r_*>1$ close enough to $1$ such that $f(r)$ and $f'(r)$ are negative for any $r \in (1,r_*)$. By invoking lemma \eqref{tech}, we conclude that $f(r)<0$ for any $r\in (1,\infty)$ as needed. 
\end{proof}

\vv


\vv

We now return to our goal of showing that $a_{\ell}$ blows up at $\infty$.

\vv

We first decompose $a_{\ell} = f_{\ell} + g_{\ell}$ where $f_{\ell}$ solves

\begin{equation} \label{flode}
\begin{cases}
\frac{d}{dr} \left[ r(r-\frac{2}{n}) f_{\ell}'(r)  \right] = \left( \frac{4}{n^2r(r-\frac{2}{n})}+\ell(\ell+1)\right) f_{\ell}(r) - \frac{4 \alpha_{\ell}}{n^2 r(r-2/n)}  \\
f_{\ell}'(1) = 0 \\
f_{\ell}(1) = \alpha_{\ell}
\end{cases}
\end{equation}

and $g_{\ell}$ solves 

\begin{equation} 
\begin{cases}
\frac{d}{dr} \left[ r(r-\frac{2}{n}) g_{\ell}'(r)  \right] = \left( \frac{4}{n^2r(r-\frac{2}{n})}+\ell(\ell+1)\right) g_{\ell}(r) + \frac{1+(4/n^2)\alpha_{\ell}}{r(r-2/n)}  \\
g_{\ell}'(1) = \beta_{\ell} \\
g_{\ell}(0) = 0
\end{cases}
\end{equation}

\noindent By letting $r=1$ in the equation for $f_{\ell}$, we observe that $r(r-2/n)f_{\ell}'(r)$ is decreasing near $r=1$; since $f'_{\ell}(1) = 0$, it follows that $f_{\ell}$ and $f_{\ell}'$ are negative near $r=1$. In particular, letting $\tilde f_{\ell}(r) := f_{\ell}(r) - \alpha_{\ell}$, we also have that $\tilde f_{\ell}$ and $\tilde f'_{\ell}$ are negative near $r=1$. Using the system in \eqref{flode}, we deduce that $\tilde f_{\ell}$ satisfies the following: 
\begin{equation}
\begin{cases}
\frac{d}{dr} \left[ r(r-\frac{2}{n}) \tilde f_{\ell}'(r)  \right] = \left( \frac{4}{n^2r(r-\frac{2}{n})}+\ell(\ell+1)\right) \tilde f_{\ell}(r) + \alpha_{\ell} \ell(\ell+1)  \\
\tilde f_{\ell}'(1) = 0 \\
\tilde f_{\ell}(1) = 0
\end{cases}
\end{equation}

 \noindent We invoke lemma \eqref{tech} on $\tilde f_{\ell}$ to deduce that $\tilde f_{\ell}$ and $\tilde f'_{\ell}$ are negative on $(1,\infty)$.
\noindent This in particular implies that $f_{\ell}(r)$ and $ f_{\ell}'(r)$ are negative on $(1,\infty)$. We again invoke lemma \eqref{tech} on $f_{\ell}$ to conclude that $\lim_{r\to \infty} f_{\ell}(r) = - \infty$. 

\vv

\noindent It suffices to show that $g_{\ell}(r)<0$ for all $r\in [1,\infty)$. We first observe from the definition of $\beta_{\ell}$ and $\alpha_{\ell}$ that 
\begin{align}
 \frac{\beta_{\ell}}{1+(4/n^2)\alpha_{\ell}} &=-\frac{n^2}{2(n-2)} \left( 1+ \frac{2(n-2)}{n(\ell(\ell+1)-2) +2} \right)\\
 &< -\frac{n^2}{2(n-2)} \label{beta1}
 \end{align}
 for all $n>2$ and $\ell \in \N$. In particular, we have that $1+(4/n^2)\alpha_{\ell} >0$ for every $ n>2$ and $\ell \in \N$.

\vv


Let $\tilde g_{\ell}$ be the function on $[1,\infty)$ solving 

\begin{equation} \label{g1}
\begin{cases}
\frac{d}{dr} \left[ r(r-\frac{2}{n}) \tilde g_{\ell}'(r)  \right] = \left( \frac{4}{n^2r(r-\frac{2}{n})}+\ell(\ell+1)\right) \tilde g_{\ell}(r) + \frac{1}{r(r-2/n)}  \\
\tilde g_{\ell}'(1) = -\frac{n^2}{2(n-2)} \\
\tilde g_{\ell}(1) = 0
\end{cases}
\end{equation}

In light of corollary \eqref{cortech} along with equation \eqref{beta1}, it follows that the negativity of $\tilde g_{\ell}(r)$ on $(1,\infty)$ implies the negativity of $\frac{g_{\ell}(r)}{1+(4/n^2)\alpha_{\ell}}$ on $(1,\infty)$, which in turn implies that the negativity of $g_{\ell}(r)$ on $(1,\infty)$. It then suffices to show that $\tilde g_{\ell}(r)<0$ on $(1,\infty)$. 

\vv

We will prove that $\tilde g_{\ell}(r)<0$ on $(1,\infty)$ by induction on $\ell$. We first find the solution for $\ell = 0$ to be:

\begin{equation}
\tilde g_0(r) = -\frac{n(r-1)}{2(r-2/n)}
\end{equation}

which is negative everywhere.

\vv

Now suppose that  $\tilde g_{\ell}(r)<0$ on $(1,\infty)$ for some nonnegative integer $\ell$. Define $h_{\ell} := \tilde g_{\ell+1} - \tilde g_{\ell}$, which will solve: 

\begin{equation} 
\begin{cases}
\frac{d}{dr} \left[ r(r-\frac{2}{n}) h_{\ell}'(r)  \right] = \left( \frac{4}{n^2r(r-\frac{2}{n})}+(\ell+1)(\ell+2)\right) h_{\ell}(r) + 2(\ell+1) \tilde g_{\ell}(r)\\
h_{\ell}'(1) = 0 \\
h_{\ell}(1) = 0
\end{cases}
\end{equation}

We directly compute $h_{\ell}''(1) = \frac{n}{n-2} 2(\ell+1)\tilde g_{\ell}(1)= 0$ and $h_{\ell}'''(1) = \frac{n}{n-2} 2(\ell+1)\tilde g_{\ell}'(1)<0$. This then implies that $h_{\ell}$ and $h_{\ell}'$ are negative near $r=1$. Using the fact that $\tilde g_{\ell}(r)<0$ on $(1,\infty)$ and invoking lemma \eqref{tech}, it follows that $h_{\ell}(r)<0$ on $(1,\infty)$, which in turn implies that $\tilde g_{\ell+1}(r)<0$ on $(1,\infty)$ as needed. 

\vv

\vv

We have finally shown that $a_{\ell}$ blows up at infinity for every $\ell \in \N$ contradicting that $\norm{a_{\ell}}_{1,\delta}<\infty$. We conclude that the assumption that $C_{\ell} \neq 0$ was false and hence $a_{\ell} = 0$ for every $\ell \in \N$ and, in turn, $\tilde u = 0$.

\end{proof}

This concludes Step 1, and we move on to Step 2.

\vvv

{\Large \bf Step 2: Solving for $\tilde g$ and $\tilde \omega$} 

\vv

Let $\tilde u \in  {\mathcal{A}^{(2,k+1)}_{\delta}}$ be the function satisfying equations \eqref{eq1} and \eqref{eq2} with $\psi$ and $\Gamma$ given by equations \eqref{psi} and \eqref{Gamma}. We wish to show that there exists a unique $\tilde g \in T_{g_{sc}}\mathcal{M}^k_{\delta}$ and a conformal Killing field $\tilde \omega$ satisfying equations \eqref{1st} to \eqref{3rd} and \eqref{5th} to \eqref{8th}. 

\vv

 Equation \eqref{8th} determines uniquely the initial data for $\widetilde{trK}$, which is given by

\begin{equation} \label{initialtrK}
\left. \widetilde{trK}\right|_{\partial M} = H +2\partial_r \tilde u |_{\partial M} -\frac{2}{nm_0} \tilde u|_{\partial M} 
\end{equation}
and is living in $H^k(\partial M)$. We rewrite the ODE \eqref{2nd} obeyed by $\widetilde{trK}$ here for convenience. 

 \begin{equation*} 
\partial_r \widetilde{trK} + trK_{sc} \widetilde{trK}  + 4(\partial_r u_{sc})( \partial_r \tilde u) = B, \quad \text{on $M$} 
\end{equation*}
which, together with the initial condition in \eqref{initialtrK}, determines uniquely $\widetilde{trK}$ on $M$. We explicitly solve for $\widetilde{trK}$ on $M$ to get: 

\begin{equation}
\widetilde{trK}(r,p)= \frac{n(n-2)m_0^2}{r(r-2m_0)}\left. \widetilde{trK} \right|_{\partial M}(p) - \frac{4m_0}{r(r-2m_0)} \Big(\tilde u(r,p) - \tilde u|_{\partial M}(p)\Big) + \frac{1}{r(r-2m_0)} \int_{nm_0}^r s(s-2m_0) B(s,p)  ds
\end{equation}
for $r\in [nm_0,\infty)$ and $p \in S^2$. 

\vv

We observe that 

\begin{itemize} 
\item $(\partial_r u_{sc})( \partial_r \tilde u) \in  \LtoH{\delta-2}{k}$
\item $B \in \LtoH{\delta-2}{k}$. 
\end{itemize}

and so $\widetilde{trK}$ lies in $\HtoH{1}{\delta-1}{k}$.


\vvv

We turn our attention to $\widetilde{\hat K}$. We first recall the well known fact regarding the divergence operator on symmetric traceless tensors on $S^2$ (see \cite{christodoulou}). 

\begin{prop}
Let $k \geq2$. Let $\gamma$ be a smooth metric on $S^2$ with positive curvature. Denote by $\mathcal{D}^k(S^2)$ the space of traceless symmetric $(0,2)$ tensors on $(S^2, \gamma)$ with components in $H^k(S^2)$. Let ${\Omega^{\perp}}^{k-1}(\partial M)$ be the space of vector fields on $S^2$ with components in $H^{k-1}(M)$ that are $L^2$ orthogonal to conformal Killing vector fields on $(S^2,\gamma)$. Then the divergence operator $\cancel{div}_{\gamma}$ is an isomorphism from $\mathcal{D}^k(S^2)$ to ${\Omega^{\perp}}^{k-1}(S^2)$
\end{prop}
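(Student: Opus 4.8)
The plan is to exploit the fact that, up to a negative constant, $\cancel{div}_{\gamma}$ acting on traceless symmetric $(0,2)$-tensors is the formal $L^2$-adjoint of the conformal Killing operator on vector fields, combine this with the Fredholm theory of elliptic operators on the closed surface $S^2$, and close the argument with a vanishing statement that is special to $S^2$. First I would record the integration-by-parts identity: for a vector field $X$ on $S^2$ and $h \in \mathcal{D}^k(S^2)$,
\[
\int_{S^2} \langle h, \widehat{\mathcal{L}_X \gamma} \rangle_{\gamma}\, d\sigma_{\gamma} = \int_{S^2} \langle h, \mathcal{L}_X \gamma \rangle_{\gamma}\, d\sigma_{\gamma} = -2 \int_{S^2} \langle \cancel{div}_{\gamma} h, X \rangle_{\gamma}\, d\sigma_{\gamma},
\]
the first equality because $h$ is traceless. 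Hence the conformal Killing operator $\mathcal{C}_{\gamma}\colon X \mapsto \widehat{\mathcal{L}_X \gamma}$ and $-2\,\cancel{div}_{\gamma}$ are mutually formally adjoint. On a surface each of $\mathcal{C}_{\gamma}$ and $\cancel{div}_{\gamma}$ is a determined elliptic operator between rank-two bundles: a direct symbol computation (in an orthonormal frame diagonalizing $\xi^{\flat}$) shows the principal symbols are isomorphisms for $\xi \neq 0$. Therefore $\cancel{div}_{\gamma}\colon \mathcal{D}^k(S^2) \to \Omega^{k-1}(S^2)$ is Fredholm with closed range, and by elliptic regularity together with the adjoint relation its range is exactly the $L^2$-orthogonal complement of $\ker \mathcal{C}_{\gamma}$, i.e.\ of the space of conformal Killing vector fields on $(S^2,\gamma)$.

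Next I would identify that kernel. Since every metric on $S^2$ is globally conformal to $\gamma_{\mathbb{S}^2}$ and the conformal Killing equation $\widehat{\mathcal{L}_X \gamma} = 0$ depends only on the conformal class $[\gamma]$, the space $\ker \mathcal{C}_{\gamma}$ coincides with the conformal Killing fields of the round sphere, a six-dimensional space (the Lie algebra of $\mathrm{PSL}(2,\mathbb{C})$). Consequently $\cancel{div}_{\gamma}$ maps $\mathcal{D}^k(S^2)$ onto precisely ${\Omega^{\perp}}^{k-1}(S^2)$, which settles surjectivity. It then remains to prove injectivity, which is the conceptual crux and cannot be obtained by soft Fredholm considerations alone. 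My approach: if $h \in \mathcal{D}^k(S^2)$ satisfies $\cancel{div}_{\gamma} h = 0$, write $h = \mathrm{Re}(\phi\, dz \otimes dz)$ in an isothermal coordinate $z$ for $[\gamma]$; then the divergence-free condition is equivalent to $\partial_{\bar z}\phi = 0$, so $h$ is the real part of a global holomorphic section of $K^{\otimes 2}$ over the Riemann sphere $\mathbb{CP}^1$. Since $\deg K_{\mathbb{CP}^1}^{\otimes 2} = -4 < 0$, there are no nonzero such sections, hence $h = 0$. (Equivalently, this is the Riemann--Roch statement that the space of holomorphic quadratic differentials on a genus-$g$ surface has dimension $\max(0, 3g-3)$, which vanishes for $g = 0$; alternatively, a Bochner-type identity on the surface, using $K > 0$, forces $h = 0$, and this is the only place positive curvature would be needed.)

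Having established that $\cancel{div}_{\gamma}\colon \mathcal{D}^k(S^2) \to {\Omega^{\perp}}^{k-1}(S^2)$ is a continuous linear bijection of Banach spaces, continuity of the inverse follows from the elliptic estimate for $\cancel{div}_{\gamma}$ restricted to the complement of the kernel (or simply from the open mapping theorem), which completes the proof. The main obstacle throughout is the injectivity step: everything else is standard elliptic/Fredholm machinery, but ruling out nonzero traceless symmetric divergence-free tensors genuinely requires the topology of $S^2$ (no holomorphic quadratic differentials on $\mathbb{CP}^1$), or else the sign of the Gauss curvature in a Weitzenböck argument.
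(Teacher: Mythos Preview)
Your proof is correct and complete. The paper itself does not prove this proposition: it is stated as a ``well known fact'' with a reference to \cite{christodoulou} and is used as a black box in Step~2, so there is no argument in the paper to compare against. Your approach via the formal adjoint relation between $\cancel{div}_{\gamma}$ and the conformal Killing operator, ellipticity, and the vanishing of holomorphic quadratic differentials on $\mathbb{CP}^1$ is exactly the standard route (and is essentially what one finds in Christodoulou's monograph), so you have supplied the proof the paper omits.
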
 

The above proposition along with equation \eqref{6th} imply that $\frac{2}{n(n-2)m_0} \slashed d\tilde u + \tilde \omega-F$ must be orthogonal to conformal Killing vector fields on $(\partial M, \gamma_{sc})$. Since $\tilde \omega$ is conformal Killing on $(\partial M,\gamma_{sc})$, this requirement determines $\tilde \omega$ uniquely. Indeed, if $Y_1,...,Y_6$ is an $L^2$ orthonormal basis of conformal Killing vector fields, then $\tilde \omega$ must be 

\begin{equation}
\tilde \omega := \sum_{i=1}^6 \left( \int_{S^2} Y_i \cdot (F-\frac{2}{n(n-2)m_0} \slashed d\tilde u ) d\sigma_{\gamma_{sc}}\right) Y_i
\end{equation}

The above proposition together with \eqref{6th} determine uniquely the initial condition for $\widetilde{\hat K}$ to be 

\begin{equation} \label{initialK}
\left. \widetilde{\hat K}\right|_{\partial M} = \cancel{div}_{\gamma_{sc}}^{-1} \left( \frac{2}{n(n-2)m_0} \slashed d\tilde u + \tilde \omega-F\right) 
\end{equation}
living in $\mathcal{H}^k(\partial M)$. We rewrite the ODE \eqref{3rd} obeyed by $\widetilde{\hat K}$ here for convenience. 

 \begin{equation*} 
\mathcal{L}_{\dd{r}} {\widetilde{\hat K}}  = C, \quad \text{on $M$}
\end{equation*}
which, together with the initial condition in \eqref{initialK}, determines $\hat K$ uniquely on $M$. Fixing fermi coordinates, $(r,\theta^1, \theta^2)$, we explicitly solve for $\hat K$ on $M$ to get: 

\begin{equation}
\hat K_{ij}(r,p) = \left. \widetilde{\hat K}_{ij}\right|_{\partial M}  +  \int_{nm_0}^r C_{ij}(s,p) 
\end{equation}
for $i,j=1,2$, $r \in [nm_0,\infty)$ and $p \in S^2$.

Since $C \in \LtoHH{\delta-2}{k}$, it follows that $\hat K$ lies in $\HtoHH{1}{\delta-1}{k}$. 
\vv

Equation \eqref{7th} determines uniquely initial data for $\tilde g$ given by 

\begin{equation} \label{initialg}
\left.  \tilde g \right|_{\partial M} = \frac{n-2}{n} \left(  2 n^2m_0^2 \, \tilde u \,  \gamma_{\mathbb{S}^2}+ G \right)
\end{equation}
living in $\mathcal{H}^k(\partial M)$. The evolution of $\tilde g$ is determined by $\widetilde{trK}$ and $\widetilde{\hat K}$ in the following equation:

 \begin{equation} \label{evolg}
 \mathcal{L}_{\dd{r}} \tilde g = 2 \widetilde{\hat K} + \widetilde{trK} \, {g_{sc}} + trK_{sc} \, \tilde g
 \end{equation}
Equations \eqref{evolg} and \eqref{initialg} determine uniquely $\tilde g$ to be, in fermi coordinates $(r,\theta^1, \theta^2)$, 

\begin{equation}
\tilde g_{ij} = r(r-2m_0) \int_{nm_0}^r \frac{1}{s(s-2m_0)} \left( 2 \widetilde{\hat K}_{ij} + \widetilde{trK} {g_{sc}}_{ij} \right) ds + \frac{r(r-2m_0)}{nm_0^2} G_{ij}
\end{equation}

In light of the fact that $\widetilde{trK} \in \HtoH{1}{\delta-1}{k}$ and $\widetilde{\hat K} \in \HtoHH{1}{\delta-1}{k}$, it follows $\tilde g$ is of the form $\tilde g = r^2(G+h(r))$ where $G \in \mathcal{H}^k(S^2)$ and $h\in \HtoHH{2}{\delta}{k}$; this implies that $\tilde g \in T_{g_{sc}}\mathcal{M}^k_{\delta}$ as needed.

\vv

We have then shown that there exists a unique $\tilde g$ and $\tilde \omega$ satisfying equations \eqref{2nd}, \eqref{3rd} and \eqref{5th} to \eqref{8th}. It follows by lemma \eqref{decouple} that equation \eqref{1st} is satisfied as well. This concludes Step 2, and we move on to Step 3.

\vvv

{\Large \bf Step 3: Solving for $\tilde X$} 

\vv

We wish to show that there exists a unique $\tilde Y \in \widehat{\mathcal{X}}^{2}_{\delta} (M)$ satisfying 

\begin{equation}
\Delta_{g_{sc}, conf} \tilde Y = D
\end{equation}
where $D \in \mathcal{X}^{0}_{\delta-2}(M)$. Similar results have been shown in \cite{maxwell} for the above equation with trivial Dirichlet and Neumann conditions. In our case, vector fields $Y$ in $\widehat{\mathcal{X}}^2(M)$ satisfy the following mixed boundary conditions 
\begin{equation}
g(Y, \dd{r}) = 0, \quad \left( \mathcal{L}_{\dd{r}} Y \right)^T = 0
\end{equation}
 The isomorphism of the operator $\Delta_{g_{sc}, conf}$ in our space follows by minor modifications of the proof in \cite{maxwell}. We add the proof here for the sake of completeness.

\begin{prop}
Let $\delta \in (-1,-\frac{1}{2}]$. The operator $\Delta_{g_{sc},conf}$ is an isomorphism from $\widehat{\mathcal{X}}^{2}_{\delta}(M)$ to $\mathcal{X}^{0}_{\delta-2}(M)$. 
\end{prop}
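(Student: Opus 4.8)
The plan is to adapt the argument of \cite{maxwell} for the York (conformal vector) Laplacian on asymptotically flat manifolds to the mixed boundary conditions $g(Y,\dd{r})=0$, $\left(\mathcal{L}_{\dd{r}}Y\right)^T=0$ that define $\widehat{\mathcal{X}}^{2}_{\delta}(M)$. Since $\Delta_{g_{sc},conf}Z = \Delta_{g_{sc}}Z + \tfrac13\nabla\!\left(\mathrm{div}_{g_{sc}}Z\right) + \mathrm{Ric}_{g_{sc}}(Z,\cdot)$, its principal symbol $\xi\mapsto -\bigl(|\xi|^2\,\mathrm{Id}+\tfrac13\,\xi\otimes\xi^{\sharp}\bigr)$ is negative definite, so $\Delta_{g_{sc},conf}$ is a second order elliptic operator. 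It then suffices to prove (i) that $\Delta_{g_{sc},conf}\colon\widehat{\mathcal{X}}^{2}_{\delta}(M)\to\mathcal{X}^{0}_{\delta-2}(M)$ is Fredholm of index $0$, and (ii) that it has trivial kernel; an index-$0$ Fredholm operator with trivial kernel is an isomorphism.

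For step (ii), suppose $Y\in\widehat{\mathcal{X}}^{2}_{\delta}(M)$ satisfies $\Delta_{g_{sc},conf}Y=0$. Integrating by parts exactly as in the computation leading to \eqref{intbypartsX} gives
\[
0=\int_M Y^{\mu}\,\Delta_{g_{sc},conf}Y_{\mu}\,dV_{g_{sc}} = -\frac12\int_M\bigl|\conflie{g_{sc}}Y\bigr|^2\,dV_{g_{sc}} - \int_{\partial M}\conflie{g_{sc}}Y\!\left(Y,\dd{r}\right)d\sigma.
\]
The flux at infinity vanishes: by Proposition \ref{prop-spaces} the components of $Y$ and $\nabla Y$ are $o(r^{\delta})$ and $o(r^{\delta-1})$, so the integral over the coordinate sphere of radius $r$ is $o(r^{2\delta+1})=o(1)$ since $\delta\le-\tfrac12$. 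On $\partial M$, using $g_{sc}(Y,\dd{r})\equiv0$ there (whence $Y\bigl(g_{sc}(Y,\dd{r})\bigr)=0$ as $Y$ is tangential to $\partial M$) and the boundary condition $\left(\mathcal{L}_{\dd{r}}Y\right)^T=0$, the manipulation already carried out for $\bar X$ in the main text shows $\conflie{g_{sc}}Y\!\left(Y,\dd{r}\right)=g_{sc}\!\left(\mathcal{L}_{\dd{r}}Y,Y\right)=g_{sc}\!\left(\left(\mathcal{L}_{\dd{r}}Y\right)^T,Y\right)=0$. Hence $\conflie{g_{sc}}Y\equiv0$, i.e.\ $Y$ is a conformal Killing field on $(M,g_{sc})$ that decays at infinity (as $\delta<0$). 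By interior elliptic regularity $Y\in\mathcal{X}^{3}_{\delta}(M)$, and since $g_{sc}$ is asymptotically flat and meets the hypotheses of Lemma \ref{confkill=0} with $\gamma_{\infty}=\gamma_{\mathbb{S}^2}$ (so any threshold $\epsilon>0$ works), we conclude $Y=0$; this is also the classical nonexistence result of \cite{boostproblem,york}.

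For step (i), we invoke the weighted Schauder/$L^2$ theory for elliptic boundary value problems on asymptotically flat manifolds (see \cite{Bartnik1,maxwell}). At the principal level the boundary system $\bigl(g(\cdot,\dd{r}),\,\left(\mathcal{L}_{\dd{r}}\cdot\right)^T\bigr)$ decouples into a Dirichlet condition on the normal component of $Y$ and a first-order (Neumann/oblique) condition on its tangential components, which verifies the Lopatinski--Shapiro complementing condition for $\Delta_{g_{sc},conf}$; moreover $\delta\in(-1,-\tfrac12]$ is a non-exceptional weight for a Laplace-type operator on $\R^3$ in the sense of \cite{Bartnik1}. One thereby obtains the a priori estimate $\norm{Y}_{\mathcal{X}^{2}_{\delta}}\le C\bigl(\norm{\Delta_{g_{sc},conf}Y}_{\mathcal{X}^{0}_{\delta-2}}+\norm{Y}_{\mathcal{X}^{0}_{\delta'}}\bigr)$ for any $\delta'<\delta$, which together with the weighted Rellich theorem of \cite{Bartnik1} (the compact embedding $\mathcal{X}^{2}_{\delta}\hookrightarrow\mathcal{X}^{0}_{\delta'}$) shows that the operator has finite-dimensional kernel and closed range. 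Index $0$ follows from the formal self-adjointness of this boundary value problem: the bilinear boundary term $\int_{\partial M}\bigl[\conflie{g_{sc}}Z(W,\dd{r})-\conflie{g_{sc}}W(Z,\dd{r})\bigr]d\sigma$ occurring in Green's identity vanishes for all $Z,W\in\widehat{\mathcal{X}}^{2}_{\delta}(M)$ — indeed $\conflie{g_{sc}}Z(W,\dd{r})=0$ individually, by the same manipulation as in (ii) (using $g_{sc}(W,\dd{r})=0$, that $[Z,W]$ is tangent to $\partial M$, and $\left(\mathcal{L}_{\dd{r}}Z\right)^T=0$). Combining (i) and (ii) yields the asserted isomorphism. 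The main technical obstacle is step (i): checking carefully that these mixed boundary conditions — precisely the ones making the energy identity of (ii) vanish on $\partial M$ — define a regular elliptic boundary value problem, so that the weighted Fredholm machinery of \cite{maxwell} applies; step (ii) is essentially the integration-by-parts computation already performed for $\bar X$ in the main text.
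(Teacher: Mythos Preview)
Your step (ii) is essentially the paper's argument (the paper uses a cutoff $\phi_R$ and sends $R\to\infty$ rather than estimating the sphere flux directly, but the content is the same).

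The gap is in step (i), specifically the index-$0$ claim. Formal self-adjointness with respect to the \emph{unweighted} $L^2$ pairing does not by itself give index $0$ on \emph{weighted} spaces: the Banach adjoint of $\Delta_{g_{sc},conf}:\widehat{\mathcal{X}}^{2}_{\delta}\to\mathcal{X}^{0}_{\delta-2}$ does not act on the same weight class. Concretely, identifying $(\mathcal{X}^{0}_{\delta-2})^*$ with $\mathcal{X}^{0}_{\delta-2}$ via the weighted Riesz map $Y\mapsto\int_M(\cdot)\,Y\,r^{-2(\delta-2)-3}dV$, an element $Y$ of the cokernel corresponds to $\bar Y:=r^{-2(\delta-2)-3}Y$ solving $\Delta_{g_{sc},conf}\bar Y=0$ with the \emph{same} mixed boundary conditions (your Green's-identity computation is exactly what is needed to see that $\bar Y$ inherits these), but $\bar Y$ lives in $\widehat{\mathcal{X}}^{2}_{-\delta-1}$, and $-\delta-1\in[-\tfrac12,0)$ is in general a \emph{different} weight from $\delta$. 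So the vanishing of the bilinear boundary term for $Z,W\in\widehat{\mathcal{X}}^{2}_{\delta}$ is not the relevant pairing.

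The paper closes this gap not by an index argument but by directly killing the cokernel: after obtaining $\bar Y\in\widehat{\mathcal{X}}^{2}_{-\delta-1}$ with $\Delta_{g_{sc},conf}\bar Y=0$, it invokes weighted elliptic regularity (\cite{choquet,Bartnik1}) to upgrade $\bar Y\in\widehat{\mathcal{X}}^{2}_{\tau}$ for every $\tau\in(-1,0)$, in particular for some $\tau\le-\tfrac12$ where the energy identity of step (ii) applies, forcing $\bar Y=0$. Your proof becomes correct if you either insert this duality-plus-regularity step, or replace the self-adjointness sentence by an index-constancy argument on $\delta\in(-1,0)$ (no indicial roots of the conformal Laplacian lie there) transporting index $0$ from the self-dual weight $\delta=-\tfrac12$.
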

\begin{proof}

\vv
Recall that if $X \in \widehat{\mathcal{X}}^{2}_{\delta} (M)$, then 
\begin{equation} \label{recallX}
g(X, \dd{r}) = 0, \quad \left( \mathcal{L}_{\dd{r}} X \right)^T = 0
\end{equation} 

on $\partial M$. In particular, we have that 
\begin{align}
\conflie{g_{sc}}X(X,\dd{r}) &= \mathcal{L}_X g (X, \dd{r}) - \frac{2}{3} \text{div} X \, g(X, \dd{r})\\
& = g(\mathcal{L}_{\dd{r}} X, X)\\
&=0
\end{align}
on $\partial M$. 

\vv
Let $X \in \widehat{\mathcal{X}}^{2}_{\delta}(M)$ satisfy $\Delta_{g_{sc},conf} X = 0$. Given $R\geq nm_0$, let $\phi_R$ be a cutoff function on $[nm_0,\infty)$ satisfying $\phi_R(x) = 1$ for $x\leq R$, $\phi_R(x) = 0$ for $x\geq R+1$, and $-2\leq \phi_R'(x) \leq 0$ for any $x\in [nm_0,\infty)$. 

We integrate by parts to get 

\begin{align}
0 &= \int_{M}  \phi_R X^{\mu} \Delta_{g_{sc},conf}  X_{\mu} dV_{g_{sc}}\\
&= -\frac{1}{2} \int_{M}  \left( \conflie{g_{sc}} \phi_R X \right)  \cdot \left( \conflie{g_{sc}} X \right) dV_{g_{sc}}  - \int_{\partial M} \conflie{g_{sc}}  X (X, \dd{r}) d\sigma_{g_{sc}(nm_0)} \\ 
&=-\frac{1}{2} \int_{M} \phi_R \left| \conflie{g_{sc}} X \right|^2 dV_{g_{sc}} -\frac{1}{2} \int_M 2 \phi_R' \, \conflie{g_{sc}}  X (X, \dd{r}) dV_{sc}
\end{align}

This implies that for any $R\geq nm_0$, 

\begin{align}
\int_{B_R \setminus B_{nm_0}} \left| \conflie{g_{sc}} X \right|^2 dV_{g_{sc}} & \leq \int_{M} \phi_R \left| \conflie{g_{sc}} X \right|^2 dV_{g_{sc}}\\
&= -\int_M 2 \phi_R' \, \conflie{g_{sc}}  X (X, \dd{r}) dV_{sc} \\
&\leq 4 \int_{B_{R+1} \setminus B_{R}}| \conflie{g_{sc}}  X (X, \dd{r})| dV_{sc}\\
&\lesssim \int_{B_{R+1} \setminus B_{R}} |\nabla X|^2 dV_{sc}
\end{align}

Since $\delta \in (-1,-\frac{1}{2})$, we have that $|\nabla X|^2 \leq |\nabla X|^2 r^{-2\delta-1}$ and so $|\nabla X|^2$ is integrable on $M$ and $\int_M |\nabla X|^2 dV_{sc} \leq \norm{\nabla X}_{0,\delta-1}^2$. We can then take the limit as $R$ goes to infinity in the above equations to deduce that 

\begin{equation} 
\int_{M} \left| \conflie{g_{sc}} X \right|^2 dV_{g_{sc}} = 0
\end{equation}
implying that $X$ is conformal Killing on $(M,g_{sc})$. Since the equation $\Delta_{g_{sc}, conf} X = 0$ is an elliptic PDE with smooth coefficients, elliptic regularity shows that $X$ is $C^{\infty}$. We can then invoke lemma \ref{confkill=0} to conclude that $X = 0$, which shows that the kernel of $\Delta_{g_{sc}, conf}$ is trivial.

\vvv

Now we show that the kernel of the adjoint is also trivial. It will then follow that $\Delta_{g_{sc},conf}$ is an isomorphism. Recall that 

 \[\Delta_{g_{sc},conf}^* : \left( \mathcal{X}^{0}_{\delta-2} (M)\right)^* \to \left( \mathcal{X}^{2}_{\delta} (M) \right)^*  \]
 
For any number $\tau \in \R$, Riesz's representation theorem allows us to identify $\left( \mathcal{X}^{0}_{\tau} (M) \right)^*$ with $ \mathcal{X}^{0}_{\tau} (M)$ via the map : 
\[J :  \mathcal{X}^{0}_{\tau} (M) \to \left( \mathcal{X}^{0}_{\tau} (M) \right)^*\]
\[ \text{For $Y \in \mathcal{X}^{0}_{\tau} (M)$}, \,\, J(Y): X \in \mathcal{X}^{0}_{\tau} (M) \mapsto \int_M X\cdot Y r^{-2\tau-3} dV_{g_{sc}} \]
where $\cdot$ is with respect to $g_{sc}$. For simplicity of the notation, we will denote both $Y$ and $J(Y)$ by $Y$; it will be clear from context which one we are referring to. 
 
 \vv
 
 We then have that $Y \in \left( \mathcal{X}^{0}_{\delta-2} (M)\right)^*$ is in the kernel of $\Delta_{g_{sc},conf}^*$ if and only if 
 
 \begin{equation}
 \int_{M} Y\cdot  \Delta_{g_{sc},conf} X \,\,r^{-2(\delta-2)-3} dV_{g_{sc}} = 0
 \end{equation}
for every $X \in \widehat{\mathcal{X}}^{2}_{\delta}(M)$, which is equivalent to the above equation holding for every smooth compactly supported vector field $X$ in $\widehat{\mathcal{X}}^{2}_{\delta}(M)$ by a density argument. 

\vv

It follows from elliptic regularity that $Y \in \mathcal{X}^{2}_{\delta-2}(M)$. In fact, $Y$ will be smooth since the metric $g_{sc}$ is smooth. Given an arbitrary smooth compactly supported vector field $X$ in $\widehat{\mathcal{X}}^{2}_{\delta}(M)$, we can then integrate by parts to get
\vv

 \begin{align} 
 \int_M Y\cdot  \Delta_{g_{sc}, conf} X \,\,  r^{-2(\delta-2)-3} dV_{sc} &= \int_M X\cdot  \Delta_{g_{sc}, conf} \bar Y dV_{sc} \\
 & \qquad + \int_{\partial M}  \left( \mathcal{L}_{g_{sc}, conf} \bar Y (X,\dd{r}) - \mathcal{L}_{g_{sc}, conf} X (\bar Y,\dd{r}) \right) d\sigma\\
 &= \int_M X\cdot  \Delta_{g_{sc}, conf} \bar Y dV_{sc} \\
 & \qquad  \int_{\partial M} \left[  X^{i} ( \mathcal{L}_{\dd{r}} \bar Y_{i} - \partial_i \bar Y_0 ) + \frac{5}{3} \partial_r X_0 \bar Y_0 + \frac{2}{3} \cancel{div} (X^T) \bar Y_0 \right] d\sigma  
 \end{align}
 where $\bar Y := r^{-2(\delta-2) -3} Y$, $X_0 := g_{sc}(\dd{r}, X)$ and $\bar Y_0 := g_{sc}(\dd{r}, \bar Y)$. Note that the boundary terms vanish at infinity since $X$ is compactly supported. Since $X$ was an arbitrary vector field in a dense subset of $\widehat{\mathcal{X}}^{2}_{\delta}(M)$, it follows that $\bar Y$ satisfies

\begin{equation}
\Delta_{g_{sc}, conf} \bar Y = 0, \quad g(\dd{r}, \bar Y) = 0, \quad \left( \mathcal{L}_{\dd{r}} \bar Y \right)^T = 0
\end{equation}

Considering that $Y \in \mathcal{X}^{0}_{\delta-2} (M) $, it follows that $\bar Y \in \widehat{\mathcal{X}}^{2}_{-\delta-1} (M) $. Since $\Delta_{{g_{sc}}, conf} \bar Y = 0$ and $-1< -\delta-1< 0$, we have that $Y \in \widehat{\mathcal{X}}^{2}_{\tau} (M)$ for any $\tau\in (-1,0)$ (see \cite{choquet} and \cite{Bartnik1}). We can then apply the same integration-by-parts argument carried out earlier to conclude that $\bar Y$ is conformal Killing on $(M,g_{sc})$ and hence, by lemma \eqref{confkill=0}, vanishes as needed. 
 

\end{proof}

\appendix 
\section{Appendix}

\subsection{A PDE of Finite Type for Conformal Killing Vector Fields}\label{appendix CK}
In this section, we will prove an identity satisfied by conformal Killing vector fields that is used in the proof of lemma \ref{confkill=0}. More specifically, we will prove that any conformal Killing vector field $Z$ on an arbitrary $n$-dimensional Riemannian manifold $(M,g)$ satisfies the following PDE of finite type: 

\begin{equation} \label{eqforZappendix}
\nabla^3 Z = A\cdot \nabla Z + B \cdot Z
\end{equation}

where $A$ and $B$ are linear expressions in $Riem$ and $\nabla Riem$.

\vv

Let $Z$ be a conformal Killing field and let $\psi := \frac{2}{n} \text{div} Z$. The conformal Killing equation is

\begin{equation}
\nabla_i Z_j + \nabla_j Z_i =  \psi g_{ij}
\end{equation}

Eisenhart in \cite{eisenhart} (see pages 231-232 in \cite{eisenhart}) proves the following identities:

\begin{equation} \label{eq1Z}
\nabla_k\nabla_jZ_i = -Z_m R^m_{kij} + \frac{1}{2} (g_{ij} \nabla_k \psi + g_{ik} \nabla_j \psi - g_{jk} \nabla_i \psi )
\end{equation} 

\begin{equation} \label{eq2Z}
g^{il} Z_m \nabla_l R^m_{kij} - Z_m \nabla_k R^m_j - \nabla_k Z_mR^m_j - \nabla_j Z_m R^m_k + \frac{n-2}{2} \nabla_k \nabla_j \psi + \frac{1}{2} g_{jk} \Delta \psi =0
\end{equation}

where $R$ denotes $Riem$ or $Ric$ depending on the number of indices. Taking the trace of equation \eqref{eq2Z}, we get 
\begin{equation} \label{eq3Z}
\Delta \psi = \frac{2}{n-1} \left( Z_m \nabla_i R^{mi} + \nabla_i Z_m R^{mi} \right)
\end{equation}

Using equation \eqref{eq3Z} to eliminate $\Delta \psi$ in equation \eqref{eq2Z}, we get the following expression of $\nabla_k \nabla_j \psi$: 
\begin{equation} \label{eq4Z}
\nabla_k \nabla_j \psi = \frac{2}{n-2} \bigg( -g^{il} Z_m \nabla_l R^m_{kij} + Z_m \nabla_k R^m_j + \nabla_k Z_mR^m_j + \nabla_j Z_m R^m_k -  \frac{1}{n-1} g_{jk} \left( Z_m \nabla_i R^{mi} + \nabla_i Z_m R^{mi} \right)\bigg)
\end{equation}

Taking a derivative of equation \eqref{eq1Z} and using equation \eqref{eq4Z}, we get the desired result. 

\subsection{A Hardy-type Inequality} \label{hardy appendix}

In this section, we will prove a Hardy-type inequality that is used in the proof of lemma \ref{confkill=0}, namely equation \eqref{hardy}. 

\vv

Let $g \in \mathcal{M}^k_{\delta}(M)$ be a Riemannian metric on $M = [nm_0,\infty) \times S^2$ of the form 

\[g = dr^2+ r^2(\gamma_{\infty} + h(r))\] 
where $\gamma_{\infty}$ is a metric on $S^2$ and $h \in \HtoHH{2}{\delta}{k}$. We will prove the following. 

\begin{prop} \label{hardy prop}  There exists an $R_0>nm_0$ depending only on $g$ such that for $R\geq R_0$ and $\tau>0$, the inequality
\begin{equation}
\int_{[R,\infty)\times S^2}  r^{\tau-2} |T|^2   \, dV_g \leq \frac{4}{\tau^2} \int_{[R,\infty)\times S^2} r^{\tau} |\nabla T|^2 dV_g
\end{equation}
holds for all tensor fields $T \in C^1_c(M)$. 
\end{prop}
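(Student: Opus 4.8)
The plan is to reduce the inequality to a one–dimensional weighted Hardy estimate along the rays $r\mapsto(r,\omega)$ and to treat the $r$–dependence of the fibre metric as a lower order perturbation. Writing $g=dr^2+r^2(\gamma_{\infty}+h(r))$, the Riemannian volume element factors as $dV_g=r^2\rho(r)\,dr\,d\sigma_{\gamma_{\infty}}$, where $d\sigma_{\gamma_{\infty}}$ is the area measure of $(S^2,\gamma_{\infty})$ and $\rho(r,\cdot)>0$ is the density of the metric $\gamma_{\infty}+h(r)$ relative to $\gamma_{\infty}$ on $S^2$. Consequently the left–hand side of the proposition equals $\int_{S^2}\int_R^\infty r^{\tau}\rho\,|T|^2\,dr\,d\sigma_{\gamma_{\infty}}$ and the right–hand side equals $\tfrac{4}{\tau^2}\int_{S^2}\int_R^\infty r^{\tau+2}\rho\,|\nabla T|^2\,dr\,d\sigma_{\gamma_{\infty}}$, so by Fubini it suffices to prove, for each fixed $\omega\in S^2$ and for a tensor field $T$ of any type (with $\langle\cdot,\cdot\rangle$ and $|\cdot|$ in $g$),
\[
\int_R^\infty r^{\tau}\rho\,|T|^2\,dr\;\le\;\frac{4}{\tau^2}\int_R^\infty r^{\tau+2}\rho\,|\nabla T|^2\,dr .
\]

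For the one–dimensional estimate I would integrate the identity $\tfrac{d}{dr}\!\left(r^{\tau+1}\rho\,|T|^2\right)=(\tau+1)r^{\tau}\rho|T|^2+r^{\tau+1}(\partial_r\rho)|T|^2+2r^{\tau+1}\rho\,\langle\nabla_{\partial_r}T,T\rangle$ over $[R,\infty)$; the boundary term at $r=\infty$ vanishes because $T$ is compactly supported, and the boundary term at $r=R$, namely $-R^{\tau+1}\rho(R,\cdot)|T(R,\cdot)|^2$, has a favourable sign and is discarded. This yields
\[
(\tau+1)\int_R^\infty r^{\tau}\rho|T|^2\,dr\;\le\;\Big|\int_R^\infty r^{\tau+1}(\partial_r\rho)|T|^2\,dr\Big|+2\int_R^\infty r^{\tau+1}\rho\,|\nabla_{\partial_r}T|\,|T|\,dr .
\]
For the last term I use $|\nabla_{\partial_r}T|\le|\nabla T|$ (valid since $\partial_r$ is $g$–unit and normal to the leaves) together with Cauchy–Schwarz in the measure $r^{\tau}\rho\,dr$, which bounds it by $2\big(\int r^{\tau}\rho|T|^2\big)^{1/2}\big(\int r^{\tau+2}\rho|\nabla T|^2\big)^{1/2}$. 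Provided the first term on the right can be absorbed, say $\big|\int r^{\tau+1}(\partial_r\rho)|T|^2\big|\le\tfrac12\int r^{\tau}\rho|T|^2$, one gets $(\tau+\tfrac12)\int r^{\tau}\rho|T|^2\le 2\big(\int r^{\tau}\rho|T|^2\big)^{1/2}\big(\int r^{\tau+2}\rho|\nabla T|^2\big)^{1/2}$, hence $\int r^{\tau}\rho|T|^2\le\tfrac{4}{(\tau+1/2)^2}\int r^{\tau+2}\rho|\nabla T|^2\le\tfrac{4}{\tau^2}\int r^{\tau+2}\rho|\nabla T|^2$, which after integrating in $\omega$ is the claim. (A minor technicality in the integration by parts — the $C^1$–dependence of $|T|^2$ on $r$ — is settled by noting that $g$ is $C^1$ in $r$, or by working with $|T|^2+\varepsilon^2$ and letting $\varepsilon\to0$.)

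The only genuinely delicate point, and the one I expect to be the main obstacle, is justifying the absorption \emph{uniformly in $\tau$}: one needs $r\,|\partial_r\rho|/\rho\le\tfrac12$ on $[R_0,\infty)\times S^2$ for an $R_0$ depending only on $g$. Since $h\in H^2_{\delta}\!\left([nm_0,\infty);\mathcal{H}^k(S^2)\right)$ with $\delta<0$, proposition \ref{prop-spaces}(a) gives $\sup_{S^2}|h(r,\cdot)|=o(r^{\delta})\to0$ and $\sup_{S^2}|\partial_r h(r,\cdot)|=o(r^{\delta-1})$ as $r\to\infty$; because $\rho$ is a smooth function of the fibre metric with $\rho|_{h=0}=1$, this forces $\rho(r,\cdot)\to1$ and $|\partial_r\rho(r,\cdot)|\le C\,|\partial_r h(r,\cdot)|=o(r^{\delta-1})$ uniformly on $S^2$, so $r\,|\partial_r\rho|/\rho=o(r^{\delta})\to0$ uniformly, and $R_0$ can be chosen accordingly. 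What makes this work is precisely that the $\partial_r\rho$ term is absorbed into the main term: a naive comparison of $\rho$ with its limit $1$ would only produce the constant $4/(\tau+1)^2$ multiplied by $\sup\rho/\inf\rho$, and since $(\tau+1)^2/\tau^2\to1$ as $\tau\to\infty$ this factor could not be controlled by an $R_0$ independent of $\tau$; the absorption argument keeps $R_0$ uniform in $\tau$.
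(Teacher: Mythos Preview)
Your argument is correct and self-contained; it differs from the paper's proof mainly in that you derive the Hardy inequality from scratch by integrating $\tfrac{d}{dr}\big(r^{\tau+1}\rho\,|T|^2\big)$ along each ray, whereas the paper invokes a general weighted Hardy inequality on Riemannian manifolds due to D'Ambrosio--Dipierro with the choice $\rho=r^{\tau}$, computes $\Delta_g r^{\tau}=\tau r^{\tau-2}\big[\tau+1+r\,(trK-\tfrac{2}{r})\big]$, and then passes from scalars to tensors via the Kato inequality $|\nabla|T||\le|\nabla T|$. The two approaches are essentially the same computation in different packaging: your density condition $r\,|\partial_r\rho|/\rho\le\tfrac12$ is exactly the paper's condition $|r(trK-\tfrac{2}{r})|\le 1$ (since $trK-\tfrac{2}{r}=\partial_r\log\rho$), and both are obtained from the same decay $trK-\tfrac{2}{r}=o(r^{\delta-1})$ furnished by proposition~\ref{prop-spaces}. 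Your direct route has the advantage of being elementary and of making transparent why the constant $4/\tau^2$ (rather than $4/(\tau+1)^2$ times a metric-dependent factor) is uniform in $\tau$; the paper's route is shorter once one accepts the cited theorem. One small cosmetic point: your remark about the $C^1$-regularity of $|T|^2$ and the $\sqrt{|T|^2+\varepsilon^2}$ trick is unnecessary here, since you differentiate $|T|^2$ (not $|T|$) and $\partial_r|T|^2=2\langle\nabla_{\partial_r}T,T\rangle$ holds directly for $T\in C^1$ and $g\in C^1$.
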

\begin{proof}
The main tool we will use is a general $L^p$ Hardy inequality in Riemannian manifolds developed by D'Ambrosio and Dipierro in \cite{hardy}. We present the relevant version of it below. 

\begin{thm} \label{hardy ref}
Let $\rho \in C^2(M)$ such that $\Delta_g \rho \geq 0$ such that $\frac{|\nabla \rho|^2}{\Delta \rho} \in L^1_{loc}(M)$. Then for any $R> nm_0$, the inequality 
\begin{equation}
\int_{[R,\infty)\times S^2} |u|^2 \Delta \rho  \, dV_g \leq 4 \int_{[R,\infty)\times S^2} \frac{|\nabla \rho|^2}{\Delta \rho} |\nabla u|^2 dV_g
\end{equation}
holds for all $u \in C^1_c (M)$. 
\end{thm}

Letting $\rho = r^{\tau}$ for $\tau>0$, we compute 

\begin{equation}
\Delta \rho = \tau r^{\tau-2} \left[ \tau+1+ r \left(trK - \frac{2}{r}\right) \right], \quad |\nabla \rho|^2 = \tau^2 r^{2\tau - 2} 
\end{equation}

In light of the Sobolev embeddings in proposition \ref{prop-spaces}, we have that 
\begin{equation}
|trK -\frac{2}{r}| = o(r^{-1+\delta})
\end{equation}

So we can choose $R_0 > nm_0$ depending only on $g$ such that for any $R\geq R_0$,

\begin{equation}
\left| r \left(trK - \frac{2}{r}\right)\right| \leq 1
\end{equation} 
This, in turn, implies that for any $\tau>0$ and any $R\geq R_0$, 
\begin{equation} \Delta \rho \geq \tau^2 r^{\tau-2} \geq0\end{equation}
on $[R,\infty)\times S^2$ . 

\vv
We can then invoke theorem \ref{hardy ref} directly to deduce that the inequality 
\begin{equation} \label{hardy 1}
\int_{[R,\infty)\times S^2}  r^{\tau-2} |u|^2   \, dV_g \leq \frac{4}{\tau^2} \int_{[R,\infty)\times S^2} r^{\tau} |\nabla u|^2 dV_g
\end{equation}
hold for any $R\geq R_0$ and $u \in C^1_c(M)$.

The same inequality holds with $u$ replaced with a tensor field $T$. To see this, we first compute that for any tensor field $T$ on $M$, 
\begin{equation} \label{hardy 2}
|\nabla |T| |^2 \leq |\nabla T|^2
\end{equation}
Using the above and letting $u =|T|$  in equation \eqref{hardy 1}, we immediately deduce that the inequality 
\begin{equation} \label{hardy 2}
\int_{[R,\infty)\times S^2}  r^{\tau-2} |T|^2   \, dV_g \leq \frac{4}{\tau^2} \int_{[R,\infty)\times S^2} r^{\tau} |\nabla T|^2 dV_g
\end{equation}
holds for any $R\geq R_0$ and tensor field $T \in C^1_c(M)$.
\end{proof}
\vv

\begin{cor} \label{main hardy prop}
Let $\delta \in (-1,-\frac{1}{2})$. There exists an $R_0>nm_0$ depending only on $g$ and a positive constant $C$ depending only on $\delta$ such that for any $R\geq R_0$ and any vector field $Z \in \mathcal{X}^3_{\delta}(M)$, 
\begin{equation} \label{hardy in appendix}
\int_{[R,\infty)\times S^2} r^{-2\delta-3} (|Z|^2+ r^2|\nabla Z|^2) dV_g \leq C \int_{[R,\infty)\times S^2} r^{-2(\delta-3)-3} |\nabla^3 Z|^2 dV_g 
\end{equation}
\end{cor}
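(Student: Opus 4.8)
The plan is to derive \eqref{hardy in appendix} by applying Proposition \ref{hardy prop} three times in succession --- to $\nabla^2 Z$, then to $\nabla Z$, then to $Z$ --- choosing the weight parameter $\tau$ at each stage so that the right-hand side produced by one application coincides with the left-hand side fed into the next; the radius $R_0$ will be the one furnished by Proposition \ref{hardy prop}, which depends only on $g$. Before doing this I would reduce to the case of $Z$ smooth with compact support (in which case $\nabla^2 Z,\nabla Z,Z\in C^1_c(M)$, using the $C^3$ regularity of $g$ available in the intended application by Proposition \ref{regprop}). This reduction is legitimate because, since $Z\in\mathcal{X}^3_\delta(M)$, all the weighted integrals occurring in \eqref{hardy in appendix} are finite: writing $\nabla$ for the $g$-connection and comparing it with the Euclidean connection $D$ of Definition \ref{def of Xkdelta} (which is harmless, as $g$ differs from a flat metric by terms decaying together with their derivatives), the quantities $\int_M r^{-2\delta-3}|Z|^2\,dV_g$, $\int_M r^{-2\delta-1}|\nabla Z|^2\,dV_g$ and $\int_M r^{-2(\delta-3)-3}|\nabla^3 Z|^2\,dV_g$ are each bounded by a constant multiple of $\norm{Z}_{\mathcal{X}^3_\delta}^2$; hence both sides of \eqref{hardy in appendix} depend continuously on $Z$ in the $\mathcal{X}^3_\delta$-topology, and smooth compactly supported fields are dense (mollification smooths the field, no boundary condition being imposed near $\partial M$, and a cutoff at infinity truncates it).

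The only point that I expect to require genuine care is the truncation at infinity, which must be carried out on a dyadic scale: one takes $\phi_N\equiv 1$ on $[nm_0,N]$, $\phi_N\equiv 0$ on $[2N,\infty)$ with $|\phi_N'|\le 2/N$. When Proposition \ref{hardy prop} is applied with weight $\tau$ to $\nabla^j Z$, the cutoff contributes an error term of the form $\int_{[N,2N]\times S^2} r^{\tau}|\phi_N'|^2|\nabla^j Z|^2\,dV_g$; relative to the natural weight $r^{\tau-2}$ this carries the factor $N^{-2}r^{2}\le 4$ on $[N,2N]$, so it is dominated by a vanishing tail of a convergent integral. A cutoff of fixed width would instead leave an uncontrolled factor $r^{2}$, which is precisely why the dyadic choice is essential.

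It then remains to run the three applications. With $Z$ smooth and compactly supported I would apply Proposition \ref{hardy prop} first with $T=\nabla^2 Z$ and $\tau=-2\delta+3$, then with $T=\nabla Z$ and $\tau=-2\delta+1$, then with $T=Z$ and $\tau=-2\delta-1$; the first two choices satisfy $\tau>0$ because $\delta<0$, while the third requires $-2\delta-1>0$, i.e.\ $\delta<-\tfrac12$ --- this is exactly where the hypothesis $\delta\in(-1,-\tfrac12)$ is used, the endpoint $\delta=-\tfrac12$ being genuinely excluded. Chaining the three resulting inequalities over $[R,\infty)\times S^2$ for any $R\ge R_0$ yields
\begin{align*}
\int_{[R,\infty)\times S^2} r^{-2\delta-3}|Z|^2\,dV_g
&\le \frac{4}{(-2\delta-1)^2}\int_{[R,\infty)\times S^2} r^{-2\delta-1}|\nabla Z|^2\,dV_g\\
&\le \frac{16}{(-2\delta-1)^2(-2\delta+1)^2}\int_{[R,\infty)\times S^2} r^{-2\delta+1}|\nabla^2 Z|^2\,dV_g\\
&\le \frac{64}{(-2\delta-1)^2(-2\delta+1)^2(-2\delta+3)^2}\int_{[R,\infty)\times S^2} r^{-2\delta+3}|\nabla^3 Z|^2\,dV_g,
\end{align*}
while applying only the last two steps bounds $\int_{[R,\infty)\times S^2} r^{-2\delta-1}|\nabla Z|^2\,dV_g$ by $\tfrac{16}{(-2\delta+1)^2(-2\delta+3)^2}\int_{[R,\infty)\times S^2} r^{-2\delta+3}|\nabla^3 Z|^2\,dV_g$. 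Adding the two estimates and using $r^{-2\delta-1}=r^{2}r^{-2\delta-3}$ together with $-2\delta+3=-2(\delta-3)-3$ produces \eqref{hardy in appendix} with a constant $C$ depending only on $\delta$, and the general case $Z\in\mathcal{X}^3_\delta(M)$ follows from the density reduction above.
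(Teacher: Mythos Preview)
Your proposal is correct and follows essentially the same approach as the paper: apply Proposition~\ref{hardy prop} three times with $T=Z,\nabla Z,\nabla^2 Z$ and $\tau=-2\delta-1,-2\delta+1,-2\delta+3$, chain the resulting inequalities, and pass from compactly supported fields to $\mathcal{X}^3_\delta(M)$ by density. Your discussion of the dyadic cutoff and the role of the hypothesis $\delta<-\tfrac12$ is more explicit than the paper's, which simply invokes density without further comment.
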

\begin{proof}
By repeatedly applying proposition \ref{hardy prop} for $\tau = -2\delta-1,-2\delta+1, -2\delta+3$ and $T = Z, \nabla Z, \nabla^2 Z$, we deduce that there exists a positive constant $C = C(\delta)$ such that 
\begin{equation}
\int_{[R,\infty)\times S^2} r^{-2\delta-3} (|Z|^2+ r^2|\nabla Z|^2) dV_g \leq C \int_{[R,\infty)\times S^2} r^{-2(\delta-3)-3} |\nabla^3 Z|^2 dV_g 
\end{equation}
for any $R\geq R_0$ and any vector field $Z \in C^3_c(M)$. The above inequality can be rewritten in terms of the norm on $\mathcal{X}^k_{\delta}(M)$ (see definition \ref{def of Xkdelta}) as follows: 

\begin{equation}
\norm{Z}_{1,\delta}^2 \leq C \norm{\nabla^3 Z}_{0,\delta-3}^2
\end{equation}

The desired inequality then follows from the density of $C^3_c(M)$ in $\mathcal{X}^3_{\delta}(M)$. 
\end{proof}

\subsection{The Legendre functions $P_{\ell}$ and $Q_{\ell}$} \label{legendre appendix}

In this section, we will prove properties of the Legendre functions of the first and second kind, $P_{\ell}$ and $Q_{\ell}$, that are used in the proof of lemma \ref{main-est}. 

\vv
Fix $R>1$ (which is equal to $n-1$ in section $3$). For a positive integer $\ell$, as described by Olver in \cite{olver}, the Legendre functions, $P_{\ell}$ and $Q_{\ell}$, are linearly independent solutions to the ODE

\begin{equation}
(z^2-1) h''(z) + 2z h'(z)-\ell(\ell+1)h(z) = 0, \quad z\in [R,\infty)
\end{equation}

with the following asymptotics as $z\to \infty$,
\begin{equation}
P_{\ell}(z) = O(z^{\ell}), \quad Q_{\ell}(z) = O(z^{-\ell-1})
\end{equation}

We normalize $P_{\ell}$ and $Q_{\ell}$ so that 

\begin{equation}
\lim_{z\to \infty} z^{-\ell} P_{\ell}(z) = 1, \quad \lim_{z\to \infty} z^{\ell+1} Q_{\ell}(z) = 1
\end{equation}
which is different than Olver's. Letting ${\bf P}_{\ell}$ and ${\bf Q}_{\ell}$ be the Legendre functions as defined by Olver, the relation between ours and his can immediately be obtained is as follows (see \cite{olver} chapter 5 section 12 and 13 ):

\begin{equation}
P_{\ell}(z) = \frac{\sqrt{\pi} \Gamma(\ell+1)}{2^{\ell}\Gamma(\ell+\frac{1}{2})} {\bf P}_{\ell}(z), \quad Q_{\ell}(z) = \frac{2^{\ell+1}\Gamma(\ell+\frac{3}{2})}{\sqrt{\pi} \Gamma(\ell+1)} {\bf Q}_{\ell}(z)
\end{equation}

In the following proposition, we will apply the method of Frobenius to obtain the expansion of $P_{\ell}$ and $Q_{\ell}$ in terms of powers of $z$. 

\begin{prop} \label{frobenius}
$P_{\ell}$ and $Q_{\ell}$ admit an expansion of the following form. For $z>1$, 
\begin{equation}
P_{\ell}(z) = \sum_{k=0}^\ell a_k z^{\ell-k}, \quad Q_{\ell}(z) = \sum_{k=0}^{\infty} b_k z^{-\ell-1-k}
\end{equation}

where the coefficients $a_k$ and $b_k$ are defined recursively as follows:

\[ a_0 = b_0 = 1, \quad a_1 = b_1 = 0\]
\[\text{for $k\geq 2$}, \quad a_{k} = \frac{(\ell-k+2)(\ell-k+1)}{k^2-k(2\ell+1)}a_{k-2}, \quad b_{k} = \frac{(\ell+k-1)(\ell+k)}{k(2\ell+k+1)}b_{k-2}  \]
\end{prop}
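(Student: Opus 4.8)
The plan is to run the Frobenius method at the point $z=\infty$, which is a regular singular point of the Legendre equation $(z^2-1)h''+2zh'-\ell(\ell+1)h=0$, and then to identify the two resulting series with $P_\ell$ and $Q_\ell$ using the normalizations and asymptotics recorded above. First I would substitute the ansatz $h(z)=\sum_{k\ge 0}c_k z^{\sigma-k}$ with $c_0\neq 0$, differentiate termwise (legitimate once convergence is in hand), and collect the coefficient of $z^{\sigma-k}$. Using the elementary identity $(\sigma-k)(\sigma-k+1)-\ell(\ell+1)=(\sigma-k-\ell)(\sigma-k+\ell+1)$, the $k=0$ term gives the indicial equation $(\sigma-\ell)(\sigma+\ell+1)=0$, forcing $\sigma\in\{\ell,-\ell-1\}$, and for $k\ge 1$ one gets the two-term recurrence
\begin{equation*}
c_k\,(\sigma-k-\ell)(\sigma-k+\ell+1)=c_{k-2}\,(\sigma-k+1)(\sigma-k+2),\qquad c_{-1}:=0 .
\end{equation*}

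For the root $\sigma=\ell$ this becomes $a_k\,[k^2-k(2\ell+1)]=a_{k-2}(\ell-k+1)(\ell-k+2)$. At $k=1$ the bracket is $-2\ell\neq 0$, so $a_1=0$ and all odd-indexed coefficients vanish; the numerator vanishes at $k=\ell+1$, so the series terminates and produces a polynomial of degree $\ell$ with leading coefficient $a_0=1$. Since, up to scaling, the Legendre polynomial is the unique polynomial solution of the Legendre equation and the leading coefficient fixes the scaling, this polynomial is exactly $P_\ell$, yielding the stated finite expansion and recurrence. For the root $\sigma=-\ell-1$ the recurrence reads $b_k\,k(2\ell+k+1)=b_{k-2}(\ell+k-1)(\ell+k)$; again $b_1=0$ (all odd coefficients vanish), but the numerator never vanishes for $k\ge 2$, $\ell\ge 1$, so the series is genuinely infinite. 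Here I would pause to observe that although the two indicial exponents differ by the integer $2\ell+1$ — a situation that in general permits a logarithmic term in the second solution — the would-be resonance index solves $\sigma-k=\ell$, i.e. $k=-2\ell-1<0$, so no logarithm appears and the pure power-series ansatz is consistent, with all $b_k$ determined uniquely from $b_0=1$.

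It then remains to verify convergence and to pin down the identification. Since $b_k/b_{k-2}\to 1$ as $k\to\infty$, the ratio of successive same-parity terms of $\sum_k b_k z^{-\ell-1-k}$ tends to $z^{-2}<1$ for $z>1$, so the series converges absolutely and locally uniformly on $(1,\infty)$; this in particular justifies the termwise differentiation used above, so the series is a genuine solution of the ODE on $(1,\infty)$. Being $O(z^{-\ell-1})$, solving the equation, and satisfying $\lim_{z\to\infty}z^{\ell+1}(\cdot)=b_0=1$, it must coincide with $Q_\ell$ (any admixture of $P_\ell$ would contribute a $z^{\ell}$ term). The argument is essentially routine; the only points needing a little care are the bookkeeping in deriving the recurrence (streamlined by the factoring identity), the remark that no logarithm enters the $Q_\ell$ branch despite the integer gap between the indicial roots, and the convergence/uniqueness step that matches the formal series with the named functions — none of which presents a real obstacle.
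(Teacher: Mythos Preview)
Your argument is correct and is exactly the Frobenius-at-infinity approach the paper has in mind: the paper merely announces the method and cites Watson for the resulting expansions, whereas you have carried out the substitution, derived the indicial equation and two-term recurrence, checked termination for the $P_\ell$ branch and convergence for the $Q_\ell$ branch, and used the stated normalizations to identify the series with $P_\ell$ and $Q_\ell$. Your remark that no logarithm enters the $Q_\ell$ branch (because the coefficient $k(2\ell+k+1)$ in the recurrence never vanishes for $k\ge 1$) is the one point that goes beyond what the paper records, and it is handled correctly.
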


The expansion of $P_{\ell}$ and $Q_{\ell}$ as described above agree with \cite{watson} pg 302 and 320. 

\begin{prop} \label{prop-estPQ}
There exists a constant $C = C(R)$ such that for any $\ell\geq 1$ and $z\in [R,\infty)$, the following holds 

\begin{equation} \label{estPQa}
z^{-\ell}|P_{\ell}(z)| \leq  C \left(\frac{2z}{z+\sqrt{z^2-1}} \right)^{-\ell},\qquad z^{\ell+1} |Q_{\ell}(z)| \leq C\left(\frac{2z}{z+\sqrt{z^2-1}} \right)^{\ell}
\end{equation}
\begin{equation}\label{estPQ'a}
z^{-(\ell-1)}|P'_{\ell}(z)| \leq C \ell \left(\frac{2z}{z+\sqrt{z^2-1}} \right)^{-\ell}, \qquad z^{\ell+2} |Q'_{\ell}(z)| \leq C \ell \left(\frac{2z}{z+\sqrt{z^2-1}} \right)^{\ell}
\end{equation}
\end{prop}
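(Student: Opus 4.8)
The plan is to prove the four inequalities from the classical Laplace-type integral representations of the Legendre functions, written in a form that makes the normalization at $z=\infty$ fixed in the text explicit. For $z>1$ one has
\[
P_\ell(z)=\frac{\int_0^\pi\big(z+\sqrt{z^2-1}\cos\phi\big)^{\ell}\,d\phi}{\int_0^\pi(1+\cos\phi)^{\ell}\,d\phi},\qquad
Q_\ell(z)=\frac{\int_0^\infty\big(z+\sqrt{z^2-1}\cosh s\big)^{-\ell-1}\,ds}{\int_0^\infty(1+\cosh s)^{-\ell-1}\,ds}.
\]
In each case the numerator solves the Legendre equation in $z$ and has the correct growth ($\sim z^{\ell}$, resp. $\sim z^{-\ell-1}$) as $z\to\infty$, the denominator is the corresponding leading coefficient ($\pi\binom{2\ell}{\ell}2^{-\ell}$, resp. $\sqrt\pi\,\Gamma(\ell+1)/(2^{\ell+1}\Gamma(\ell+\tfrac32))$), and both integrals are strictly positive for $z>1$; so these identities follow from the uniqueness of $P_\ell,Q_\ell$ as normalized in the text together with Laplace's integrals for the standard Legendre functions. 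In particular $P_\ell,Q_\ell>0$ on $[R,\infty)$, which also follows from the Frobenius expansions in Proposition \ref{frobenius}.

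First I would prove \eqref{estPQa} for $P_\ell$ itself. Set $\lambda_z:=\sqrt{z^2-1}/(z+\sqrt{z^2-1})$, which is increasing in $z$, hence $\lambda_z\ge\lambda_R>0$ on $[R,\infty)$. Then $z+\sqrt{z^2-1}\cos\phi=(z+\sqrt{z^2-1})\big(1-\lambda_z(1-\cos\phi)\big)$ with the bracket in $(0,1]$, so the numerator is at most $(z+\sqrt{z^2-1})^\ell\int_0^\pi\exp\!\big(-\ell\lambda_z(1-\cos\phi)\big)d\phi$; using $1-\cos\phi\ge\tfrac{2}{\pi^2}\phi^2$ and extending to $[0,\infty)$ gives the Gaussian bound $\le(z+\sqrt{z^2-1})^\ell\, C(R)/\sqrt\ell$. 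The denominator equals $\pi\binom{2\ell}{\ell}2^{-\ell}\ge c\,2^\ell/\sqrt\ell$ by the standard lower bound on the central binomial coefficient. Dividing, the two powers of $\sqrt\ell$ cancel and $P_\ell(z)\le C(R)\big((z+\sqrt{z^2-1})/2\big)^\ell$, which is \eqref{estPQa} after dividing by $z^\ell$. The bound for $Q_\ell$ is obtained in exactly the same way: on the numerator use $z+\sqrt{z^2-1}\cosh s=(z+\sqrt{z^2-1})\big(1+\lambda_z(\cosh s-1)\big)\ge(z+\sqrt{z^2-1})(1+\lambda_z s^2/2)$ and $\int_0^\infty(1+as^2)^{-\ell-1}ds=a^{-1/2}\tfrac{\sqrt\pi\,\Gamma(\ell+1/2)}{2\,\Gamma(\ell+1)}\le C(R)/\sqrt\ell$ by Stirling, while the denominator is $\ge c\,2^{-\ell}/\sqrt\ell$; dividing yields $Q_\ell(z)\le C(R)\,2^\ell(z+\sqrt{z^2-1})^{-\ell-1}$, and multiplying by $z^{\ell+1}$ and using $z+\sqrt{z^2-1}\ge z$ gives the second inequality in \eqref{estPQa}.

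For the derivative estimates \eqref{estPQ'a} I would not differentiate the integrals, but instead invoke the first-order contiguous relations for Legendre functions recorded in \eqref{recursive}, which express $(z^2-1)P'_\ell(z)$ as $\ell z P_\ell(z)-c_\ell P_{\ell-1}(z)$ with $0<c_\ell\le\ell$, and similarly $(z^2-1)Q'_\ell(z)$ in terms of $Q_\ell$ and $Q_{\ell-1}$. Plugging in the bounds just proved at indices $\ell$ and $\ell-1$, together with the elementary facts that on $[R,\infty)$ one has $z^2/(z^2-1)\le R^2/(R^2-1)$ and $1/(z^2-1)\le C(R)/z$, every term is bounded by $\tfrac{C(R)\,\ell}{z}\big((z+\sqrt{z^2-1})/2\big)^\ell$ for $P'_\ell$ and by $\tfrac{C(R)\,\ell}{z^2}\big(2/(z+\sqrt{z^2-1})\big)^\ell$ for $Q'_\ell$; multiplying by the relevant power of $z$ gives \eqref{estPQ'a}.

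The main obstacle is the uniformity in $\ell$: the naive pointwise bound from the integral representation (or from the Frobenius series with its coefficients replaced by their absolute values) overshoots by a factor $\sqrt\ell$, so the $\sqrt\ell$ produced by the Laplace asymptotics of the numerator must be matched exactly against the $\sqrt\ell$ coming from Stirling applied to the normalizing constant — writing $P_\ell$ and $Q_\ell$ as the ratios above is precisely what makes this cancellation transparent. It is also here that the hypothesis $z\ge R>1$ enters essentially: the Gaussian width of the peak of the integrand is controlled by $\lambda_z$, which degenerates as $z\to1^+$ but is bounded below on $[R,\infty)$; the same ``bounded away from the singularity'' principle is what renders $1/(z^2-1)$ and $z/\sqrt{z^2-1}$ harmless in the derivative step.
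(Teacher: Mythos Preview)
Your argument is correct and essentially complete, but it is a genuinely different route from the paper's own proof. The paper imports the uniform large-degree asymptotics of Olver, Shivakumar--Wong, and Frenzen, which express ${\bf P}_\ell(\cosh\xi)$ and ${\bf Q}_\ell(\cosh\xi)$ in terms of the modified Bessel functions $I_0,I_1$ and $K_0,K_1$ with explicit error bounds; it then applies Olver's exponential bounds on $I_\nu,K_\nu$, converts back via the normalization constants $\Gamma(\ell+1)/\Gamma(\ell+\tfrac12)$ and $\Gamma(\ell+\tfrac32)/\Gamma(\ell+1)$, and cancels the resulting $\sqrt\ell$ against Stirling. The derivative step in the paper is the same recurrence argument you use.

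Your Laplace-integral approach is more self-contained: the normalizing ratios you write down are exactly the paper's conversion constants in disguise, so the $\sqrt\ell$ cancellation is the \emph{same} cancellation, just obtained by an elementary Laplace/Gaussian bound on a concrete integral rather than by quoting a uniform asymptotic expansion. What you gain is that no external asymptotic machinery is needed and the role of the hypothesis $z\ge R>1$ (through $\lambda_z\ge\lambda_R>0$) is completely transparent. What the paper's route buys is that the Bessel-function expansions come with sharper error terms and would extend with no change to non-integer degrees or to more refined asymptotics, whereas your bound is tailored to give exactly the order needed here. One minor remark: the recurrence as \emph{stated} in the paper, $(z^2-1)P'_\ell=\ell(zP_\ell-P_{\ell-1})$, holds for the standard normalization; in the paper's normalization the coefficient of $P_{\ell-1}$ picks up the factor $\alpha_\ell/\alpha_{\ell-1}=\ell/(2\ell-1)$, which is precisely the $c_\ell\in(0,\ell]$ you allowed for --- so your phrasing is in fact the accurate one.
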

\begin{proof}

Olver, in \cite{olver} chapter 12 section 12, has established an asymptotic expansion of ${\bf P}_{\ell}$ and ${\bf Q}_{\ell}$ for large degree $\ell$ that is uniformly valid for $z \in (1,\infty)$. Shivakumar and Wong, in \cite{wong}, proved an equivalent expansion of ${\bf P}_{\ell}$ that is more computable; letting $u=\ell+1/2$, he has shown that for $\xi>0$,

\begin{equation}
{\bf P}_{\ell} (\cosh\xi) = \left(\frac{\xi}{\sinh\xi}\right)^{1/2} \bigg( I_0(u\xi) + \epsilon(u,\xi)   \bigg)
\end{equation}
where 
\begin{equation}
|\epsilon(u, \xi) | \leq \frac{\Gamma(3/2)}{2\Gamma(1/2)} \frac{2\xi}{1+\xi} \frac{I_1(u\xi)}{u}
\end{equation}
and $I_0$, $I_1$ are the modified Bessel functions (see \cite{olver} chapter 2 section 10). 

Similarly, Frenzen, in \cite{frenzen}, proved an equivalent expansion of ${\bf Q}_{\ell}$ that is more computable than Olver's; he has shown that for $\xi >0$, 

\begin{equation} \label{Q-expansion}
{\bf Q}_{\ell} (\cosh\xi) = \left(\frac{\xi}{\sinh\xi}\right)^{1/2} \bigg( K_0(u\xi) + \eta(u,\xi)   \bigg)
\end{equation}
where 
\begin{equation}
|\eta(u, \xi) | \leq \frac{\xi}{2(2+\xi)} \frac{K_1(u\xi)}{u}
\end{equation}
and $K_0$, $K_1$ are the modifed Bessel functions (see \cite{olver} chapter 7 section 8). 


\vv

Letting $\xi_{0}$ be the positive number in which $\cosh \xi_0 = R$, Olver's asymptotics of the modified Bessel function in \cite{olver} chapter 12, section 1 implies that for all $\xi\geq \xi_0$,
\begin{equation} \label{I,K}
|I_{0}(u \xi)| + |I_{-1}(u\xi)| \leq C \frac{e^{u\xi}}{2\pi \sqrt{u\xi}}, \quad |K_{0}(u\xi)| + |K_1(u\xi)| \leq C \left( \frac{\pi}{2u \xi} \right)^{1/2} e^{-u\xi}
\end{equation}

for some constant $C>0$ depending only on $R$. We can then compute for $\xi \in [\xi_0, \infty)$, 
\begin{align}
(\cosh\xi)^{-\ell} |P_{\ell}(\cosh \xi)| &= \frac{\sqrt{\pi} \Gamma(\ell+1)}{2^{\ell}\Gamma(\ell+\frac{1}{2})} (\cosh\xi)^{-\ell} |{\bf P}_{\ell}(\cosh \xi)|\\
&\leq C\frac{\sqrt{\pi} \Gamma(\ell+1)}{2^{\ell}\Gamma(\ell+\frac{1}{2})} 2^{\ell} (e^{\xi} + e^{-\xi})^{-\ell} \frac{\sqrt{2\xi}}{\sqrt{e^{\xi} - e^{-\xi}}} \frac{e^{(\ell+1/2)\xi}}{2\pi \sqrt{(\ell+1/2)}\sqrt{\xi}}\\
&\leq C \frac{\Gamma(\ell+1)}{\Gamma(\ell+\frac{1}{2})} \frac{1}{\sqrt{\ell}} \left( \frac{e^{\xi}}{e^{\xi} - e^{-\xi}}\right)^{1/2} \left( \frac{e^{\xi}}{e^{\xi} + e^{-\xi}} \right)^{\ell} \\
& \leq C \frac{\Gamma(\ell+1)}{\Gamma(\ell+\frac{1}{2})} \frac{1}{\sqrt{\ell}} \left( \frac{e^{\xi}}{e^{\xi} + e^{-\xi}} \right)^{\ell} 
\end{align}

\begin{align}
(\cosh\xi)^{\ell+1} |Q_{\ell}(\cosh \xi)| &= \frac{2^{\ell+1}\Gamma(\ell+\frac{3}{2})}{\sqrt{\pi} \Gamma(\ell+1)}(\cosh\xi)^{\ell+1} |{\bf Q}_{\ell}(\cosh \xi)|\\
&\leq C\frac{2^{\ell+1}\Gamma(\ell+\frac{3}{2})}{\sqrt{\pi} \Gamma(\ell+1)}  2^{-\ell-1}(e^{\xi} + e^{-\xi})^{\ell+1} \frac{\sqrt{2\xi}}{\sqrt{e^{\xi} - e^{-\xi}}}  \left( \frac{\pi}{2(\ell+1/2) \xi} \right)^{1/2} e^{-(\ell+1/2)\xi}\\
&\leq C \frac{\Gamma(\ell+\frac{3}{2})}{\Gamma(\ell+1)} \frac{1}{\sqrt{\ell}} \left( \frac{e^{\xi}}{e^{\xi} - e^{-\xi}}\right)^{1/2} \left( \frac{e^{\xi} + e^{-\xi}}{e^{\xi}} \right)^{\ell+1} \\
& \leq C \frac{\Gamma(\ell+\frac{3}{2})}{\Gamma(\ell+1)} \frac{1}{\sqrt{\ell}} \left( \frac{e^{\xi} + e^{-\xi}}{e^{\xi}} \right)^{\ell}
\end{align}
where we have allowed the constant $C$ to change from line to line but remains dependent only on $R$ and not on $\xi$ or $\ell$.

\vv

 In light of Stirling's formula (see \cite{olver}, chapter 3, section 8), the Gamma function enjoys the following asymptotics: 

\begin{equation}
\Gamma(x) = e^{-x} x^x (1+ \mathcal{O}(x^{-1})), \quad \text{as $x \to \infty$}
\end{equation}

In particular, we have 
\begin{align}
\frac{\Gamma(\ell+1)}{\Gamma(\ell+\frac{1}{2})} = \left( 1+ \frac{1}{2(\ell+1/2)} \right)^{\ell+1/2} \frac{\sqrt{\ell+1}}{\sqrt{e}} (1+ \mathcal{O}(\ell^{-1}))  
\end{align}
and 
\begin{align}
\frac{\Gamma(\ell+\frac{3}{2})}{\Gamma(\ell+1)} = \left( 1+ \frac{1}{2(\ell+1)} \right)^{\ell+1} \frac{\sqrt{\ell+\frac{3}{2}}}{\sqrt{e}} (1+ \mathcal{O}(\ell^{-1}))  
\end{align}
as $\ell \to \infty$. It then follows that $\frac{\Gamma(\ell+1)}{\Gamma(\ell+\frac{1}{2})} \frac{1}{\sqrt{\ell}}$ and $\frac{\Gamma(\ell+\frac{3}{2})}{\Gamma(\ell+1)} \frac{1}{\sqrt{\ell}}$ are bounded for $\ell \geq 1$, and we finally conclude that there exists a constant $C$ depending only on $R$ such that for any $z\geq R$, 

\begin{equation}
z^{-\ell} P_{\ell}(z) \leq C \left(\frac{2z}{z+\sqrt{z^2-1}} \right)^{-\ell}, \quad z^{\ell+1} Q_{\ell}(z) \leq C\left(\frac{2z}{z+\sqrt{z^2-1}} \right)^{\ell}
\end{equation}
where we used 
\begin{equation}
 \frac{e^{\xi} + e^{-\xi}}{e^{\xi}} = \frac{2z}{z+\sqrt{z^2-1}}
\end{equation}
for $z=\cosh \xi$. 


We have then shown equation \eqref{estPQa}. The estimate for $P_{\ell}'$ and $Q_{\ell}'$ in equation \eqref{estPQ'a} follows immediately from equation \eqref{estPQa} and the recurrence relations (see \cite{bateman} pg 161 and \cite{watson} pg 318)

\begin{equation} \label{recursive}
(z^2-1) P_{\ell}'(z) = \ell( z P_{\ell}(z) - P_{\ell-1}(z) ), \quad (z^2-1) Q_{\ell}'(z) = \ell( z Q_{\ell}(z) - Q_{\ell-1}(z) )
\end{equation} 
\end{proof}

\bibliographystyle{amsplain}
\bibliography{refs}

\section*{Data Availability Statement}
Data sharing is not applicable to this article as no datasets were generated or analyzed in this paper.

\section*{Declaration} 

\textbf{Conflict of Interest:} The authors have no Conflict of interest to declare that are relevant to the content of this
article.
\end{document}